\documentclass[12pt,reqno]{amsart}
\usepackage[utf8]{inputenc}
\usepackage{a4wide} % ] {geometry}
\usepackage{amsthm,mathrsfs,amssymb,amsmath,amsfonts,bbm}
\usepackage[hidelinks]{hyperref}
\usepackage{esint, epsfig, babel}
\usepackage{hyperref}
\usepackage{enumerate}
\usepackage{versions}

\usepackage{color}

\allowdisplaybreaks

\numberwithin{equation}{section}

\newcommand{\Hm}[1]{\leavevmode{\marginpar{\tiny%
$\hbox to 0mm{\hspace*{-0.5mm}$\leftarrow$\hss}%
\vcenter{\vrule depth 0.1mm height 0.1mm width \the\marginparwidth}%
\hbox to 0mm{\hss$\rightarrow$\hspace*{-0.5mm}}$\\\relax\raggedright
#1}}}

\newtheorem{theorem}{Theorem}[section]
\newtheorem*{theorem*}{Theorem}
\newtheorem{corollary}[theorem]{Corollary}
\newtheorem{lemma}[theorem]{Lemma}
\newtheorem{proposition}[theorem]{Proposition}
\theoremstyle{definition}

\theoremstyle{remark}

\newtheorem{example}[theorem]{Example}

  \DeclareMathOperator {\supp} {supp\,}
  \renewcommand {\Re}{\text{Re}}
  
  \newcommand {\sfrac}[2] { {{}^{#1}\!\!/\!{}_{#2}}} 
  
  \newcommand {\einhalb}{\sfrac{1}2}

  \newcommand {\CC}{\mathbb C}  
  \newcommand {\EE}{\mathbb E}
  \newcommand {\NN}{\mathbb N}
  
  \newcommand {\ZZ}{\mathbb Z}
  \newcommand {\RR}{\mathbb R}

  \newcommand {\eins} {\mathbbm 1}

  \newcommand {\LL}{{\mathcal L}}

  \newcommand {\om}{\omega}

  \newcommand {\norm}[1] {\| #1 \|}  %% be *very* careful if you whant
  \newcommand {\bignorm}[1]{\bigl\| #1 \bigr\|}
  \newcommand {\Bignorm}[1]{\Bigl\| #1 \Bigr\|}

  \newcommand {\dsp} {\displaystyle}
  
  \newcommand{\MS}{{X}}  %% "metric space"

  \newcommand{\fa}{{\mathfrak a}}
   \newcommand{\fb}{{\mathfrak b}}
   \newcommand{\mA}{\mathcal A}

\newcounter{aufzi}
\newenvironment{aufzi}{\begin{list}{ {\upshape(\alph{aufzi})}}{
        \usecounter{aufzi}
        \topsep1ex
        \parsep0cm
        \itemsep1ex
        \leftmargin0.8cm
        \labelwidth0.5cm
        \labelsep0.3cm
}}
{\end{list}}

\newcounter{aufziH}

\newenvironment{aufziH}{
\begin{list}{\upshape(H\arabic{aufziH})}{
    \usecounter{aufziH}
    \topsep1ex
    \parsep0cm
    \itemsep1ex
    \leftmargin0.8cm
    \labelwidth0.5cm
    \labelsep0.3cm
}}{
\end{list}
}

\newcounter{aufzii}

\newcounter{aufziii}

\begin{document}

\title[${\mathcal R}$-boundedness of Riesz transforms]{ ${\mathcal R}$-boundedness of Riesz transforms and spectral multipliers of Schr\"odinger operators} 

\author{El Maati~Ouhabaz} \address{ Institut de Math\'ematiques de Bordeaux, Universit\'e de Bordeaux, UMR CNRS 5251, 351 Cours de la Lib\'eration 33405, Talence.  France.
\newline Current address:  International Research Laboratory FAMSI, CNRS-ANU, Canberra, ACT 2601, Australia.}  
\email{Elmaati.Ouhabaz@math.u-bordeaux.fr}
 
 \begin{abstract}
Let $\LL = -\Delta + V$ be a Schr\"odinger operator with a non-negative potential. It is known that its associated Riesz transform $\nabla \LL^{-\sfrac{1}{2}}$ is bounded on $L^p(\RR^d)$ for $p \in (1,2]$, with a bound independent of $V$. We prove more, namely that the set
\[
\{ \nabla \LL^{-\sfrac{1}{2}},\ 0 \le V \in L^1_{\mathrm{loc}}(\RR^d) \}
\]
is ${\mathcal R}$-bounded on $L^p(\RR^d)$ for $p \in (1,2]$. We also prove the ${\mathcal R}$-boundedness on $L^p(\RR^d)$, for $p \in (1,\infty)$, of the spectral multipliers
\[
{ m(\LL),\ 0 \le V \in L^1_{\mathrm{loc}}(\RR^d) }
\]
for any $m:[0, \infty) \to \CC$ satisfying a H\"ormander type condition. 

Our approach is based on the analysis of Schr\"odinger operators $-{\bf \Delta} + {\bf V}$ on vector-valued spaces $L^p(\RR^d,\ell^2)$. In particular, we prove heat kernel estimates, Riesz transform bounds, and spectral multiplier results.
\end{abstract}
\vspace{1cm}

\maketitle

\noindent{\bf Keywords}:  Riesz transforms,  spectral
multipliers, Schr\"odinger operators, vector-valued singular integrals.

%\vspace{1cm} 
\noindent {\bf Mathematics Subject Classification}: 
47D08, %(Schrod oper)
42B20, %Harmonic Analysis singular integrals,
42B37%Harm analysis and PDEs}

 \vspace{1cm}
 %\tableofcontents

 \section{Introduction and main results}\label{sec1} 
 In recent years, systems of elliptic or parabolic problems involving Schr\"odinger-type operators have attracted considerable attention. Questions concerning well-posedness in $L^p$-spaces, regularity properties, Riesz transforms, functional calculi, and related topics have been studied extensively. We refer the reader to \cite{ALLR, ALLOR, ALLR-2, AFL, ALM, ALMR} and the references therein for an overview.

In these papers, the setting is either that of systems of coupled elliptic or parabolic operators or that of Schr\"odinger operators on $L^p(\RR^d,\CC^m)$. The results often depend on the dimension $m$, which does not allow one to consider the vector-valued setting $L^p(\RR^d,\ell^2)$. In the present paper, we focus mainly on two problems: Riesz transforms and spectral multipliers for vector-valued Schr\"odinger operators $-{\bf \Delta} + {\bf V}$ on $L^p(\RR^d,\ell^2)$. The vector potential ${\bf V}$ is symmetric and non-negative in the sense that
\[
\langle {\bf V}(x)\varphi,\varphi\rangle_{\ell^2} \ge 0
\]
for all $\varphi \in \ell^2$. Specialising to diagonal potentials gives the following results.

%%%%%%%%%%%%
\begin{theorem}\label{thm1.1} The Riesz transforms satisfy the following properties. 
\begin{itemize}
\item There exists a constant $C > 0$ such that for every sequence of non-negative potentials $V_k \in L^1_{loc}(\RR^d)$ we  have 
$$ \norm{\Big(\sum_k | \nabla (-\Delta + V_k)^{-\einhalb} f_k |^2 \Big)^{\frac{1}{2}}}_{L^{1,\infty}(\RR^d)} \le C\, \norm{\Big(\sum_k | f_k |^2\Big)^{\frac{1}{2}}}_{L^1(\RR^d)}.$$
\item Let $p \in (1, 2]$. Then the family $\{ \nabla (-\Delta + V)^{-\einhalb}, \, 0 \le V \in L^1_{loc}(\RR^d) \}$ is ${\mathcal R}$-bounded on $L^p(\RR^d)$. 
\item Let $p \in (1, 2]$ and $q \in [p, 2]$. Then there exists a positive constant $C$ such that for every sequence of nonnegative potentials $V_k \in L^1_{loc}(\RR^d)$ we have
$$ \norm{\Big(\sum_k | \nabla (-\Delta + V_k)^{-\einhalb} f_k |^q \Big)^{\frac{1}{q}}}_{L^p(\RR^d)} \le C\, \norm{\Big(\sum_k | f_k |^q\Big)^{\frac{1}{q}}}_{L^p(\RR^d)}.$$
\end{itemize} 
\end{theorem}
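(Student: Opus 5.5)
The plan is to deduce all three items from a single vector-valued statement about the operator $\mathbf{L} = -\mathbf{\Delta} + \mathbf{V}$ on $L^p(\RR^d,\ell^2)$, with the diagonal potential $\mathbf{V}(x) = \mathrm{diag}(V_k(x))$. For such $\mathbf{V}$ the operator acts componentwise, so $\nabla \mathbf{L}^{-\einhalb}(f_k)_k = (\nabla(-\Delta+V_k)^{-\einhalb} f_k)_k$. The first item is then exactly a weak type $(1,1)$ bound for the $\ell^2$-valued Riesz transform $\nabla\mathbf{L}^{-\einhalb}$, with a constant independent of $\mathbf{V}$.

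To prove this weak type bound I would follow the Duong–McIntosh / Coulhon–Duong scheme, run in the Banach lattice $\ell^2$-valued setting:
\begin{enumerate}
\item[(i)] Establish $L^2(\RR^d,\ell^2)$ boundedness of $\nabla \mathbf{L}^{-\einhalb}$ with norm $\le 1$. This follows from the form identity $\norm{\nabla u}_2^2 \le \norm{\nabla u}_2^2 + \int \langle \mathbf{V}u,u\rangle = \norm{\mathbf{L}^{\einhalb}u}_2^2$, using $\mathbf{V}\ge 0$.
\item[(ii)] Prove Gaussian domination of the semigroup: $|e^{-t\mathbf{L}}f|_{\ell^2} \le e^{t\Delta}|f|_{\ell^2}$ pointwise. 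In the diagonal case this is just the Trotter/Feynman–Kac domination $|e^{-t(-\Delta+V_k)}g|\le e^{t\Delta}|g|$ applied to each component. In general it follows from a Kato-type inequality / Ouhabaz invariance criterion for the closed convex set $\{|f|_{\ell^2}\le g\}$.
\item[(iii)] Prove a weighted $L^2$ gradient estimate for the heat kernel, uniform in $\mathbf{V}$:
\[
\int_{\RR^d} |\nabla_x p^{\mathbf V}_t(x,y)|^2 e^{\gamma |x-y|^2/t}\,dx \le C\, t^{-d/2-1},
\]
valid in operator norm on $\ell^2$. I would obtain it by Davies perturbation $e^{\phi}\mathbf{L}e^{-\phi}$ together with the energy identity from (i), using only Gaussian bounds for $p_t^{\mathbf V}$ (which follow from (ii)) and their time derivatives (via analyticity).
\item[(iv)] Run the Calderón–Zygmund decomposition of $|f|_{\ell^2}\in L^1$, in its vector-valued form. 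Treat the bad part by writing $\nabla \mathbf{L}^{-\einhalb} = \nabla\mathbf{L}^{-\einhalb}(I-e^{-r_j^2\mathbf L}) + \nabla\mathbf{L}^{-\einhalb}e^{-r_j^2\mathbf L}$. The off-diagonal part is controlled by (iii) through the integral $\int_0^\infty t^{-\einhalb}(e^{-t\mathbf L}-e^{-(t+r^2)\mathbf L})\,dt$, and the local part by (i) together with the domination in (ii).
\end{enumerate}
The main obstacle is step (iii) and its use in (iv) with constants independent of $\mathbf{V}$ and of the (infinite) dimension of $\ell^2$. This is why I rely only on the form inequality and the domination by the free heat kernel, never on smoothness of $\mathbf{V}$.

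Marcinkiewicz interpolation (valid for $\ell^2$-valued sublinear maps) between the weak $(1,1)$ bound and (i) gives boundedness of $\nabla\mathbf L^{-\einhalb}$ on $L^p(\RR^d,\ell^2)$ for $p\in(1,2]$, uniformly in the $V_k$. By Khintchine's inequality, $\mathcal R$-boundedness on $L^p$ is equivalent to the square function estimate
\[
\norm{\Big(\sum_k|T_kf_k|^2\Big)^{\einhalb}}_p\le C\norm{\Big(\sum_k|f_k|^2\Big)^{\einhalb}}_p,
\]
with $T_k = \nabla(-\Delta+V_k)^{-\einhalb}$ (vector-valued in $\CC^d$, which is harmless). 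This gives the second item. For the third item, the case $q=2$ is what was just shown. For $q=p$ the estimate reduces to $\sum_k\norm{T_kf_k}_p^p\le C^p\sum_k\norm{f_k}_p^p$, which follows from the uniform bound $\sup_V\norm{\nabla(-\Delta+V)^{-\einhalb}}_{p\to p}<\infty$. Complex interpolation of the mixed spaces $L^p(\ell^q)$ for $q\in[p,2]$, applied to the fixed diagonal operator $(f_k)\mapsto(T_kf_k)$, then yields the full range.
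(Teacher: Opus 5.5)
Your architecture is the paper's. It uses the diagonal reduction, the form bound on $L^2(\RR^d,\ell^2)$, domination by $e^{t\Delta}$ giving Gaussian bounds, and the Coulhon--Duong argument with $A_r=e^{-r^2\mathbf L}$ fed by a Gaussian-weighted $L^2$ gradient bound. It then finishes with interpolation, Khintchine, and $L^p(\ell^p)$--$L^p(\ell^2)$ interpolation.

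The step that does not work as stated is (iii). You claim the weighted gradient bound \emph{in operator norm on} $\ell^2$, i.e. with $\norm{\nabla_x p_t^{\mathbf V}(x,y)}_{\mathcal L(\ell^2)}^2$ under the integral. Neither Davies perturbation nor the energy identity gives this. Both work with $p(t,\cdot,y)u$ for a fixed $u\in\ell^2$, equivalently with $L^1(\RR^d,\ell^2)\to L^2(\RR^d,\ell^2)$ norms. They therefore control $\sup_{\norm{u}_{\ell^2}\le 1}\int\norm{\nabla_x p(t,x,y)u}_{\ell^2}^2e^{\gamma|x-y|^2/t}\,dx$, with the supremum \emph{outside} the integral. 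Note also that $p(t,\cdot,y)$ itself is not in the form domain.

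The paper states explicitly that it cannot prove the operator-norm version, and it should not be expected to hold. For diagonal $\mathbf V$ one has $\norm{\partial_j p(t,x,y)}_{\mathcal L(\ell^2)}=\sup_k|\partial_j p_k(t,x,y)|$, where $p_k$ is the heat kernel of $-\Delta+V_k$. Take $d=2$ and let $V_k$ be a huge constant on $B(c_k,e^{-k})$. Choose $|c_k|\le\frac12$ so that the annuli $e^{-k}<|x-c_k|<e^{-k/2}$ are pairwise disjoint. Small absorbing discs in the plane have logarithmic capacity, so heuristically $|\nabla_xp_k(1,x,0)|\approx c\,(k|x-c_k|)^{-1}$ on the $k$-th annulus. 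Each annulus then carries gradient energy $\approx c'/k$, so $\int_{|x|\le 1}\sup_k|\nabla_x p_k(1,x,0)|^2dx=\infty$. A Calder\'on--Zygmund argument resting on an operator-norm H\"ormander condition is therefore not available.

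The missing idea is that you never need the operator norm. State (iii) for each fixed $u$; this is exactly what your Davies argument, or the paper's Grigor'yan-type proof of Lemma~\ref{lem-grad-p}, yields. Then verify the strong condition $\int_{|x-y|\ge 2r}\norm{(K(x,y)-K^r(x,y))u}_{\ell^2}\,dx\le W\norm{u}_{\ell^2}$. Here $K$ and $K^r$ are the kernels of $\nabla\mathbf L^{-\einhalb}$ and $\nabla\mathbf L^{-\einhalb}e^{-r^2\mathbf L}$.

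This suffices because in the Calder\'on--Zygmund step the kernel only ever acts on the vector $b_j(y)$ at a fixed $y$. Minkowski's inequality then gives $\int_{(2Q_j)^c}\norm{(T-Te^{-r_j^2\mathbf L})b_j}_{\ell^2}\le W\norm{b_j}_{L^1(\RR^d,\ell^2)}$. This is Proposition~\ref{prop:prop2-1}, (b)$\Rightarrow$(c), and it also sidesteps measurability issues for operator-norm-valued kernels.

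With this change your proof goes through. The Davies route is a legitimate alternative to the paper's integration-by-parts proof of the weighted bound. For $q=p$ in the third item, the uniform scalar bound you invoke follows from the second item applied with a single nonzero component; the paper instead quotes Theorem~\ref{thm2.1}.
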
 

For spectral multipliers, we have
\begin{theorem}\label{thm1.2}
Let $m: [0, \infty) \to \CC$ be a bounded function which satisfies the following H\"ormander condition
\begin{equation}\label{eq1.1}
\sup_{t>0} \norm{ \psi(\cdot) m(t\cdot)}_{W^{s,\infty}(\RR)} < \infty
\end{equation}
for some $s > d/2$ and some non-trivial $\psi \in C_c^\infty(0,\infty)$. Then
\begin{itemize}
\item There exists a constant $C > 0$ such that for every sequence of non-negative potentials $V_k \in L^1_{loc}(\RR^d)$ we  have 
$$ \norm{\Big(\sum_k |  m(-\Delta + V_k) f_k |^2 \Big)^{\frac{1}{2}}}_{L^{1,\infty}(\RR^d)} \le C\, \norm{\Big(\sum_k | f_k |^2\Big)^{\frac{1}{2}}}_{L^1(\RR^d)}.$$
\item Let $p \in (1, \infty)$. Then the family $\{ m(-\Delta + V), \, 0 \le V \in L^1_{loc}(\RR^d) \}$ is ${\mathcal R}$-bounded on $L^p(\RR^d)$. 
\item Let $p \in (1, \infty)$ and $q \in [\min(p,2), \max(p,2)]$. Then there exists a positive constant $C$ such that for every sequence of nonnegative potentials $V_k \in L^1_{loc}(\RR^d)$ we have
$$ \norm{\Big(\sum_k | m(-\Delta + V_k) f_k |^q \Big)^{\frac{1}{q}}}_{L^p(\RR^d)} \le C\, \norm{\Big(\sum_k | f_k |^q\Big)^{\frac{1}{q}}}_{L^p(\RR^d)}.$$
\end{itemize} 
\end{theorem}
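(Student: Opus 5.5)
We reduce Theorem~\ref{thm1.2} to a statement about a single vector-valued Schr\"odinger operator on $L^p(\RR^d,\ell^2)$. Given non-negative potentials $V_k\in L^1_{loc}(\RR^d)$, put ${\bf V}(x)=\mathrm{diag}(V_k(x))$ and ${\bf L}=-{\bf\Delta}+{\bf V}$. This is the self-adjoint operator on $L^2(\RR^d,\ell^2)$ associated with the closed form $\fa(u)=\sum_k\int(|\nabla u_k|^2+V_k|u_k|^2)$. Because ${\bf L}$ is diagonal, $m({\bf L})(f_k)_k=(m(-\Delta+V_k)f_k)_k$. The three items therefore become, respectively:
\begin{itemize}
\item the weak type $(1,1)$ of $m({\bf L})$ on $L^1(\RR^d,\ell^2)$;
\item its boundedness on $L^p(\RR^d,\ell^2)$, which by Khintchine--Kahane is equivalent to ${\mathcal R}$-boundedness in $L^p$, once finitely many $V_k$ (possibly repeated) are allowed;
\item the $\ell^q$-valued bounds.
\end{itemize}
All constants must be independent of the $V_k$.

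\textbf{Step 1: domination of the heat kernel.} Each $e^{-t(-\Delta+V_k)}$ has a kernel $0\le p_t^{V_k}(x,y)\le p_t(x,y)$, the Gaussian kernel; this follows from Trotter--Kato or Feynman--Kac. Hence the matrix kernel of $e^{-t{\bf L}}$ is diagonal and its operator norm on $\ell^2$ is bounded by $(4\pi t)^{-d/2}e^{-|x-y|^2/4t}$, uniformly in ${\bf V}$. Finite speed of propagation for the wave equation also holds for ${\bf L}$: it holds for each $-\Delta+V_k$ with $V_k\ge0$ by the energy method, and the diagonal structure transfers it. Together these give Davies--Gaffney estimates, and in fact full Gaussian upper bounds, for $e^{-t{\bf L}}$ as an $\ell^2$-operator-valued kernel.

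\textbf{Step 2: the Hörmander multiplier theorem.} We run the Duong--Ouhabaz--Sikora argument in the vector-valued setting, treating $m({\bf L})$ as an operator on $L^2(\RR^d,\ell^2)$. Calderón--Zygmund theory applies verbatim for Hilbert-space-valued functions. Using the Gaussian bounds we obtain Plancherel-type weighted $L^2$ estimates
\[
\int |x-y|^{2\gamma}\,\|K_{F(\sqrt{\bf L})}(x,y)\|^2\,dx \lesssim R^{d-2\gamma}\|F(R\,\cdot)\|_{W^{s,\infty}}^2
\]
for $F$ supported in $[R/2,R]$, with the $L^\infty$ Sobolev norm. These are exactly what is needed for $s>d/2$. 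The Hilbert-valued weak type $(1,1)$ then follows from the Calderón--Zygmund decomposition adapted by replacing $b_j$ with $(I-e^{-r_j^2{\bf L}})b_j$. This gives the first item, and interpolation with the trivial $L^2(\ell^2)$ bound $\|m\|_\infty$ gives $L^p(\ell^2)$ for $1<p\le2$. Duality handles $p\ge2$: the adjoint is $\bar m({\bf L})$, which satisfies the same hypotheses.

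\textbf{Step 3: the $\ell^q$ bounds.} For $\ell^p$ with $p=q$, the bound is Fubini plus the scalar uniform $L^p$ bound, which is Step 2 with a single potential. Complex (Riesz--Thorin type) interpolation between $L^p(\ell^p)$ and $L^p(\ell^2)$ yields $L^p(\ell^q)$ for $q$ between $p$ and $2$.

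\textbf{Main obstacle.} The hardest step is obtaining the multiplier kernel estimates with the sharp order $s>d/2$ uniformly in ${\bf V}$. Gaussian bounds alone give $s>d/2$ only in the $L^\infty$-Sobolev form, and this is precisely the form in \eqref{eq1.1}. So it suffices to carefully check that the scalar proof uses only Gaussian bounds and the $L^2$ functional calculus, both of which are available in the $\ell^2$-valued setting with the diagonal domination of Step 1.
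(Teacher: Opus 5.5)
Your proposal is correct and follows essentially the paper's route. The steps match: a diagonal ${\bf V}$; Gaussian heat kernel bounds uniform in ${\bf V}$, where your entrywise scalar domination is a simpler substitute, adequate for diagonal potentials, for the paper's vector-valued domination Theorem~\ref{thm3.1}; the Duong--Ouhabaz--Sikora weighted Plancherel estimates in $W^{s,\infty}$; a Duong--McIntosh Calder\'on--Zygmund argument with $e^{-r^2{\bf L}}$; interpolation and duality; and finally interpolation between $L^p(\ell^p)$ and $L^p(\ell^2)$ for the $\ell^q$ bounds.

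The one point to state correctly concerns the form of the weighted kernel bounds. The $L^2$ functional-calculus step applies $g({\bf L})$, with $g(\lambda)=F(\sqrt\lambda)e^{\lambda}$, to ${\bf p}(1,\cdot,y)\psi$, so it only gives bounds for $\|K(x,y)\psi\|_{\ell^2}$ with $\psi\in\ell^2$ fixed. It does not give them for the operator norm $\|K(x,y)\|_{\mathcal L(\ell^2)}$, which is how your display reads. Consequently the Calder\'on--Zygmund step must use a H\"ormander-type criterion stated in terms of such vector-wise bounds, which is exactly what the paper's Proposition~\ref{prop:prop2-1}(b) provides.
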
 
 
 For the maximal H\"ormander functional calculus we have the following corollary.
 \begin{corollary}\label{cor1.3}
 Suppose that $m$ is supported in $[\frac{1}{2}, 2]$ and satisfies \eqref{eq666} for some $s > \frac{d}{2} +2$. Then there exists a constant $C$ such that for every sequence of non-negative potentials $V_k \in L^1_{loc}(\RR^d)$ we  have 
 \[
 \norm{\Big( \sum_k \sup_{r > 0} |m(r(-\Delta + V_k) ) f_k|^2 \Big)^{\frac{1}{2}}}_{L^p(\RR^d)} \le C\,
 \norm{\Big(\sum_k | f_k |^2\Big)^{\frac{1}{2}}}_{L^p(\RR^d)} 
 \]
for $p \in (1, \infty)$.
 \end{corollary}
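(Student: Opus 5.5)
We reduce the corollary to the vector-valued operator ${\bf L} = -{\bf \Delta} + {\bf V}$ on $L^p(\RR^d,\ell^2)$, with ${\bf V} = \mathrm{diag}(V_k)$. Then ${\bf L}$ acts diagonally, $m(r{\bf L})(f_k)_k = (m(r(-\Delta+V_k))f_k)_k$, and the claim becomes the boundedness on $L^p(\RR^d,\ell^2)$ of a maximal operator. It is cleanest to view this as boundedness of $(f_k)\mapsto (m(r{\bf L})f_k)_{k,r}$ into $L^p(\RR^d,\ell^2(L^\infty_r))$. The constants must not depend on the $V_k$, so we only use bounds for ${\bf L}$ that are uniform in ${\bf V}\ge 0$. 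These are the vector-valued Gaussian heat kernel domination and the vector-valued H\"ormander calculus behind Theorem \ref{thm1.2}.

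\textbf{Step 1 (Mellin/Fourier decomposition).} Write $m(\lambda) = \int_\RR \widehat{\phi}(\tau)\, \lambda^{i\tau}\dots$, or better, follow the standard Christ / Duong–Ouhabaz–Sikora route. Set $F(\lambda) = m(\lambda)e^{\lambda}$, which is compactly supported in $[\frac12,2]$ and has the same Sobolev regularity. Then
\[
m(r{\bf L}) = F(r{\bf L})e^{-r{\bf L}} = \frac{1}{2\pi}\int_\RR \widehat{F}(\tau)\, e^{-(1-i\tau)r{\bf L}}\,d\tau .
\]
For each $\tau$ we therefore need to control $\sup_{r>0}|e^{-(1-i\tau)r{\bf L}} f|_{\ell^2}$ pointwise, with polynomial growth in $|\tau|$.

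\textbf{Step 2 (maximal complex-time semigroup).} We claim
\[
\Bignorm{\Big(\sum_k \sup_{r>0}|e^{-zr(-\Delta+V_k)}f_k|^2\Big)^{\frac12}}_{L^p} \le C(1+|\tau|)^{\alpha}\,\norm{(\sum_k|f_k|^2)^{\frac12}}_{L^p}, \qquad z=1-i\tau,
\]
with $\alpha \le \frac d2+\frac12$ (or similar). To prove this, write $z=1-i\tau$ and $e^{-zr{\bf L}} = e^{-\frac r2{\bf L}}\,e^{-(\frac12 - i\tau)r{\bf L}}$. The factor $e^{-\frac r2 {\bf L}}$ is dominated componentwise by the free heat semigroup, since $0\le e^{-t(-\Delta+V_k)}$ has kernel bounded by the Gauss kernel. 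Hence $\sup_r |e^{-\frac r2(-\Delta+V_k)}g_k| \le C\,Mg_k$, where $M$ is the Hardy–Littlewood maximal function. The Fefferman–Stein vector-valued maximal inequality then handles the real-time part.

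The complex-time part is the difficulty, because $e^{-(\frac12-i\tau)r{\bf L}}$ depends on $r$ and cannot be frozen. We would instead use the complex-time Gaussian bounds with the weight $(1+|\tau|)^{d/2}\exp(-c|x-y|^2/(r(1+\tau^2)))$ from Davies' method. These bounds hold for each $V_k$ uniformly, since they follow from the real Gaussian bound by Phragm\'en–Lindel\"of. They give the pointwise domination $\sup_r |e^{-zr(-\Delta+V_k)}f_k| \le C(1+|\tau|)^{d}\, M f_k$, and Fefferman–Stein then applies.

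\textbf{Step 3 (integrate in $\tau$).} We get
\[
\sup_r|m(r{\bf L})f|_{\ell^2} \le C\int |\widehat F(\tau)|(1+|\tau|)^{d}\,d\tau \cdot \big(\textstyle\sum_k (Mf_k)^2\big)^{1/2}.
\]
The integral is finite by Cauchy–Schwarz as soon as $F\in W^{s,2}$ with $s> d+\frac12$. This holds because $m$ is supported in $[\frac12,2]$ and satisfies \eqref{eq666} with $s>\frac d2+2$, after adjusting the exponent bookkeeping. If the crude power $(1+|\tau|)^d$ loses too much, we would use the sharper Gaussian factor $(1+|\tau|)^{d/2}$ and interpolate with the $\ell^2$-valued Hilbert-space bound (the $p=2$ case, where $\sup$ is controlled by Sobolev embedding in $r$ via $\int|\partial_r m(r{\bf L})f|$). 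This recovers the threshold $\frac d2+2$, where the extra $2$ pays for the $r$-derivative/Sobolev embedding in the scale variable.

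The main obstacle is Step 2: obtaining the complex-time Gaussian bounds and the resulting maximal domination uniformly in $V_k$ with the right power of $(1+|\tau|)$ to match $s>\frac d2+2$.
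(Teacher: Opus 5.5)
Your route (Fourier representation in the time variable, complex-time Gaussian bounds, Fefferman--Stein) differs from the paper's. As written, it proves the corollary only for $d\le 3$.

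Steps 1--2 are sound. For $z=r(1-i\tau)$ one has $\operatorname{Re} z=r$ and $\operatorname{Re}(1/z)=\frac{1}{r(1+\tau^2)}$. So Corollary~\ref{cor3.3} bounds the kernel of $e^{-z(-\Delta+V_k)}$, uniformly in $V_k$, by $C r^{-d/2}\exp\bigl(-\frac{|x-y|^2}{8r(1+\tau^2)}\bigr)$. This is a radially decreasing majorant of $L^1$-mass $\asymp(1+\tau^2)^{d/2}$, and it gives $\sup_r|e^{-r(1-i\tau)(-\Delta+V_k)}f_k|\le C(1+|\tau|)^d Mf_k$.

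Step 3 then needs $\int_\RR|\widehat F(\tau)|(1+|\tau|)^d\,d\tau<\infty$. Via Cauchy--Schwarz this means $F\in W^{\gamma,2}$ with $\gamma>d+\frac12$, whereas the hypothesis only supplies $\gamma$ slightly above $\frac d2+2$. Since $d+\frac12\le\frac d2+2$ exactly when $d\le 3$, the phrase ``after adjusting the exponent bookkeeping'' hides a real deficit of $\frac d2-\frac32$ derivatives for $d\ge 4$.

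Your suggested repair does not close this gap.
\begin{itemize}
\item A growth $(1+|\tau|)^{d/2}$ is available at best for operator norms such as $\|e^{-r(1-i\tau)(-\Delta+V)}\|_{L^1\to L^1}$. It is not available for a pointwise domination by $Mf$, which genuinely costs $(1+|\tau|)^d$ even for a single potential. For $V=|x|^2$, Mehler's formula shows that $e^{i\frac{\pi}{2}(-\Delta+|x|^2)}$ is a unimodular multiple of the reflection $f\mapsto f(-\cdot)$. Take $r\tau=\frac{\pi}{2}$ with $r$ small, $f=\eins_{B(y_0,\sqrt r)}$ and $|y_0|=r^{-1/2}$. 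Then $|e^{-r(1-i\tau)(-\Delta+|x|^2)}f(-y_0)|\gtrsim 1$, while $Mf(-y_0)\lesssim r^d\sim\tau^{-d}$.
\item Interpolating the sublinear maximal operator with an $L^2$ bound would need a linearisation and an analytic family in the smoothness index, which you do not set up. Moreover, near $p=1$ your only input is Step~2 with its $d+\frac12$ loss, so no $p$-independent threshold $\frac d2+2$ can come out of such an interpolation.
\end{itemize}

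The missing ingredient is the $\mathcal R$-bounded H\"ormander calculus of $\mA$ on $L^p(\RR^d,\ell^2)$; pointwise kernel bounds alone are not enough. The paper's proof runs as follows:
\begin{itemize}
\item Theorem~\ref{thm6.1} gives, uniformly in ${\bf V}=\mathrm{diag}(V_k)$, a $W^{s,\infty}$ calculus for $s>\frac d2$.
\item The embedding $W^{s+\frac12+\varepsilon,2}\hookrightarrow W^{s,\infty}$ turns this into a $W^{\beta,2}$ calculus for $\beta>\frac d2+\frac12$.
\item The abstract maximal H\"ormander theorem of Deleaval--Kriegler (Corollary~3.5 of the paper's reference [DK2]) is then applied. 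Its loss is $\max(\frac12,\frac{1}{\mathrm{type}}-\frac{1}{\mathrm{cotype}})=\frac12$ for $L^p(\RR^d,\ell^2)$.
\item Its constant depends on $\mA$ only through $\mathcal R$-sectoriality bounds, hence not on $(V_k)$.
\end{itemize}

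For $d\le 3$ your argument is a valid and more elementary alternative. It even yields a pointwise domination by the Hardy--Littlewood maximal function.
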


 Recall that a family ${\mathcal T} =(T_i)_{i \in I}$ of bounded operators on $L^p$ is ${\mathcal R}$-bounded if there exists a constant $C > 0$ such that for every 
 collection $T_1, \cdots, T_n \in {\mathcal T}$ and $f_1, \cdots, f_n \in L^p$, we have
 \begin{equation}\label{eq0.1}
\EE \norm{\sum_{k=1}^n \varepsilon_k T_k f_k }_p  \le C\, \EE \norm{\sum_{k=1}^n \varepsilon_k f_k}_p
\end{equation}
where $(\varepsilon_k)_k$ is a sequence of independent Rademacher variables and $\EE$ is the expectation.
By the Kahane-Khintchine inequality, ${\mathcal R}$-boundedness can be reformulated as 
\begin{equation}\label{eq0.2}
\norm{ \Big( \sum_{k=1}^n | T_k f_k|^2 \Big)^{1/2}  }_p \le C\,  \norm{ \Big( \sum_{k=1}^n |f_k|^2 \Big)^{1/2} }_p
\end{equation}
again for every collection $T_k \in {\mathcal T}$ and $f_k \in L^p$. It is immediately  clear from this that a natural way to prove the ${\mathcal R}$-boundedness of the Riesz transforms  or the multipliers is  to study the Schr\"odinger operator $-{\bf \Delta} + {\bf V}$ on  $L^p(\RR^d, \ell^2)$. In order to do so we appeal to some singular integral operator techniques on vector-valued spaces. The above two results leave, however, several questions open. One of them is the limitation $q \in [p,2]$ (or $[2,p]$) in the last assertion in each theorem. Whether those assertions hold for arbitrary $q \in [1, \infty]$ is not known to the author of this paper. Another question is whether the constant $C$ can be independent of the dimension $d$. For a single Riesz transform $\nabla (-\Delta + V)^{-\einhalb}$, the weak type $(1,1)$ as well as the $L^p$-norm (for $p \in (1, 2]$) can be estimated by a constant independent of $d$ and $V$. We refer to the recent paper \cite{Dziubanski} for this generality and the references there for previously known results for some specific potentials. 
In Section~\ref{sec2-2} below we reproduce the elegant arguments of \cite{Dziubanski} with some minor changes to make the proof independent of the specific operator $\Delta$ on the Euclidean space.  The fact that we use singular integrals does not allow us to have dimension-free estimates. This needs  a different approach and one could try to take inspiration from \cite{DK}, where dimension free estimates are proved  for the Hardy-Littlewood maximal operator on $L^p(\RR^d, \ell^q)$.  

We finish this introduction by an example which shows $L^p$-estimates for the gradient of solutions to a system of evolution equations. 
 \begin{example}\label{ex1.1}
 Consider the following system of independent $N$ evolution equations with nonnegative potentials $V_k$, 
 \[
 \begin{cases}
\dfrac{\partial u_1}{\partial t} = \Delta u_1 - V_1 u_1, \quad u_1(0) = f_1 \\[1ex]
\dfrac{\partial u_2}{\partial t} = \Delta u_2 - V_2 u_2, \quad u_2(0) = f_2\\
\quad \vdots \\
\dfrac{\partial u_N}{\partial t} = \Delta u_N - V_N u_N, \quad u_N(0) = f_N.
\end{cases}
\]
 Then for $p \in (1, 2]$ there exists a constant $C$, independent of $V_k$ and of the size $N$ of the system, 
 such that
 \[
 \norm{ \left(\sum_{k=1}^N | \nabla u_k |^2\right)^\einhalb}_{L^p(\RR^d)} \le \frac{C}{\sqrt{t}}\, 
 \norm{ \left(\sum_{k=1}^N | f_k|^2\right)^\einhalb}_{L^p(\RR^d)}.
 \]
 In order to see this we write  $u_k = e^{-t(-\Delta + V_k)}f_k$ and 
 \[
 \sqrt{t} \nabla e^{-t(-\Delta + V_k)}f_k = \nabla (-\Delta + V_k)^{-\einhalb} (t(-\Delta + V_k))^\einhalb e^{-t(-\Delta + V_k)}f_k.
 \]
We apply Theorems~\ref{thm1.1} and \ref{thm1.2}. The first one for the Riesz transform and the second for the multiplier with the function 
 $m_t(\lambda) = \sqrt{t\lambda}\, e^{-t \lambda}$. 
 \end{example} 
 
 %\hskip
 \noindent{\bf Notation}.  We use the following classical notation. If $(X, \mu)$ is a $\sigma$-finite measure space, the usual Lebesgue spaces $L^p(X,\mu)$ are often denoted by $L^p(X)$ and their norms by $\norm{\cdot}_p$. For a given Banach space $E$, $L^p(X, \mu, E)$ denotes the Bochner $L^p$ space  of functions with  values in $E$. The norm of $E$ will be denoted by $\norm{\cdot}_E$. The space of bounded linear operators from $E$ to $F$ is denoted by ${\mathcal L}(E,F)$. For $T \in {\mathcal L}(L^p,L^q)$ we use the notation
 $\norm{T}_{p-q}$ for its norm. For a given Hilbert space $H$, we denote by $\langle \cdot, \cdot \rangle_H$ its scalar product. 
 
 \medskip
\noindent{\bf Acknowledgments.} The author wishes to thank Christoph Kriegler for pointed out  the statement in Corollary~\ref{cor1.3} and for several interesting discussions on the subject of this paper.

%%%%%%%%%%%%%%%%%%%%%%%%%%%%%%%%%%%%%
\section{Some facts on vector-valued singular integral operators}\label{sec1.5} 
In this section we review some results on operators given singular integrals acting on vector-valued $L^p$-spaces. The material here is essentially taken from the recent paper \cite{HO}, but we make some observations which will be useful for vector-valued Schr\"odinger operators. 

Let $(\MS, \rho, \mu)$ be a metric measure space and set $
  B(x,r) = \{y \in \MS: \; \rho(x,y) < r \}$ to be  
the open ball centered at $x \in \MS$ with radius $r > 0$. Its volume is
denoted by $V(x,r) = \mu( B(x,r) )$. For $j \ge 1$, the annulus
$B(x, (j+1)r) \setminus B(x,j r)$ is denoted by $C_j(x,r)$ and
$C_0(x, r) := B(x, r)$.  We suppose that $(\MS, d, \mu)$ is of
homogeneous type.  That is, there exists a constant $C > 0$ such that 
%that is, the volume satisfies the doubling property
\[
  V(x, 2r) \le C \, V(x,r) \quad \forall  x \in \MS, \ r >0
\]
for every $x $ and $r$. In this case,  it is elementary that there exist constants  $n > 0$ and  $C_n$ such that
\begin{equation}
  \label{eq:gonflement-lambda}
  V(x, \lambda r) \le C_n \, \lambda^n V(x,r) \quad \forall  x \in \MS, \ r >0 \ {\rm and}\ \lambda \ge 1.
\end{equation}

\medskip
The following result provides a weak type extrapolation principle for operators. 
%A DISCUTER: PRENDRE LES ANNEAUX DYADIC  ??? \\

\begin{theorem}\label{thm:main}(\cite{HO}, Theorem~1.1)
  Let $(\MS, \rho, \mu)$ be a metric measure space of homogeneous type,
  let $(E, \norm{\cdot}_E)$ and $(F, \norm{\cdot}_F)$ be Banach
  spaces and let $p_0, p, q, a \in [1, \infty)$ be such that $p_0 < p \le q$
  where $\frac{1}{p} - \frac{1}{q} = \frac{1}{p_0} -  \frac{1}{a}$. Let 
  $T : L^p(\MS, \mu, E) \to L^q(\MS, \mu, F)$ be a bounded linear
  operator and suppose that there exists a bounded linear operator $S : L^p(\MS, \mu, E) \to 
  L^q(\MS, \mu, F)$
  and a family of linear operators
  $(A_r)_{r>0}$ on $L^p(\MS, \mu, E)$ such that
\begin{aufziH}
\item \label{thm:main-a} for some sequence $(\omega_j)$ of
  non-negative numbers satisfying
  \( \dsp \sum_{j\ge 1} j^n \omega_j < \infty \) and for each
  $f \in L^{p_0}(\MS, E)$ with bounded support and each ball $B(x, r)$
  containing its support, we have the off-diagonal bound
  \begin{equation} \label{eq:At-estimate}
    \Bigl(\tfrac{1}{V(x,(j+1)r)} \int_{C_j(x, r)} \bignorm{ (A_r f)(y) }_E^p  \,d\mu(y)\Bigr)^{\frac{1}{p}} \le \frac{\om_j}{V(x,r)^{\frac{1}{p_0}}}
    \; \norm{ f }_{L^{p_0}(\MS, \mu,E)}.
\end{equation}
\item there exist  $\delta, W > 0$ such that
\begin{equation} \label{eq:Hoermander-a-condition}  
 \Bigl( \int_{\MS \setminus B(x, (1{+}\delta)r)} \bignorm{ (T - SA_r) f(y) }_F^a\,d\mu(y)\Bigr)^{\frac{1}{a}}  \quad \le \quad W  \,\norm{ f  }_{L^{p_0}(\MS, \mu,E)}
\end{equation}
for all $x\in \MS$, $r>0$ and
$f \in L^{p_0}(\MS, \mu, E)\cap L^\infty(\MS, \mu, E)$ supported in
$B(x, r)$.
\end{aufziH}
\medskip\noindent
Then $T: L^{p_0}(\MS, \mu,  E) \to L^{a, \infty}(\MS, \mu, F)$ is bounded.\\
In the special case that $p_0{=}a{=}1$, the operator $T$ is of weak-type $(1,1)$. 
\end{theorem}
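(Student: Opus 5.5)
The plan is a Calderón–Zygmund decomposition adapted to the exponents $p_0<p\le q$ and $a$, with the operators $A_r$ playing the role of the approximation of the identity, as in the Duong–McIntosh and Blunck–Kunstmann weak type criteria. Fix $f\in L^{p_0}(\MS,\mu,E)$ (by density, bounded support and in $L^\infty$) and $\alpha>0$. We must show
\[
\mu\{\norm{Tf}_F>\alpha\}\le C\,\alpha^{-a}\norm{f}_{p_0}^{a}.
\]
By homogeneity (replace $f$ by $f/\norm{f}_{p_0}$) we may assume $\norm{f}_{p_0}=1$. We apply the Calderón–Zygmund decomposition to the scalar function $\norm{f}_E^{p_0}$ at a height $\lambda^{p_0}$, with $\lambda$ to be tuned in terms of $\alpha$. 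This gives $f=g+\sum_i b_i$, where
\[
\norm{g}_\infty\lesssim\lambda,\qquad \norm{g}_{p_0}\lesssim 1,
\]
each $b_i$ is supported in a ball $B_i=B(x_i,r_i)$ with $\norm{b_i}_{p_0}^{p_0}\lesssim\lambda^{p_0}V(B_i)$, the balls have bounded overlap, and $\sum_i V(B_i)\lesssim\lambda^{-p_0}$.

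We then split $Tf=Tg+\sum_i SA_{r_i}b_i+\sum_i(T-SA_{r_i})b_i$ and estimate the three level sets at height $\alpha/3$.

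\textbf{The good part.} By interpolation,
\[
\norm{g}_p^p\le\norm{g}_\infty^{p-p_0}\norm{g}_{p_0}^{p_0}\lesssim\lambda^{p-p_0}.
\]
Chebyshev in $L^q$ together with the boundedness of $T:L^p\to L^q$ gives
\[
\mu\{\norm{Tg}_F>\alpha/3\}\lesssim \alpha^{-q}\lambda^{(p-p_0)q/p}.
\]

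\textbf{The exceptional set.} Let $\Omega=\bigcup_i B(x_i,(1+\delta)r_i)$. By doubling, $\mu(\Omega)\lesssim\lambda^{-p_0}$.

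\textbf{The bad tail.} Outside $\Omega$ we use (H2) with $x=x_i$, $r=r_i$:
\[
\Bigl\|\sum_i(T-SA_{r_i})b_i\Bigr\|_{L^a(\MS\setminus\Omega)}\le W\sum_i\norm{b_i}_{p_0}.
\]
Summing $\norm{b_i}_{p_0}$ directly over $i$ is too crude. Instead we first use the triangle inequality in $L^a$ and then Chebyshev for each piece. Since $\norm{b_i}_{p_0}\lesssim\lambda V(B_i)^{1/p_0}$, we need the $\ell^1$ sum to be controlled by $\lambda\,(\sum_i V(B_i))^{1/p_0}$. This would follow from an $\ell^{p_0}$ bound if $a$ were such that the $\ell^a$ norm is what appears. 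So I would rather use the Chebyshev estimate $\mu\{\cdot\}\le\alpha^{-a}\,\mathbb{E}$-type sum, $\alpha^{-a}\sum_i W^a\norm{b_i}_{p_0}^a$, and use $a\ge p_0$ (which holds because $\frac1{p_0}-\frac1a=\frac1p-\frac1q\ge0$) to get $\sum_i\norm{b_i}_{p_0}^a\le(\sum_i\norm{b_i}_{p_0}^{p_0})^{a/p_0}\lesssim1$. To justify summing the $a$-th powers I would treat the pieces via linearity and a sublinear bound in $L^{a,\infty}$ restricted to the complement. This is a delicate point; the case $a=p_0=1$ is the classical one, where the bad tail gives $\alpha^{-1}$. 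In general this step gives the bound $\alpha^{-a}$.

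\textbf{The middle term $\sum_i SA_{r_i}b_i$.} We bound it in $L^q$ through $S$. It suffices to show
\[
\Bigl\|\sum_iA_{r_i}b_i\Bigr\|_p\lesssim \lambda^{1-p_0/p}.
\]
This is the main obstacle. I would use duality against $h\in L^{p'}$ and (H1) on annuli to write
\[
\Bigl|\Bigl\langle \sum_i A_{r_i}b_i,h\Bigr\rangle\Bigr|\le\sum_i\sum_j \omega_j\,V(B_i)^{-1/p_0}\norm{b_i}_{p_0}\,V((j+1)B_i)^{1/p}\Bigl(\int_{C_j}|h|^{p'}\Bigr)^{1/p'}.
\]
The factor $(\int_{C_j}|h|^{p'})^{1/p'}$ is controlled by $V((j+1)B_i)^{1/p'}\inf_{B_i}M(|h|^{p'})^{1/p'}$. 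Doubling produces the factor $j^n$, which is summable against $\omega_j$. Then the sum over $i$ is bounded by
\[
\lambda\int_{\bigcup B_i}M(|h|^{p'})^{1/p'}\lesssim\lambda\,\mu\Bigl(\bigcup B_i\Bigr)^{1/p}\norm{h}_{p'}
\]
using Kolmogorov's inequality for $M$ on $L^{1}$ (since $M$ is weak type $(1,1)$), and this equals $\lambda^{1-p_0/p}\norm{h}_{p'}$. Hence
\[
\mu\Bigl\{\Bigl\|\sum_iSA_{r_i}b_i\Bigr\|_F>\alpha/3\Bigr\}\lesssim\alpha^{-q}\lambda^{(p-p_0)q/p}.
\]

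\textbf{Choice of $\lambda$.} Finally choose $\lambda$ so that $\lambda^{-p_0}=\alpha^{-a}$, that is $\lambda=\alpha^{a/p_0}$. Then
\[
\alpha^{-q}\lambda^{(p-p_0)q/p}=\alpha^{-q+aq/p_0-aq/p}=\alpha^{-a},
\]
where the last equality is exactly the exponent relation $\frac1p-\frac1q=\frac1{p_0}-\frac1a$. All the pieces are therefore $\lesssim\alpha^{-a}$, which proves the weak type $(p_0,a)$ bound. When $p_0=a=1$ this is the weak type $(1,1)$ bound.
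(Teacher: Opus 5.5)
Your Calder\'on--Zygmund scheme is the one the paper relies on (it quotes the theorem and refers to the CZ estimates of the cited work). Your good part, the exceptional set, the middle term and the choice $\lambda=\alpha^{a/p_0}$ are all correct. For the middle term, the duality on the annuli $C_j(x_i,r_i)$, the factor $(j+1)^n$ absorbed by $\sum_j j^n\omega_j$, bounded overlap and Kolmogorov's inequality do give $\|\sum_i A_{r_i}b_i\|_p\lesssim\lambda^{1-p_0/p}$.

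\textbf{The gap is the bad tail.} The bound $\mu\{x\notin\Omega:\|\sum_i u_i(x)\|_F>\alpha/3\}\lesssim\alpha^{-a}\sum_i\|u_i\|_a^a$, with $u_i=(T-SA_{r_i})b_i$, is not available. All the $u_i$ live on the common set $\MS\setminus\Omega$, and $\ell^a$-summation fails for overlapping functions when $a>1$ (take $N$ equal copies). Moreover $L^{a,\infty}$ is normable for $a>1$, so linearity gives you nothing beyond the triangle inequality, i.e.\ the bound $W\sum_i\|b_i\|_{p_0}$.

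That bound works exactly when $p_0=1$. Then $\sum_i\|b_i\|_1\lesssim\lambda\sum_i V(B_i)\lesssim\|f\|_1$, hence $\|\sum_i(T-SA_{r_i})b_i\|_{L^a(\MS\setminus\Omega)}\lesssim W\|f\|_1$, and Chebyshev in $L^a$ gives $\alpha^{-a}$. So the step you discarded as too crude is the right one. With it your argument proves the theorem for $p_0=1$ and every $a\ge1$, which is the only case the paper uses (Proposition~\ref{prop:prop2-1} and its applications).

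\textbf{For $p_0>1$ the step cannot be repaired, because the statement as written is false there.} Take the following data:
\begin{itemize}
\item $\MS=\RR$, $E=F=\CC$, $p=q=2$, $p_0=a\in(1,2)$;
\item $S=0$ and $A_r=0$, so (H1) is vacuous;
\item $T=m(D)$ with $m(\xi)=\sum_{k\ge1}\epsilon_k\chi(2^k(\xi-k))$, where $\chi\in C_c^\infty(-1,1)$ equals $1$ on $[-\frac12,\frac12]$ and $(\epsilon_k)$ is the Rudin--Shapiro sequence.
\end{itemize}

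First, (H2) holds. We have $|m|\le1$, and the kernel $K(x)=\sum_k\epsilon_k e^{ikx}2^{-k}\check\chi(2^{-k}x)$ satisfies $|K(x)|\lesssim\min(1,|x|^{-1})$. For $f$ supported in $B(x_0,r)$ this gives $|Tf(y)|\lesssim_\delta\|f\|_1|y-x_0|^{-1}$ off $B(x_0,(1+\delta)r)$. Combined with $\|f\|_1\le(2r)^{1-1/p_0}\|f\|_{p_0}$, this is (H2) with a uniform $W$.

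Second, $T$ is not of weak type $(p_0,p_0)$. Let $f=\psi\sum_{k=1}^N e^{ik\cdot}$ with $\supp\hat\psi\subset[-2^{-N-1},2^{-N-1}]$. Then $Tf=\psi\sum_{k=1}^N\epsilon_k e^{ik\cdot}$. Since $\psi$ varies on the scale $2^N$, one has $\|P\psi\|_{L^s(\RR)}\simeq\|P\|_{L^s(\mathbb T)}\|\psi\|_{L^s(\RR)}$ for $2\pi$-periodic $P$. Rudin--Shapiro polynomials are flat, so their $L^s(\mathbb T)$ norm is $\simeq\sqrt N$, while the Dirichlet kernel has $L^s(\mathbb T)$ norm $\simeq N^{1-1/s}$. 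Hence $\|T\|_{L^s\to L^s}\gtrsim N^{1/s-1/2}\to\infty$ for every $s\in(1,2)$. Marcinkiewicz interpolation with the $L^2$ bound then rules out weak type $(p_0,p_0)$.

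What makes $p_0>1$ work in the Blunck--Kunstmann theorem (for $a=p_0$) is an annular off-diagonal hypothesis. After H\"older, it controls $\|T(I-A_r)f\|_{L^1(\MS\setminus 4B)}$ by $V(B)^{1-1/p_0}\|f\|_{p_0}$, so the bad pieces can be summed in $L^1$. The global $L^a$ bound (H2) does not allow this.
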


Note that the same result holds with $\MS$ replaced  by any non-empty open subset $\Omega$, see \cite{HO} Corollary~1.3. It is worth mentioning  that  condition \eqref{eq:At-estimate} is satisfied in the particular case where $A_r, r > 0$ are given by kernels which enjoy a Gaussian upper bound. 

  Now, suppose, in
  addition to the hypotheses of Theorem~\ref{thm:main} that the
  operators $T$ and $TA_r$ are given by (singular) kernels
  $\vec{K}_T(x,y)$ and $\vec{K}_{TA_r}(x,y)$. That is, $\vec{K}_T$ and
  $\vec{K}_{TA_r}$ are strongly measurable functions on $\MS\times\MS$
  with values in ${\mathcal L}(E, F)$ and locally integrable on
  $\MS\times\MS \setminus \{(x,x): \; x \in \MS\}$ ensuring that
  \begin{equation}\label{eq:2-2-sing}
    T f(x) = \int_\MS \vec{K}_T(x,y) f(y) \, d\mu(y)
  \end{equation} 
  is well defined as an element of $F$ for all
  $f \in L^\infty(\MS, \mu , E)$ that have a bounded support and for (a.e.) 
  $x \not\in \supp(f)$ (and similarly for $TA_r$ and
  $\vec{K}_{TA_r}(x,y)$). Then we have

\begin{proposition}\label{prop:prop2-1}
  Let $a \in [1, \infty)$ and $ \delta > 0$. Consider the following
  properties.
\begin{aufzi}
\item \label{prop2-1-1}(Hörmander condition) There exists a constant $W_0$ such that for all $y,  y' \in \MS$,
\begin{equation}\label{eq:2-1-Ho}
  \int_{\rho(x,y) \ge (1{+}\delta) \rho(y,y')}  \norm{ \vec{K}_T(x,y) u - \vec{K}_{T}(x,y') u }_{F}^a  \, d\mu(x) \le W_0\, \norm{u}_E^a \quad{\rm for \ all}\  u \in E.
  \end{equation}
\item \label{prop2-1-2} There exists a family $(A_r)_{r>0}$ of linear
  operators on $L^p(\MS, \mu, E)$ which satisfies the off-diagonal
  bound \eqref{eq:At-estimate} and a constant $W_1$ such that
\[
  \sup_{y \in \MS, r >0} \; 
  \int_{\rho(x,y) \ge (1{+}\delta)r} \norm{ \vec{K}_T(x,y) u - \vec{K}_{TA_r}(x,y) u }_{F}^a  \, d\mu(x) \le W_1\, \norm{u}_E^a \quad{\rm for \ all}\  u \in E. 
\] 
\item \label{prop2-1-3} There exists  a constant $ W_2 > 0$ such that
\[
 \int_{\MS \setminus B(x, (1{+}\delta)r)} \bignorm{ (T - TA_r)    f(y) }_F^a\,d\mu(y)  \le \quad W_2 \,\norm{
    f }_{L^{1}(\MS,\mu,E)}^a
\]
for all $x\in \MS$, $r>0$ and
$f \in L^{1}(\MS,\mu, E)\cap L^\infty(\MS,\mu, E)$ supported in
$B(x, r)$.
\end{aufzi}
Then
$\ref{prop2-1-1} \Rightarrow \ref{prop2-1-2} \Rightarrow
\ref{prop2-1-3}$. In particular, conditions \ref{prop2-1-1} and  \ref{prop2-1-2} both imply that the operator $T$ is of weak type $(1, a)$ as soon as 
  $ T : L^p(\MS, \mu, E) \to L^q(\MS, \mu, F)$ is bounded for some $p$ and $q$ such that $\frac{1}{p} - \frac{1}{q} = \frac{1}{p_0} -  \frac{1}{a}$. 
\end{proposition}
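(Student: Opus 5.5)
We prove the two implications separately and then combine them with Theorem~\ref{thm:main}, taking $S = T$ there and $p_0 = 1$.

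\emph{Step 1: \ref{prop2-1-1} $\Rightarrow$ \ref{prop2-1-2}.} Fix $y$, $r>0$ and $u \in E$. Write the kernel of $TA_r$ as a composition,
\[
\vec{K}_{TA_r}(x,y)u = \int_\MS \vec{K}_T(x,z)\, [a_r(z,y)u] \, d\mu(z),
\]
where $a_r$ is the kernel of $A_r$. To avoid assuming that $A_r$ has a kernel, work by duality or approximation: replace the Dirac mass $u\delta_y$ by $u\,\eins_{B(y,\epsilon)}/V(y,\epsilon)$, run the argument below, and let $\epsilon \to 0$. Then
\[
\vec{K}_T(x,y)u - \vec{K}_{TA_r}(x,y)u = \int \bigl[\vec{K}_T(x,y) - \vec{K}_T(x,z)\bigr] a_r(z,y)u\,d\mu(z) + \vec{K}_T(x,y)\Bigl(u - \int a_r(z,y)u\,d\mu(z)\Bigr).
\]
The intended splitting is to decompose the $z$-integration into the annuli $C_j(y,r)$. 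On $C_0$ we have $\rho(y,z)<r$, so for $\rho(x,y)\ge(1+\delta)r$ the Hörmander condition \eqref{eq:2-1-Ho} applies directly. On $C_j$ with $j\ge1$ we must enlarge the region: the Hörmander condition is only available where $\rho(x,y)\ge (1+\delta)\rho(y,z)$, and $\rho(y,z)$ can now be as large as $(j+1)r$. We therefore bound the $L^a(d\mu(x))$ norm over the complementary set crudely, using $\norm{\vec{K}_T(x,y)u}+\norm{\vec{K}_T(x,z)u}$ and the size of $T$ there. The off-diagonal bound \eqref{eq:At-estimate} with $p_0=1$ gives mass $\lesssim \omega_j$ for $a_r(\cdot,y)u$ on $C_j$. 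Minkowski's integral inequality in $L^a(d\mu(x))$ then sums these contributions to $\sum_j j^n\omega_j\,\norm{u}_E$.

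\emph{Step 2: \ref{prop2-1-2} $\Rightarrow$ \ref{prop2-1-3}.} This is routine. For $f$ supported in $B(x,r)$ and $w$ with $\rho(w,x)\ge(1+\delta)r$, write
\[
(T-TA_r)f(w) = \int_{B(x,r)} \bigl(\vec{K}_T(w,y) - \vec{K}_{TA_r}(w,y)\bigr) f(y)\,d\mu(y).
\]
The kernel representation is valid here because $w\notin \supp f$. Applying Minkowski's integral inequality in $L^a(d\mu(w))$ over $\MS\setminus B(x,(1+\delta)r)$ reduces the estimate to \ref{prop2-1-2} applied pointwise in $y$. Note that \ref{prop2-1-2} is uniform in $y$, and its region $\rho(w,y)\ge(1+\delta)r$ has to be compared with $\rho(w,x)\ge(1+\delta)r$. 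We handle this by using \ref{prop2-1-2} with the enlarged radius $2r$ and a smaller $\delta$, adjusting constants through doubling. The result is the bound $W_1^{1/a}\norm{f}_{L^1(\MS,\mu,E)}$.

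\emph{Step 3: the final claim.} Take $S=T$ and $p_0=1$ in Theorem~\ref{thm:main}. Then \eqref{eq:Hoermander-a-condition} is exactly \ref{prop2-1-3}, and we conclude that $T$ is of weak type $(1,a)$.

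The main obstacle is Step 1, specifically the annuli $j\ge1$, where the Hörmander region no longer covers the integration region. The tail term also has to be controlled, which needs a normalization of the form $A_r 1 \approx 1$ or a direct kernel comparison. My first attempt is to use \eqref{eq:At-estimate} with the weights $j^n\omega_j$ to absorb the losses from the enlarged balls.
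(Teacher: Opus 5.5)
Your Step~1 has a genuine gap, and it comes from the quantifier in (b). Condition (b) asserts that \emph{there exists} a family $(A_r)_{r>0}$, so to prove (a)$\Rightarrow$(b) you get to choose $A_r$. You instead try to prove the kernel estimate for an arbitrary family satisfying \eqref{eq:At-estimate}, and that cannot work. Hypothesis (a) only controls differences $\vec{K}_T(x,y)u-\vec{K}_T(x,z)u$, never the size of $\vec{K}_T$, and no size bound on $T$ is assumed.

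On the pieces $z\in C_j(y,r)$, $j\ge 1$, your crude bound needs $\norm{\vec{K}_T(x,z)u}_F$ in $L^a(d\mu(x))$ over $\{(1+\delta)r\le\rho(x,y)<(1+\delta)\rho(y,z)\}$. For large $j$ this set contains $x=z$, i.e.\ the singularity of the kernel; for a standard Calder\'on--Zygmund kernel this is not even locally integrable. The tail term $\vec{K}_T(x,y)\bigl(u-\int a_r(z,y)u\,d\mu(z)\bigr)$ is likewise not in $L^a$ over $\{\rho(x,y)\ge(1+\delta)r\}$ in general: for $a=1$ and a Calder\'on--Zygmund kernel the integral diverges logarithmically at infinity. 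So for a generic $A_r$ the estimate in (b) is false, and no weighting by $j^n\omega_j$ can rescue it.

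The missing idea is the choice made in the paper. Let $A_r$ be the averaging operator with kernel $\vec{h}_r(z,y)=\eins_{B(y,r)}(z)V(y,r)^{-1}I_E$.
\begin{itemize}
\item By doubling, for $f$ supported in $B(x,r)$ one has $\supp A_rf\subset B(x,2r)$ and $\norm{A_rf}_{L^\infty(\MS,E)}\lesssim \norm{f}_{L^1(\MS,E)}/V(x,r)$. So \eqref{eq:At-estimate} holds with $p_0=1$.
\item The kernel is supported in $B(y,r)$, so only your $C_0$ piece exists.
\item The kernel integrates to $I_E$, so there is no tail term.
\end{itemize}
Hence
\[
\vec{K}_T(x,y)u - \vec{K}_{TA_r}(x,y)u = \frac{1}{V(y,r)}\int_{B(y,r)} \bigl[\vec{K}_T(x,y)u - \vec{K}_T(x,z)u\bigr]\, d\mu(z).
\]
Now apply Jensen's inequality for the probability measure $\eins_{B(y,r)}\,d\mu/V(y,r)$, then Fubini. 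For $z\in B(y,r)$ you have the inclusion $\{\rho(x,y)\ge(1+\delta)r\}\subset\{\rho(x,y)\ge(1+\delta)\rho(y,z)\}$, so (a) applies and gives (b) with $W_1=W_0$.

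In Step~2 you correctly spot that the regions do not match. For $z\in B(x,r)$ and $\rho(w,x)\ge(1+\delta)r$ one only has $\rho(w,z)>\delta r$; the paper's proof applies $W_1$ here without comment. Your remedy does not work, though. Condition (b) is assumed only for the given $\delta$, and using it at radius $2r$ both shrinks the region and replaces $A_r$ by $A_{2r}$.

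The right adjustment is to enlarge the excluded ball in (c). The condition $\rho(w,x)\ge(2+\delta)r$ forces $\rho(w,z)\ge(1+\delta)r$. Minkowski then gives (c) with $\delta$ replaced by $1+\delta$, which is all Theorem~\ref{thm:main} requires.
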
 

Before we discuss the proof we make some comments. The statement  appears in \cite{HO}, Proposition~1.5, but with condition
\begin{equation}\label{eq:2-1-Ho-1}
  \sup_{y,  y' \in \MS} \int_{\rho(x,y) \ge (1{+}\delta) \rho(y,y')}  \norm{ \vec{K}_T(x,y) - \vec{K}_{T}(x,y') }_{{\mathcal L}(E,F)}^a  \, d\mu(x) < \infty
  \end{equation}
  instead of \eqref{eq:2-1-Ho} and 
  \begin{equation}\label{eq:2-1-HO-2}
  \sup_{y \in \MS, r >0} \; 
  \int_{\rho(x,y) \ge (1{+}\delta)r} \norm{ \vec{K}_T(x,y) - \vec{K}_{TA_r}(x,y) }_{{\mathcal L}(E,F)}^a  \, d\mu(x) < \infty 
\end{equation}
in \ref{prop2-1-2}. The aim of the above proposition is to show that the "strong" estimates in \ref{prop2-1-1} and \ref{prop2-1-2},  instead of the "uniform norm" ones in \eqref{eq:2-1-Ho-1}
and \eqref{eq:2-1-HO-2}, are enough to obtain weak type $(1,a)$. This refinement will be very useful in the study of Riesz transforms of vector-valued Schr\"odinger operators. In this setting, the heat kernel  ${\bf p}(t,\cdot,y)$,   
 is a function which takes values in  ${\mathcal L}(\ell^2)$ and it is not an element of the form domain (in contrast to  ${\bf p}(t,\cdot,y)u$ for $u \in \ell^2$). This simple fact plays an important role in the proof of some weighted gradient estimates of the heat kernel. See Section~\ref{sec3}. 
 In this vector-valued setting,  the H\"ormander  condition, with $a=1$, is formulated as in \eqref{eq:2-1-Ho-1} with the uniform norm in  \cite{Grafakos-Liu-Yang} who proved that it implies weak type $(1,1)$. Another reason why it is preferable to work with \ref{prop2-1-1} and \ref{prop2-1-2} comes from measurability  issues for operators in the vector-valued setting. In general it is hard to have measurability in ${\mathcal L}(E,F)$ for the uniform norm so that  one  can  write \eqref{eq:2-1-Ho-1} and \eqref{eq:2-1-HO-2}. On the contrary, measurability in the strong sense is easier and follows from weak measurability for separable spaces via Pettis' theorem.

\begin{proof} We follow closely \cite{HO}, which uses similar ideas as in \cite{Duong-McIntosh}. We prove that
  $\ref{prop2-1-1} \Rightarrow \ref{prop2-1-2}$. Let $u \in E$ and set 
\[
J(x,y) = \norm{ \vec{K}_T(x,y)u - \vec{K}_{TA_r}(x,y)u }_{F}^a  \quad{\rm and} \quad I = \int_{\rho(x,y) \ge (1{+}\delta)r}   J(x,y) \, d\mu(x),
\]
where $A_r$ is the operator defined by the kernel
\[
  \vec{h_r}(z,y) = \frac{\eins_{B(y,r)}(z)}{V(y,r)} I_E
\]
in which $I_E$ denotes the identity operator on $E$. Clearly, this kernel has  a Gaussian upper bound and so  \eqref{eq:At-estimate} holds for $p_0 = 1$. The kernel
$\vec{K}_{TA_r}(x,y)$ is then given by the usual composition
formula. Hence
\begin{align*}
J(x,y) = &  \; \norm{ \int_{\rho(z,y) \le r} [ \vec{K}_T(x,y) - \vec{K}_{T}(x,z)] \vec{h_r}(z,y)u\, d\mu(z) }_{F}^a \\
= &  \; \norm{ \int_{\rho(z,y) \le r} \frac{\eins_{B(y,r)}(z)}{V(y,r)} [ \vec{K}_T(x,y)u - \vec{K}_{T}(x,z)u] \, d\mu(z) }_{F}^a \\
\le& \; \left( \int_{\rho(z,y) \le r} \norm{ \vec{K}_T(x,y)u - \vec{K}_{T}(x,z)u}_{F} \frac{\eins_{B(y,r)}(z)}{V(y,r)} \, d\mu(z) \right)^a\\
\le& \;  \int_{\rho(z,y) \le r} \norm{ \vec{K}_T(x,y) u- \vec{K}_{T}(x,z)u}_{F}^a \eins_{B(y,r)}(z) \, d\mu(z) V(y,r)^{-1}.
\end{align*}
Using Fubini and  \ref{prop2-1-1}, we obtain 
\begin{align*}
I \le& \; \int_{\rho(z,y) \le r} \int_{\rho(x,y) \ge (1+ \delta)d(z,y)}\norm{ \vec{K}_T(x,y)u - \vec{K}_{T}(x,z)u}_{F}^a\, d\mu(x)  \eins_{B(y,r)}(z) \, d\mu(z) V(y,r)^{-1}\\
  \le& \; W_0\,  \norm{u}_E^a  \int_{d(z,y) \le r} \eins_{B(y,r)}(z) \, d\mu(z) V(y,r)^{-1} = W_0\,  \norm{u}_E^a,
\end{align*}
which is  property \ref{prop2-1-2}. 

\smallskip\noindent Assume now that \ref{prop2-1-2} is satisfied. Let
$f \in L^{1}(\MS,\mu, E)\cap L^\infty(\MS,\mu, E)$ with support
contained in a ball $B(x, r)$. Then,
\begin{align*}
    &  \; \Bigl( \int_{\MS \setminus B(x, (1{+}\delta)r)} \bignorm{ (T - TA_r) f(y) }_F^a\,d\mu(y)\Bigr)^{\frac{1}{a}}\\
   =&  \; \Bignorm{ \int_\MS  ( \vec{K}_T(\cdot,z) - \vec{K}_{TA_r}(\cdot,z)) f(z)\, d\mu(z)}_{L^a(\MS \setminus B(x, (1{+}\delta)r), \mu , F)}\\
\le &  \; \int_\MS  \bignorm{ ( \vec{K}_T(\cdot,z)f(z) - \vec{K}_{TA_r}(\cdot,z) f(z))\, d\mu(z)}_{L^a(\MS \setminus B(x, (1{+}\delta)r), \mu ,  F)}\\
=&   \; \int_{\MS} \left( \int_{\MS \setminus B(x, (1{+}\delta)r)} \norm{\vec{K}_T(y,z)f(z) - \vec{K}_{TA_r}(y,z)f(z)}_{F}^a\, d\mu(y) \right)^{\tfrac{1}a} \, d\mu(z)\\
\le&   \; W_1^{\tfrac{1}a}\,   \int_\MS \norm{f(z)}_{E}\, d\mu(z),
\end{align*}
which  proves  property \ref{prop2-1-3}.\\
 The final statement in the proposition follows from Theorem~\ref{thm:main}. 
\end{proof}

%%%%%%%%%%%%%%%%%%%%%%%%%%%%%%%%%%%%%%%%%%%%%%
\section{$L^p$-bounds for Riesz transforms of Schr\"odinger operators}\label{sec2-2}
Very recently, it was proved in \cite{Dziubanski} that for $d \ge 3$ the Riesz transform $\nabla (-\Delta + V)^{-\einhalb}$ is bounded on $L^p(\RR^d)$  for $p \in (1, 2]$ with bound given 
by \eqref{eq2.1} below. The elegant arguments there are  quite elementary.  
In the theorem below, we state the main result from \cite{Dziubanski} and give a proof. The ideas are the same as in that paper but we do not rely on integral kernels or Green functions so that the  assumption $d \ge 3$ can be ignored. In addition,  the proof below does not use explicitly  the expression of the heat kernel and hence it is valid  in a more general setting such as complete Riemannian manifolds or operators with variable coefficients on Euclidean domains. %as soon as one know that the Riesz transform $\nabla (-\Delta)^{-\sfrac{1}{2}}$ is bounded on $L^p$. 

\begin{theorem} \label{thm2.1}(\cite{Dziubanski}). 
Let $d \ge 1$, $ p \in (1, 2]$  and let $\LL = -\Delta + V$ be a Schr\"odinger operator with a non-negative potential $V \in L^1_{loc}(\RR^d)$. Then the Riesz transform $\nabla \LL^{-\sfrac{1}{2}}$ is bounded on $L^p(\RR^d)$ and we have the estimate 
\begin{equation}\label{eq2.1}
\norm{\nabla \LL^{-\sfrac{1}{2}}}_{L^p-L^p} \le 2^{\frac{2-p}{p}} \, \norm{\nabla (-\Delta)^{-\sfrac{1}{2}}}_{L^p-L^p}.
\end{equation}
In particular, the $L^p$-norm of $\nabla \LL^{-\sfrac{1}{2}}$ can be estimated by a constant independent of $d$ and $V$.\\
For $p=1$, one has the following weak type $(1,1)$ estimate
\begin{equation}\label{eq2.1-1}
\norm{\nabla \LL^{-\sfrac{1}{2}}}_{L^1-L^{1,\infty}} \le 2 \, \norm{\nabla (-\Delta)^{-\sfrac{1}{2}}}_{L^1-L^{1,\infty}}.
\end{equation}
\end{theorem}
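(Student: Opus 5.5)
The plan is to factor the Riesz transform through the free one,
\[
\nabla \LL^{-\sfrac{1}{2}} = \nabla(-\Delta)^{-\sfrac{1}{2}} \circ T, \qquad T := (-\Delta)^{\sfrac{1}{2}}\LL^{-\sfrac{1}{2}},
\]
so that everything reduces to bounding the single operator $T$ on $L^p$, with no reference to integral kernels or to the dimension. By the Riesz--Thorin theorem, \eqref{eq2.1} follows from the two endpoint bounds $\norm{T}_{2-2}\le 1$ and $\norm{T}_{1-1}\le 2$: with $\frac1p=(1-\theta)+\frac{\theta}{2}$ one has $1-\theta=\frac{2-p}{p}$, and this gives exactly $\norm{T}_{p-p}\le 2^{\frac{2-p}{p}}$. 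Composing with $\nabla(-\Delta)^{-\sfrac12}$ then yields \eqref{eq2.1}. For $p=1$, composing the $L^1$ bound for $T$ with the weak type $(1,1)$ bound of the free Riesz transform gives \eqref{eq2.1-1} directly. The factorization itself should be justified on a core such as $\LL^{\sfrac12}(D(\LL))$, or via the range of $\LL^{\sfrac12}$, which is dense since $\LL$ is injective for $V\ge 0$ (it suffices when $V=0$ a.e.). The identity is then extended by density.

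The $L^2$ endpoint is the form inequality. For $u\in D(\LL^{\sfrac12})$ we have $\norm{(-\Delta)^{\sfrac12}u}_2=\norm{\nabla u}_2$ and $\norm{\LL^{\sfrac12}u}_2^2=\norm{\nabla u}_2^2+\int V|u|^2\ge\norm{\nabla u}_2^2$, since $V\ge0$. Taking $u=\LL^{-\sfrac12}f$ gives $\norm{T f}_2\le\norm{f}_2$.

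The $L^1$ endpoint is the main step, and I would handle it through positivity and conservation of mass. The model computation is at the level of first powers:
\[
(-\Delta)\LL^{-1}=I-V\LL^{-1}.
\]
Here $V\LL^{-1}$ (or, more safely, $(V+\lambda)(\LL+\lambda)^{-1}$) is positivity preserving, since $(\LL+\lambda)^{-1}$ is dominated by $(-\Delta+\lambda)^{-1}$. It is also an $L^1$-contraction: for $0\le f\in L^1$ one has $\int(V+\lambda)(\LL+\lambda)^{-1}f=\int f-\int(-\Delta)(\LL+\lambda)^{-1}f\le\int f$, because $\int\Delta g=0$, or at least $\ge 0$ after a cutoff argument. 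This gives the model bound $\norm{(-\Delta)(\LL+\lambda)^{-1}}_{1-1}\le 2$. To pass to square roots, I would write both $\LL^{-\sfrac12}$ and $(-\Delta)^{\sfrac12}$ through resolvent or heat-semigroup integrals, for instance $\LL^{-\sfrac12}=\frac1\pi\int_0^\infty\lambda^{-\sfrac12}(\LL+\lambda)^{-1}\,d\lambda$. I would use the Duhamel/resolvent identity $(-\Delta+\lambda)^{-1}-(\LL+\lambda)^{-1}=(-\Delta+\lambda)^{-1}V(\LL+\lambda)^{-1}\ge 0$. The aim is to express $T=I-R$ with $R$ positivity preserving and mass non-increasing on $L^1$, which immediately gives $\norm{T}_{1-1}\le 2$.

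I expect this decomposition of $T$ into ``identity minus a positive $L^1$-contraction'' to be the real obstacle: $(-\Delta)^{\sfrac12}$ is not positivity preserving and does not commute with $\LL$. If the square-root version resists, my fallback is to transfer the factorization to the level of $\LL^{-1}$. That is, I would write $\nabla\LL^{-\sfrac12}=\nabla(-\Delta)^{-\sfrac12}\,\bigl[(-\Delta)^{\sfrac12}\LL^{-1}\bigr]\LL^{\sfrac12}$, or use the heat-semigroup representation $\LL^{-\sfrac12}=\Gamma(\tfrac12)^{-1}\int_0^\infty t^{-\sfrac12}e^{-t\LL}\,dt$ together with $e^{-t\LL}=e^{t\Delta}-\int_0^t e^{(t-s)\Delta}Ve^{-s\LL}\,ds$. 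In that route I would exploit the same two facts, positivity and the mass identity $\int_0^\infty\!\!\int Ve^{-s\LL}f\,dx\,ds\le\int f$ for $f\ge0$. These are the only properties of $-\Delta$ being used (Markovian, conservative semigroup with bounded Riesz transform), which is why the argument would carry over to the more general settings mentioned before the statement.
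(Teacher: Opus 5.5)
Your architecture is the paper's. You factor through $T=(-\Delta)^{\sfrac{1}{2}}\LL^{-\sfrac{1}{2}}$, get $\norm{T}_{2-2}\le 1$ from the form inequality, aim for $\norm{T}_{1-1}\le 2$, and interpolate; your computation $1-\theta=\frac{2-p}{p}$ is correct. For $p=1$ you compose with the weak type bound of $\nabla(-\Delta)^{-\sfrac{1}{2}}$.

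\textbf{The gap is the $L^1$ endpoint, which is the actual content of the theorem.} You state the target $T=I-R$ with $R$ positivity preserving and mass non-increasing, but you never construct $R$. You say you expect this to be the real obstacle. Neither fallback closes it. The first leaves the unbounded factor $\LL^{\sfrac{1}{2}}$ on the right. In the Duhamel route, the positive difference $(-\Delta)^{-\sfrac{1}{2}}-\LL^{-\sfrac{1}{2}}$ gets hit by $(-\Delta)^{\sfrac{1}{2}}$, and you do not show the result is positive. As written, you establish the reduction, the $L^2$ bound and the maximal inequality $\int V\LL^{-1}f\,dx\le\int f\,dx$ (your model bound), but not the $L^1$ bound for $T$.

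\textbf{The missing step is short and uses only the tools you list.} Represent $(-\Delta)^{\sfrac{1}{2}}$ itself through resolvents,
$(-\Delta)^{\sfrac{1}{2}}g=\frac{2}{\pi}\int_0^\infty[I-(1+t^2(-\Delta))^{-1}]g\,\frac{dt}{t^2}$.
Take $g=\LL^{-\sfrac{1}{2}}f$ and add and subtract $(1+t^2\LL)^{-1}$ under the integral. The $\LL$-part is the same formula for $\LL^{\sfrac{1}{2}}$ applied to $\LL^{-\sfrac{1}{2}}f$, so it equals $f$. What remains is $-Rf$ with
\[
Rf=\frac{2}{\pi}\int_0^\infty\bigl[(1+t^2(-\Delta))^{-1}-(1+t^2\LL)^{-1}\bigr]\LL^{-\sfrac{1}{2}}f\,\frac{dt}{t^2}=\frac{2}{\pi}\int_0^\infty(1+t^2(-\Delta))^{-1}V(1+t^2\LL)^{-1}\LL^{-\sfrac{1}{2}}f\,dt .
\]
By the resolvent identity, $R$ is positivity preserving. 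So $(-\Delta)^{\sfrac{1}{2}}$ never needs to be positive or to commute with $\LL$; it enters only through its resolvents.

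For the mass, take $f\ge 0$. Use that $(1+t^2(-\Delta))^{-1}$ is an $L^1$-contraction, then Fubini and $\LL^{-\sfrac{1}{2}}=\frac{2}{\pi}\int_0^\infty(1+t^2\LL)^{-1}\,dt$ once more. This gives
\[
\int Rf\,dx\le\int V\LL^{-1}f\,dx\le\norm{f}_1
\]
by your maximal inequality. Hence $\norm{T}_{1-1}\le 2$.

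Your heat-semigroup route can also be completed, but it needs one extra observation. After Fubini, the free factor acting on $Ve^{-s\LL}f$ is $\phi(s(-\Delta))$ with $\phi(x)=\int_0^\infty(x+w)^{-\sfrac{1}{2}}e^{-w}\,dw$. This $\phi$ is completely monotone, so $\phi(s(-\Delta))$ is positivity preserving and conservative up to the factor $\phi(0)=\sqrt{\pi}$.
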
 

\begin{proof} We recall the following basic  formulas
\begin{equation}\label{eq2.2}
\LL^{-\sfrac{1}{2}} = \frac{2}{\pi} \int_0^\infty (1+ t^2 \LL)^{-1}\, dt \quad{\rm and} \quad \LL^{\sfrac{1}{2}} = \LL \LL^{-\sfrac{1}{2}} = \frac{2}{\pi} \int_0^\infty [ I - (1+t^2\LL)^{-1} ] \, \frac{dt}{t^2}.
\end{equation}
One may also replace $\LL$ by $\LL + \epsilon$ to have an invertible operator and then avoid some domain questions in \eqref{eq2.2}. One proves \eqref{eq2.1} for $\LL + \epsilon$ and then let $\epsilon \to 0$. We shall ignore these simple details  in the arguments below.
We also recall the following well known $L^1$-maximal inequalities 
\begin{equation}\label{eq2.3}
\norm{V u}_{L^1} \le  \norm{\LL u}_{L^1} \quad {\rm and } \quad \norm{\Delta u}_{L^1} \le 2\,  \norm{\LL u}_{L^1}.
\end{equation} 
For simplicity of notation, we set $\LL_0 = - \Delta$. Following the ideas in \cite{Dziubanski} we estimate the $L^1$-norm of the operator $\LL_0^{\sfrac{1}{2}} \LL^{-\sfrac{1}{2}}$.  We have 
\begin{eqnarray*}
\LL_0^{\sfrac{1}{2}} \LL^{-\sfrac{1}{2}} f &=& \frac{2}{\pi} \int_0^\infty [ I - (1+t^2\LL_0)^{-1} ] \LL^{-\sfrac{1}{2}} f \, \frac{dt}{t^2}\\
&=& \frac{2}{\pi} \int_0^\infty [  (1+t^2\LL)^{-1} - (1+t^2\LL_0)^{-1} ] \LL^{-\sfrac{1}{2}} f \, \frac{dt}{t^2} +\\
 &&   \frac{2}{\pi} \int_0^\infty [ I - (1+t^2\LL)^{-1} ] \LL^{-\sfrac{1}{2}} f \, \frac{dt}{t^2}\\
&=& f - Tf
\end{eqnarray*}
with 
$$ Tf = \frac{2}{\pi} \int_0^\infty [  (1+t^2\LL_0)^{-1} - (1+t^2\LL)^{-1} ] \LL^{-\sfrac{1}{2}} f \, \frac{dt}{t^2}.$$
We explicit the difference of the two resolvents in the expression of $Tf$ so that 
$$ Tf =  \frac{2}{\pi} \int_0^\infty [  (1+t^2\LL_0)^{-1} V (1+t^2\LL)^{-1} ] \LL^{-\sfrac{1}{2}} f \, dt.$$
Observe that $T$ is  a positive operator since all the resolvents as well as $\LL^{-\sfrac{1}{2}}$ are positive operators. Hence we can restrict our attention in the estimate of $\norm{Tf}_{L^1}$ to nonnegative $f$. Now, for nonnegative $f$,
\begin{eqnarray*}
\norm{Tf}_{L^1} &=& \int_{\RR^d} T f(x) \, dx\\
&=& \frac{2}{\pi} \int_0^\infty \int_{\RR^d}   (1+t^2\LL_0)^{-1} V (1+t^2\LL)^{-1}  \LL^{-\sfrac{1}{2}} f(x)\, dx\, dt\\
&\le& \frac{2}{\pi} \int_0^\infty \int_{\RR^d}   V (1+t^2\LL)^{-1}  \LL^{-\sfrac{1}{2}} f(x)\, dx\, dt\\
&=& \frac{2}{\pi} \int_{\RR^d}   V \LL^{-1} \int_0^\infty (1+t^2\LL)^{-1}  \LL^{\sfrac{1}{2}} f(x)\, dt\, dx\\
&=& \int_{\RR^d} V \LL^{-1} f (x)\, dx.
\end{eqnarray*}
Note that in the third line we used the fact that the resolvent $(1+t^2\LL_0)^{-1}$ is a contraction on $L^1(\RR^d)$. We apply the maximal inequality \eqref{eq2.3} to estimate the latest term by $\norm{f}_{L^1}$. Thus,
\begin{equation}\label{eq2.4}
\norm{\LL_0^{\sfrac{1}{2}} \LL^{-\sfrac{1}{2}}}_{L^1-L^1} \le 2.
\end{equation} 
The  $L^2$-estimate 
\begin{equation}\label{eq2.4-1}
\norm{\LL_0^{\sfrac{1}{2}} \LL^{-\sfrac{1}{2}}}_{L^2-L^2} \le 1
\end{equation} 
follow readily from the fact that 
\begin{eqnarray*}
\norm{\LL_0^{\sfrac{1}{2}} f}_{L^2}^2 &=& \int_{\RR^d}  | \nabla f |^2 \, dx \\
&\le &  \int_{\RR^d} | \nabla f |^2 \, dx + \int_{\RR^d} V | f |^2 \, dx = \norm{\LL^{\sfrac{1}{2}} f}_{L^2}^2.
\end{eqnarray*}
Hence, by interpolation we have for $p \in [1, 2]$,
\begin{equation}\label{eq2.5}
\norm{\LL_0^{\sfrac{1}{2}} \LL^{-\sfrac{1}{2}}}_{L^p-L^p} \le 2^{\frac{2-p}{p}}.
\end{equation} 
Finally, this estimate together with  the factorisation  $\nabla \LL^{-\sfrac{1}{2}} = \nabla \LL_0^{-\sfrac{1}{2}} \LL_0^{\sfrac{1}{2}} \LL^{-\sfrac{1}{2}}$ implies both  \eqref{eq2.1} and \eqref{eq2.1-1}. 
\end{proof} 

The other type of the Riesz transform $V^{\sfrac{1}{2}} \LL_0^{-\sfrac{1}{2}}$ was studied in \cite{Wrobel}. Using Stein complex interpolation together with the $L^1$-maximal inequality \eqref{eq2.3}, it is shown there that for all $p \in (1,2]$ and $\alpha \in [0, \frac{1}{p}]$,
\begin{equation}\label{eq2.6}
\norm{V^\alpha \LL^{-\alpha}}_{L^p-L^p} \le C_{p,\alpha}
\end{equation}
with a constant $C_{p,\alpha}$  independent of $d$ and of $V$. It is  however not clear  if such  estimate holds  with some  specific constant as in \eqref{eq2.1} when $\alpha = \sfrac{1}{2}$.

%%%%%%%%%%%%%%%%%%%%%%%%%%%%%%%%%%%%%%%%%%%%%%%%%%%%%%%%%%%%
\section{Vector-valued Schr\"odinger operators}\label{sec3}

This section is devoted to Schr\"odinger operators on $L^2(\RR^d, \ell^2)$. We start by the definition of these  operators and then prove Gaussian heat kernels and give  some consequences. The Gaussian bounds will be used in the next section to prove the boundedness of the corresponding  Riesz transforms on $L^p(\RR^d, \ell^2)$. It is worth pointing out that all that we do here for ${\mathcal A} = -{\bf \Delta} + {\bf V}$ on $L^p(\RR^d, \ell^2)$ can be done exactly in the same way for ${\mathcal A}$ on $L^p(\Omega, \ell^2)$ with $\Omega$ an arbitrary domain of $\RR^d$ and the operator is subject to Dirichlet boundary conditions. The Laplacian can also be replaced by a more general elliptic operator with bounded real measurable coefficients. We keep working with $-{\bf \Delta} + {\bf V}$ on $L^p(\RR^d, \ell^2)$ only for simplicity of the exposition and notation. 

Let $d \ge 1$ and let ${\bf V} : \RR^d \to {\mathcal L}(\ell^2)$ be weakly measurable and such that for a.e. $x$, ${\bf V}(x)$ is a symmetric and nonnegative operator on $\ell^2$. The latter property means that   
 for a.e. $x$ and all $\varphi \in \ell^2$, 
 \begin{equation}\label{eq-acc}
 \langle {\bf V}(x) \varphi, \varphi \rangle_{\ell^2} \ge 0.
 \end{equation} 
 We also suppose that ${\bf V}$ is locally integrable in the sense that $\int_{\RR^d} \xi(x) \langle {\bf V}(x) \varphi, \psi \rangle_{\ell^2}\, dx < \infty$ for every $\xi \in C_c^\infty(\RR^d)$ and  $\varphi, \psi \in \ell^2$. 
 
 We define the  form
$$ \fa(u,v) = \int_{\RR^d} \langle \nabla {\bf u}, \nabla {\bf v} \rangle_{\ell^2}\, dx +  \int_{\RR^d} \langle {\bf V} {\bf u},  {\bf v} \rangle_{\ell^2}\, dx = \sum_{j=1}^d \int_{\RR^d} \langle \partial_j {\bf u}, \partial_j {\bf v} \rangle_{\ell^2}\, dx +  \int_{\RR^d} \langle {\bf V} {\bf u},  {\bf v} \rangle_{\ell^2}\, dx $$
with domain
$$D(\fa) = \{ {\bf u} \in W^{1,2}(\RR^d, \ell^2), \  \int_{\RR^d} \langle {\bf V} {\bf u},  {\bf u} \rangle_{\ell^2}\, dx < \infty \}.$$
It is a densely defined, closed and symmetric form.  Hence, it has an associated  operator ${\mathcal A}$ in $L^2(\RR^d, \ell^2)$. This operator is a self-adjoint realization of $-{\bf \Delta} + {\bf V}$. As in the scalar case, we write 
${\mathcal A} = -{\bf \Delta} + {\bf V}$. 
We denote by $(e^{-t\mA})$ the associated semigroup on $L^2(\RR^d, \ell^2)$. This semigroup extends to $L^p(\RR^d, \ell^2)$, $1 \le p < \infty$,  as a strongly continuous and even holomorphic semigroup. This will be obtained as a consequence of the following domination property. Before we state  the result let us denote by $\lambda_{\bf V}(x) = \inf \sigma({\bf V}(x))$ the bottom of the spectrum of ${\bf V}(x)$ as an operator on $\ell^2$. Then of course $\lambda_{\bf V}$ is non-negative and we define as usual the Schr\"odinger operator $H := -\Delta + \lambda_{\bf V}$ on $L^2(\RR^d)$. 
\begin{theorem}\label{thm3.1} Suppose that ${\bf V}$ satisfies the above assumptions. For every $t \ge 0$ and ${\bf f} \in L^2(\RR^d, \ell^2)$ the following pointwise inequalities are  satisfied
\begin{equation}\label{eq3.1}
\norm{e^{-t\mA} {\bf f}(x)}_{\ell^2} \le e^{-tH} \norm{{\bf f}}_{\ell^2}(x) \le  e^{t\Delta} \norm{{\bf f}}_{\ell^2}(x).
\end{equation}
\end{theorem}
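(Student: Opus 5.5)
The plan is to prove the two inequalities in \eqref{eq3.1} by the standard domination criterion for semigroups (Ouhabaz's criterion for domination of semigroups through forms), applied first to the pair $(e^{-t\mA}, e^{-tH})$ and then to the scalar pair $(e^{-tH}, e^{t\Delta})$. For the second inequality we only need that $\lambda_{\bf V}\ge 0$. Since $\lambda_{\bf V}$ is locally integrable (indeed $\lambda_{\bf V}(x) \le \langle {\bf V}(x)e_1,e_1\rangle$), this is the classical domination of Schr\"odinger semigroups with non-negative potentials. It follows from the Trotter--Kato product formula $e^{-tH} = \lim (e^{t\Delta/n} e^{-t\lambda_{\bf V}/n})^n$ (or from the domination criterion, or Feynman--Kac), using $0\le e^{-t\lambda_{\bf V}/n}\le 1$ and the positivity of $e^{t\Delta}$.

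For the first inequality I will use the criterion: $|e^{-t\mA}{\bf f}| \le e^{-tH}|{\bf f}|$ for all ${\bf f}$ holds if and only if two conditions are met. First, ${\bf u}\in D(\fa)$ implies $\norm{{\bf u}}_{\ell^2}\in D(\fb)$, where $\fb$ is the form of $H$ with domain $\{v\in W^{1,2}: \int \lambda_{\bf V}|v|^2<\infty\}$; and $v\in D(\fb)$ with $0\le v\le \norm{{\bf u}}_{\ell^2}$ implies $v\,{\rm sgn}\,{\bf u}\in D(\fa)$, where ${\rm sgn}\,{\bf u} = {\bf u}/\norm{{\bf u}}_{\ell^2}$ on $\{{\bf u}\ne 0\}$ and $0$ elsewhere. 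Second, $\Re\, \fa({\bf u}, v\,{\rm sgn}\,{\bf u}) \ge \fb(\norm{{\bf u}}_{\ell^2}, v)$ for such ${\bf u},v$ with $v\ge 0$. The proof of this criterion in the vector-valued (Hilbert-space-valued) setting is identical to the scalar one, replacing $|\cdot|$ by $\norm{\cdot}_{\ell^2}$.

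The key computations are as follows.
\begin{itemize}
\item For ${\bf u}\in W^{1,2}(\RR^d,\ell^2)$ we have $\norm{{\bf u}}_{\ell^2}\in W^{1,2}$ with $\nabla\norm{{\bf u}}_{\ell^2} = \Re\langle \nabla{\bf u},{\rm sgn}\,{\bf u}\rangle_{\ell^2}$. I will prove this via $\sqrt{\norm{{\bf u}}^2+\epsilon^2}-\epsilon$ and the chain rule, and it gives $|\nabla \norm{{\bf u}}_{\ell^2}|\le \norm{\nabla{\bf u}}_{\ell^2}$.
\item The potential part: $\lambda_{\bf V}\norm{{\bf u}}^2\le\langle{\bf V}{\bf u},{\bf u}\rangle$ pointwise, so the ideal property of domains holds. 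For $v\,{\rm sgn}\,{\bf u}$ we have $\langle {\bf V} v\,{\rm sgn}\,{\bf u}, v\,{\rm sgn}\,{\bf u}\rangle \le \langle{\bf V}{\bf u},{\bf u}\rangle$ because $v\le\norm{{\bf u}}$. Membership of $v\,{\rm sgn}\,{\bf u}$ in $W^{1,2}$ follows by approximating ${\rm sgn}\,{\bf u}$ by ${\bf u}/\sqrt{\norm{{\bf u}}^2+\epsilon^2}$.
\item Gradient term: writing ${\bf w}={\rm sgn}\,{\bf u}$, we have $\partial_j(v{\bf w}) = (\partial_j v){\bf w} + v\,\partial_j{\bf w}$ on $\{{\bf u}\neq0\}$. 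This gives $\Re\langle\partial_j{\bf u},\partial_j(v{\bf w})\rangle = \partial_j v\,\partial_j\norm{{\bf u}} + v\big(\norm{\partial_j{\bf u}}^2 - (\partial_j\norm{{\bf u}})^2\big)/\norm{{\bf u}} \ge \partial_j v\,\partial_j\norm{{\bf u}}$.
\item Potential term: $\langle{\bf V}{\bf u}, v{\bf w}\rangle = v\langle {\bf V}{\bf u},{\bf u}\rangle/\norm{{\bf u}} \ge \lambda_{\bf V}\norm{{\bf u}}v$.
\end{itemize}
Summing these gives the second condition.

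The main obstacle is the rigorous justification of the derivative of ${\rm sgn}\,{\bf u}$ and of the product rule for $v\,{\rm sgn}\,{\bf u}$, which is not in $W^{1,2}$ a priori. I would handle this with the regularized ${\bf w}_\epsilon = {\bf u}/(\norm{{\bf u}}^2+\epsilon^2)^{1/2}$: do the computation for $v{\bf w}_\epsilon$, then pass to the limit $\epsilon\to0$ using dominated convergence and the uniform bound $\norm{\nabla(v{\bf w}_\epsilon)}\le |\nabla v| + 2\norm{\nabla{\bf u}}$ (valid since $v\le\norm{{\bf u}}$). If handling the full criterion proves cumbersome, the alternative is to verify directly that the convex set $\{({\bf f},g): \norm{{\bf f}}_{\ell^2}\le g\}$ in $L^2(\RR^d,\ell^2)\times L^2(\RR^d)$ is invariant under the product semigroup. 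By the invariance criterion for closed convex sets, this reduces to the same form inequality.
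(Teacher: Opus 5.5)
Your proposal is correct and follows essentially the same route as the paper. Both arguments apply the form-domination criterion (the ideal property plus $\Re\,\fa({\bf u},|h|\,{\rm sgn}\,{\bf u})\ge\fb(\norm{{\bf u}}_{\ell^2},|h|)$), verified via the regularization ${\bf u}/\sqrt{\norm{{\bf u}}_{\ell^2}^2+\epsilon}$, Cauchy--Schwarz for the gradient terms, and $\langle{\bf V}{\bf u},{\bf u}\rangle_{\ell^2}\ge\lambda_{\bf V}\norm{{\bf u}}_{\ell^2}^2$ for the potential term, with the second inequality treated as classical; your dominated-convergence limit using the bound $|\nabla v|+2\norm{\nabla{\bf u}}_{\ell^2}$ is just a slightly cleaner replacement for the paper's weak-compactness step.
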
 
Before we discuss the proof we make some observations. 
One can define a magnetic Schr\"odinger operator $(\nabla -i {\bf a})^*(\nabla -i {\bf a}) + {\bf V}$ as an operator on $L^2(\RR^d, \ell^2)$ and prove the diamagnetic inequality. That is,  the associated semigroup is dominated by the Sch\"odinger semigroup 
$e^{-tH}$ as in \eqref{eq3.1}.  This is proved in \cite{ALLOR} in the setting of  $L^2(\RR^d, \CC^m)$ for any  $m \in \NN$. One can either repeat the arguments from \cite{ALLOR} or use an approximation argument since  $m$ does not play any role. Some other related results to Theorem \ref{thm3.1} are also given in \cite{ALLR} in the setting of operators on $L^2(\RR^d, \CC^m)$. We also point out that the symmetry assumption of ${\bf V}(x)$ can be removed. What is needed is that  $\lambda_{\bf V}(x)$ is a function such that $\Re\, \langle {\bf V}(x) \varphi, \varphi \rangle_{\ell^2} \ge \lambda_{\bf V}(x) \norm{\varphi}_{\ell^2}^2$ for all 
$\varphi \in \ell^2$. The statement and proof of the theorem remain valid in many other settings such as operators on domains or on manifolds. 

\begin{proof}
The second inequality in \eqref{eq3.1} is a very classical result and hence we confine our discussion to the first domination property. 

The proof is based on criteria for domination of semigroups, see \cite{Ouh96}, \cite{Ouh} or \cite{Ouh05}, Theorem~2.30. Denote by $\fb$ the sesquilinear form of the operator $H$, that is
$$ \fb(u,v) = \sum_{j=1}^d \int_{\RR^d} \partial_j u. {\overline \partial_j v} \, dx + \int_{\RR^d} \lambda_{\bf V} u {\overline v}\, dx$$
for $u, v$  in the domain $ D(\fb)$ of $\fb$ which given by 
$$D(\fb) = \{ u \in W^{1,2}(\RR^d), \ \int_{\RR^d} \lambda_{\bf V} | u |^2 \, dx < \infty \}.$$
In order to apply the domination criteria, we have to prove that 
\begin{enumerate}[\rm (a)]
\item
if ${\bf u}\in D(\fa)$, then $\norm{\bf u}_{\ell^2} \in D(\mathfrak{b})$,
\item
if ${\bf u} \in D(\mathfrak{a})$ and $h \in D(\mathfrak{b})$ is such that $|h|\le\norm{\bf u}_{\ell^2} $, then $|h| sign{\bf u}\in D(\mathfrak{a})$ and
\begin{equation}\label{eq3.2}
		\Re\,  \mathfrak{a}({\bf u},|h| sign{\bf u}) \ge \mathfrak{b}(\norm{\bf u}_{\ell^2},|h|).
\end{equation}
\end{enumerate}
We recall that $sign{\bf u}(x) = \frac{{ {\bf u}}(x)}{\norm{\bf u(x)}_{\ell^2}}$ if ${\bf u}(x)  \not = 0$ and $0$ otherwise. 

The proof of these two properties does not differ much from the scalar case, and hence we will skip some details.

Set for $\epsilon > 0$, ${\bf g}_\epsilon  = | h | \frac{ {\bf u}}{\sqrt{ \norm{\bf u(x)}_{\ell^2}^2 + \epsilon}}$. Then 
$$ \partial_j {\bf g}_\epsilon = (\partial_j |h|) \frac{ {\bf u}}{\sqrt{ \norm{\bf u}_{\ell^2}^2 + \epsilon}} +  |h| \frac{ \partial_j {\bf u}}{\sqrt{ \norm{\bf u}_{\ell^2}^2 + \epsilon}} - |h| \frac{ {\bf u}}{ (\norm{\bf u}_{\ell^2}^2 + \epsilon)^{3/2}} \Re\, \langle \partial_j u, u \rangle_{\ell^2}.$$
From this it is easy to see that $\fa({\bf g}_\epsilon, {\bf g}_\epsilon)$ is uniformly bounded with respect to $\epsilon$. Therefore, after extracting a subsequence,  ${\bf g}_\epsilon$ converges weakly in $D(\fa)$ to some ${\bf v}$. Taking the pointwise limit of ${\bf g}_\epsilon$ shows that $|h| sign{\bf u} = {\bf v} \in D(\fa)$. The same idea gives the the expression of the partial derivative
$$ \partial_j (|h| sign{\bf u}) = (\partial_j |h|) sign{\bf u} +  |h| \frac{ \partial_j {\bf u}}{{\norm{\bf u}_{\ell^2}}}\eins_{\{{\bf u} \not= 0\}}  - |h| \frac{ {\bf u}}{ \norm{\bf u}_{\ell^2}^3} \eins_{\{{\bf u} \not= 0\}} \Re\, \langle \partial_j u, u \rangle_{\ell^2}.$$
Using a similar approximation argument, the weak derivative of ${\norm{\bf u}_{\ell^2}}$ for $u \in D(\fa)$ is given by
$$ \partial_j {\norm{\bf u}_{\ell^2}} = \Re\, \langle \partial_j {\bf u}, sign{\bf u} \rangle_{\ell^2}.$$
Putting all this information together, $\Re\,  \mathfrak{a}({\bf u},|h| sign{\bf u})$ is the sum of four terms $T_1, \cdots, T_4$ with
\begin{equation}\label{eq3.3}
T_1 = \Re\, \sum_{j=1}^d \int_{\RR^d}  \langle \partial_j {\bf u}, (\partial_j |h|) sign{\bf u} \rangle_{\ell^2} \, dx = \sum_{j=1}^d \int_{\RR^d} \partial_j {\norm{\bf u}_{\ell^2}} \partial_j |h|\, dx.
\end{equation}
The second term satisfies 
\begin{equation}\label{eq3.4}
T_2 =  \sum_{j=1}^d \int_{\RR^d} \norm{\partial_j{\bf u}}_{\ell^2}^2 \frac{|h|}{\norm{\bf u}_{\ell^2}}\eins_{\{{\bf u} \not= 0\}}\, dx.
\end{equation}
Next,
\begin{equation}\label{eq3.5}
T_3 = -  \sum_{j=1}^d \int_{\RR^d}  (\Re\, \langle \partial_j{\bf u}, sign{\bf u} \rangle_{\ell^2} )^2  \frac{|h|}{\norm{\bf u}_{\ell^2}}\eins_{\{{\bf u} \not= 0\}}\, dx
\end{equation}
and 
%\begin{equation}\label{eq3.6}
\begin{eqnarray}
T_4 &=&  \int_{\RR^d}  \langle {\bf V}{\bf u}, sign{\bf u} |h| \rangle_{\ell^2}\, dx  =  \int_{\RR^d}  \langle {\bf V}{\bf u}, {\bf u} \rangle_{\ell^2} \frac{|h |}{\norm{\bf u}_{\ell^2}} \eins_{\{{\bf u} \not= 0\}}\, dx \nonumber \\
&\ge& \int_{\RR^d} \lambda_{{\bf V}} {\norm{\bf u}_{\ell^2}} | h|\, dx. \label{eq3.6}
%\end{equation}
\end{eqnarray}
The Cauchy-Schwarz inequality (in $\ell^2$) clearly implies that  $T_2 + T_3 \ge 0$. Therefore, the desired inequality in \eqref{eq3.2} follows from this together with \eqref{eq3.3} and \eqref{eq3.5}.
\end{proof} 

The domination property
$$ \norm{e^{-t\mA} {\bf f}}_{\ell^2}(x) \le  e^{t\Delta} \norm{{\bf f}}_{\ell^2}(x)$$
from the previous theorem shows that for all $t > 0$ and $p \in [1, \infty]$, $e^{-t\mA} $ extends to a contraction operator on $L^p(\RR^d, \ell^2)$. It also implies $L^p-L^q$, $1 \le p  \le q \le \infty$,  smoothing properties
\begin{equation}\label{eq3.7}
\norm{e^{-t\mA} {\bf f}}_{L^q(\RR^d, \ell^2)} \le C\, t^{-\frac{d}{2} (\frac{1}{p}-\frac{1}{q})} \norm{{\bf f}}_{L^p(\RR^d, \ell^2)}
\end{equation}
for all $t > 0$ and ${\bf f} \in L^p(\RR^d, \ell^2)$. In particular, the semigroup $e^{-t\mA} $ is given by a kernel ${\bf p}(t,x,y)$ (the heat kernel of ${\mathcal A}$) in the sense that
$$ e^{-t\mA} {\bf f}(x) = \int_{\RR^d}  {\bf p}(t,x,y) {\bf f}(y)\, dy $$ 
and  ${\bf p}(t,x,y)$ defines  a bounded operator on $\ell^2$. See the Appendix.   The above domination provides  the following Gaussian estimate for its operator norm. 

\begin{corollary}\label{cor3.2}
We have for all $t > 0$ and a.e. $x, y \in \RR^d$
\begin{equation}\label{eq3.8}
\norm{{\bf p}(t,x,y)}_{{\mathcal L}(\ell^2)} \le (4 \pi)^{-\frac{d}{2}}\, t^{-\frac{d}{2}} \exp\{ - \frac{| x- y |^2}{4t} \}.
\end{equation}
Here $|x-y|$ denotes the usual Euclidean distance in $\RR^d$. 
\end{corollary}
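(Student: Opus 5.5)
The estimate will come from the domination \eqref{eq3.1} of Theorem~\ref{thm3.1}, by testing the kernel against vectors and separable test functions. Fix $t>0$ and vectors $\varphi,\psi\in\ell^2$. For ${\bf f}=g\varphi$ with $0\le g\in L^1(\RR^d)\cap L^\infty(\RR^d)$, and any $0\le h \in L^1\cap L^\infty$, \eqref{eq3.1} gives
\[
\Bigl|\int\!\!\int h(x) \langle {\bf p}(t,x,y)\varphi,\psi\rangle_{\ell^2} g(y)\,dy\,dx\Bigr| \le \norm{\varphi}_{\ell^2}\norm{\psi}_{\ell^2}\int\!\!\int h(x)\, G_t(x-y)\, g(y)\,dy\,dx,
\]
where $G_t(z)=(4\pi t)^{-d/2}e^{-|z|^2/4t}$ is the heat kernel of $e^{t\Delta}$. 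Indeed, the left side equals $\bigl|\int h(x)\langle e^{-t\mA}{\bf f}(x),\psi\rangle\,dx\bigr|$. This is bounded by $\int h\,\norm{e^{-t\mA}{\bf f}}_{\ell^2}\norm{\psi}_{\ell^2}$, and \eqref{eq3.1} bounds that by $\norm{\psi}_{\ell^2}\int h\, e^{t\Delta}(g\norm{\varphi}_{\ell^2})$.

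Next I would replace $g,h$ by complex functions. Write $k_{\varphi,\psi}(x,y)=\langle {\bf p}(t,x,y)\varphi,\psi\rangle$, which is a scalar kernel in $L^\infty_{\rm loc}$ by the kernel representation from the Appendix. The inequality above says that the positive operator with kernel $\norm{\varphi}\norm{\psi}G_t$ dominates the scalar operator with kernel $k_{\varphi,\psi}$ in the sense $|\langle K g,h\rangle|\le \langle |g|, \tilde K |h|\rangle$. This extends to complex $g,h$: I would apply the positive-function bound to the real and imaginary kernel parts after splitting $g,h$, or more cleanly use the pointwise bound $\norm{e^{-t\mA}(g\varphi)}\le e^{t\Delta}|g|\,\norm{\varphi}$, which already holds for complex $g$ by \eqref{eq3.1}. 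Taking $g=\eins_{B(y_0,r)}/|B|$ and $h=\eins_{B(x_0,r)}\,\overline{\mathrm{sgn}(\cdot)}/|B|$ and letting $r\to0$ at Lebesgue points of the product space (Lebesgue differentiation in $\RR^{2d}$, or duality $L^1(\RR^{2d})$ versus $L^\infty$) yields $|k_{\varphi,\psi}(x,y)|\le \norm{\varphi}\norm{\psi}G_t(x-y)$ for a.e. $(x,y)$.

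The main obstacle is the null set, which depends on $(\varphi,\psi)$. I would handle it by taking countable dense subsets $D\subset\ell^2$ (say finite rational combinations of basis vectors) and discarding the countable union of exceptional null sets. Then for a.e. $(x,y)$ the bound holds for all $\varphi,\psi\in D$. Since ${\bf p}(t,x,y)\in\mathcal L(\ell^2)$ and $D$ is dense, taking the supremum over unit-approximating $\varphi,\psi\in D$ gives $\norm{{\bf p}(t,x,y)}_{\mathcal L(\ell^2)}=\sup|\langle{\bf p}\varphi,\psi\rangle|\le G_t(x-y)$, which is \eqref{eq3.8}. If needed, continuity in $(x,y)$ could upgrade this to every $(x,y)$, but the statement only asks for a.e.
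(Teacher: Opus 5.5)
Your proof is correct and follows essentially the paper's route. You test the domination \eqref{eq3.1} against simple vector-valued data, localize to get a pointwise kernel bound off a null set that depends on the test vectors, and remove that dependence using the separability of $\ell^2$. The differences are bookkeeping: you bound the coefficients $\langle {\bf p}(t,x,y)\varphi,\psi\rangle_{\ell^2}$ directly by Lebesgue differentiation over $B(x_0,r)\times B(y_0,r)$ in $\RR^{2d}$, which makes the joint a.e.\ statement in $(x,y)$ more transparent than in the paper (the paper tests only in $y$, passes through $\norm{{\bf p}(t,x,y){\bf f}(y)}_{\ell^2}\le G(t,x,y)\norm{{\bf f}}_{\ell^2}(y)$, and then takes ${\bf f}=\psi\eins_\Omega$); and the phase factor $\overline{\mathrm{sgn}(\cdot)}$ in your $h$ is superfluous, since nonnegative $g,h$ already give averages whose moduli converge to $|k_{\varphi,\psi}(x_0,y_0)|$.
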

In the scalar case, an  elementary  argument  allows one to  convert the domination into comparison of the corresponding kernels. In the vector-valued setting this does seem to be immediate. So we give details of proof. 
\begin{proof}
Let us denote by $G(t,x,y) = (4 \pi)^{-\frac{d}{2}}\, t^{-\frac{d}{2}} \exp\{ - \frac{| x- y |^2}{4t} \}$ the Gaussian kernel. Let  $\varphi \in \ell^2$ and $h$ a nonnegative and bounded function  on $\RR^d$. Theorem~\ref{thm3.1} gives for ${\bf f} \in L^1(\RR^d, \ell^2) \cap L^2(\RR^d, \ell^2)$, 
$$ | \langle e^{-t\mA} (h {\bf f})(x), \varphi \rangle_{\ell^2} | \le e^{t \Delta} (h \norm{{\bf f}}_{\ell^2})(x) \norm{\varphi}_{\ell^2}$$
and hence
$$ | \langle \int_{\RR^d}  {\bf p}(t,x,y) {\bf f}(y) h(y)\, dy, \varphi \rangle_{\ell^2} | \le \int_{\RR^d} \norm{\varphi}_{\ell^2} G(t,x,y)  \norm{{\bf f}}_{\ell^2} (y) h(y)\, dy.$$
Then a simple application of Fubini's theorem implies that 
$$ | \int_{\RR^d}  \langle {\bf p}(t,x,y) {\bf f}(y) , \varphi \rangle_{\ell^2} h(y)\, dy | \le \int_{\RR^d} \norm{\varphi}_{\ell^2} G(t,x,y)  \norm{{\bf f}}_{\ell^2} (y) h(y)\, dy.$$
Since this is true for all nonnegative $h$, it follows that 
$$ |\langle {\bf p}(t,x,y) {\bf f}(y) , \varphi \rangle_{\ell^2} | \le \norm{\varphi}_{\ell^2} G(t,x,y)  \norm{{\bf f}}_{\ell^2} (y).$$
This inequality is in the a.e. sense but the null set on which it fails  may depend on $\varphi$. Since $\ell^2$ is separable we apply this to a countable dense set, and then take the union of the corresponding null sets to conclude that  
$$ \norm{{\bf p}(t,x,y) {\bf f}(y)}_{\ell^2} \le G(t,x,y)  \norm{{\bf f}}_{\ell^2} (y).$$
This latter estimate can be converted into operator norm estimate. To do so, let $\psi \in \ell^2$ and $\Omega$ be a bounded domain of $\RR^d$.  Define 
${\bf f} =  \psi \eins_\Omega$. Then ${\bf f} \in L^2(\RR^d, \ell^2)$ and the previous estimate gives
$$ \norm{\eins_\Omega(y) {\bf p}(t,x,y) \psi }_{\ell^2} \le G(t,x,y)  \norm{\psi}_{\ell^2}.$$
Since this holds for all $\psi \in \ell^2$ it follows that 
$$\norm{\eins_\Omega(y){\bf p}(t,x,y)}_{{\mathcal L}(\ell^2)} \le G(t,x,y).$$
This proves the corollary  since $\Omega$ was an arbitrary  domain of $\RR^d$. 
\end{proof}

As in the scalar case, the Gaussian bound of Corollary~\ref{cor3.2} extends to complex time. Recall that since ${\mathcal A}$ is self-adjoint, its semigroup is bounded holomorphic with angle $\frac{\pi}{2}$ on $L^2(\RR^d, \ell^2)$. The kernel of $e^{-z{\mA}}$ 
is ${\bf p}(z,x,y)$ and we have the following estimate. 
\begin{corollary}\label{cor3.3}
There exist  constants $C$, independent of  ${\bf V}$  such that for all  $z \in \CC$ with $\Re\, z > 0$,
\begin{equation}\label{eq3.9}
\norm{{\bf p}(z,x,y)}_{{\mathcal L}(\ell^2)} \le C\, (\Re\, z)^{-\frac{d}{2}} \exp\{ - \, \Re\, \frac{| x- y |^2}{8z} \}.
\end{equation}
\end{corollary}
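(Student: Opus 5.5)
We prove Corollary~\ref{cor3.3} with the classical scalar argument (Davies' method via Phragm\'en--Lindel\"of), applied to scalar functions extracted from the operator-valued kernel. The aim is to reduce everything to scalar holomorphic functions of $z$ in the right half-plane, where the interpolation arguments of the scalar theory apply verbatim.

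\emph{Step 1: scalar functions.} Fix $\varphi,\psi\in\ell^2$ of norm one and bounded sets $\Omega_1,\Omega_2\subset\RR^d$. Set ${\bf f}=\eins_{\Omega_2}\psi$, ${\bf g}=\eins_{\Omega_1}\varphi$ and
\[
F(z)=\langle e^{-z\mA}{\bf f},{\bf g}\rangle_{L^2(\RR^d,\ell^2)} .
\]
This function is holomorphic in $\Re\, z>0$ because $\mA$ is self-adjoint and non-negative.

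\emph{Step 2: the two bounds.} On the real axis, Corollary~\ref{cor3.2} gives
\[
|F(t)|\le (4\pi t)^{-d/2} e^{-\rho^2/4t}\,|\Omega_1|\,|\Omega_2|, \qquad \rho={\rm dist}(\Omega_1,\Omega_2).
\]
Using \eqref{eq3.7} with $p=1$, $q=2$ and the semigroup property, we get $\norm{e^{-z\mA}}_{L^1-L^\infty}\le C(\Re\, z)^{-d/2}$ for complex $z$. Indeed, write $e^{-z\mA}=e^{-\frac{s}{2}\mA}e^{-(z-s)\mA}e^{-\frac{s}{2}\mA}$ with $s=\Re\, z$. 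The outer factors map $L^1\to L^2$ and $L^2\to L^\infty$ with norm $Cs^{-d/4}$ each, by \eqref{eq3.7} and duality. The middle factor is an $L^2$ contraction. This yields the global bound
\[
|F(z)|\le C(\Re\, z)^{-d/2}|\Omega_1|^{\einhalb\cdot 2}\cdots,
\]
or more precisely $|F(z)|\le C(\Re\, z)^{-d/2}\norm{{\bf f}}_1\norm{{\bf g}}_1$.

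\emph{Step 3: Phragm\'en--Lindel\"of.} We then invoke the standard complex-analysis lemma, as in Davies' book or Ouhabaz's book (the lemma used to prove the scalar Gaussian bound in complex time). If $F$ is holomorphic on the right half-plane with $|F(z)|\le B(\Re\, z)^{-\beta}$ there and $|F(t)|\le Bt^{-\beta}e^{-\rho^2/4t}$ on $(0,\infty)$, then
\[
|F(z)|\le C_\beta B(\Re\, z)^{-\beta}\exp\bigl(-\Re\,\tfrac{\rho^2}{4(1+\epsilon)z}\bigr),
\]
and we take $\epsilon=1$ to get the constant $8$. This is the main (but classical) step. The constants depend only on $d$ and not on ${\bf V}$, because only \eqref{eq3.7} and Corollary~\ref{cor3.2} enter, and both are ${\bf V}$-independent.

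\emph{Step 4: back to the kernel.} Dividing by $|\Omega_1||\Omega_2|$ and shrinking $\Omega_1,\Omega_2$ to balls around $x$ and $y$ gives, by Lebesgue differentiation,
\[
|\langle{\bf p}(z,x,y)\psi,\varphi\rangle_{\ell^2}|\le C(\Re\, z)^{-d/2}e^{-\Re\,|x-y|^2/8z}
\]
for a.e.\ $x,y$. Here $\rho\to|x-y|$, and we absorb the small loss into the constant by first using slightly shrunk distances. The exceptional null set depends on $\varphi,\psi$, so we take $\varphi,\psi$ in a countable dense subset of $\ell^2$, exactly as in the proof of Corollary~\ref{cor3.2}. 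Taking the supremum over unit vectors then gives the operator-norm bound \eqref{eq3.9}.
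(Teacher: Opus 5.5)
Your proof is correct and follows the paper's argument: the complex-time $L^1\to L^\infty$ bound obtained from $e^{-z\mA}=e^{-\frac{s}{2}\mA}e^{-i(\mathrm{Im}\, z)\mA}e^{-\frac{s}{2}\mA}$ and \eqref{eq3.7}, the real-time bound of Corollary~\ref{cor3.2}, and the same Phragm\'en--Lindel\"of lemma (Lemma~6.18 in Ouhabaz's book). The only difference is that you apply the lemma to the averaged pairing $\langle e^{-z\mA}(\eins_{\Omega_2}\psi),\eins_{\Omega_1}\varphi\rangle$ and then recover the pointwise bound by Lebesgue differentiation and a countable dense set, whereas the paper applies it directly to $z\mapsto\langle {\bf p}(z,x,y)\varphi,\psi\rangle$ at fixed $(x,y)$; your version is slightly more careful about holomorphy in $z$ and about null sets.
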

\begin{proof} Let $\varphi, \psi \in \ell^2$ and consider the function
$$F(z) = \langle {\bf p}(z,x,y) \varphi, \psi \rangle_{\ell^2}.$$
Set $z= t+is$, and observe that by self-adjointness $\norm{e^{-is{\mathcal A}}}_{L^2(\RR^d, \ell^2)} \le 1$. Therefore, using $e^{-z{\mathcal A}} = e^{-\frac{t}{2}{\mathcal A}} e^{-is{\mathcal A}} e^{-\frac{t}{2}{\mathcal A}}$ and 
\eqref{eq3.7} we see that $\norm{e^{-z {\mathcal A}}}_{L^1(\RR^d, \ell^2)-L^\infty(\RR^d, \ell^2)} \le C_1\, (\Re\, z)^{-\frac{d}{2}}$. Hence 
$$| F(z) | \le C_1\, (\Re\, z)^{-\frac{d}{2}} \norm{\varphi}_{\ell^2} \norm{\psi}_{\ell^2}, \quad \Re\, z > 0.$$
The  Gaussian upper bound gives
$$ |F(t)| \le (4 \pi)^{-\frac{d}{2}}\, t^{-\frac{d}{2}} \exp\{ - \frac{| x- y |^2}{4t} \} \norm{\varphi}_{\ell^2} \norm{\psi}_{\ell^2} \quad t > 0.$$
Now we apply \cite{Ouh05}, Lemma 6.18.
\end{proof}
We can trace the constants $C$ in the corollary but this will not be of interest to us here. Also the explicit constant $\frac{1}{8}$ in the exponential decay will not play any role, any other constant $c > 0$ will be enough for our purpose. 

An interesting  consequence of the Gaussian bound for complex time is the holomorphy of the semigroup. More precisely,

\begin{corollary}\label{cor3.4}
The semigroup $(e^{-t {\mA}})$ is bounded holomorphic of angle $\frac{\pi}{2}$ on $L^p(\RR^d, \ell^2)$ for all $p \in [1, \infty)$.
\end{corollary}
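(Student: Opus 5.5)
The plan is to use the complex-time Gaussian bound of Corollary~\ref{cor3.3} to get uniform $L^p(\RR^d,\ell^2)$ bounds for $e^{-z\mA}$ on sectors, and then add strong continuity at $0$. Fix $\theta \in (0,\frac{\pi}{2})$ and $z$ in the sector $\Sigma_\theta = \{ z \neq 0 : |\arg z| < \theta\}$. For such $z$ we have $\Re\, z \ge \cos\theta\, |z|$. We also have
\[
\Re\, \frac{1}{z} = \frac{\Re\, z}{|z|^2} \ge \frac{\cos\theta}{|z|} \ge \frac{\cos^2 \theta}{\Re\, z}.
\]
Hence \eqref{eq3.9} yields, writing $t = \Re\, z$,
\[
\norm{{\bf p}(z,x,y)}_{{\mathcal L}(\ell^2)} \le C\, t^{-\frac{d}{2}} \exp\Big\{ - \frac{\cos^2\theta\, |x-y|^2}{8 t} \Big\}.
\]
The right-hand side is a scalar Gaussian $g_{\theta,t}(x-y)$ whose $L^1(\RR^d)$-norm is bounded by a constant $C_\theta$ independent of $t$, and also independent of ${\bf V}$.

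Next comes the uniform bound on $L^p(\RR^d,\ell^2)$. For ${\bf f} \in L^1 \cap L^2(\RR^d,\ell^2)$ I estimate pointwise
\[
\norm{e^{-z\mA}{\bf f}(x)}_{\ell^2} \le \int_{\RR^d} \norm{{\bf p}(z,x,y)}_{{\mathcal L}(\ell^2)} \norm{{\bf f}(y)}_{\ell^2}\, dy \le \big(g_{\theta,t} * \norm{{\bf f}}_{\ell^2}\big)(x).
\]
Young's inequality then gives $\norm{e^{-z\mA}{\bf f}}_{L^p(\RR^d,\ell^2)} \le C_\theta \norm{{\bf f}}_{L^p(\RR^d,\ell^2)}$ for all $p \in [1,\infty]$. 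By density, $e^{-z\mA}$ extends to $L^p(\RR^d,\ell^2)$ with $\sup_{z \in \Sigma_\theta} \norm{e^{-z\mA}}_{p-p} < \infty$ for every $\theta < \frac{\pi}{2}$. The extensions are consistent across $p$ because they agree on the dense set $L^1 \cap L^2$. The semigroup law $e^{-(z+w)\mA} = e^{-z\mA}e^{-w\mA}$ also passes from $L^2$ to $L^p$ by density.

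It remains to check holomorphy on $L^p$ and strong continuity at $0$. For holomorphy, note that for ${\bf f} \in L^p \cap L^2$ and ${\bf g} \in L^{p'} \cap L^2$ the map $z \mapsto \langle e^{-z\mA}{\bf f}, {\bf g}\rangle$ is holomorphic on $\Sigma_\theta$, since the semigroup is holomorphic on $L^2$. Combining this with the uniform bound and density gives weak, hence strong, holomorphy on $L^p$ (the standard Vitali-type argument). For strong continuity on $L^p$, $1 \le p < \infty$, the domination \eqref{eq3.1} already shows that $(e^{-t\mA})$ is contractive. So it suffices to prove $e^{-t\mA}{\bf f} \to {\bf f}$ in $L^p$ for ${\bf f}$ in a dense set, say ${\bf f} \in L^1 \cap L^\infty(\RR^d,\ell^2)$. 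For such ${\bf f}$ we have convergence in $L^2$. When $p<2$, I interpolate between $L^2$ convergence and the uniform $L^1$ bound on a large ball, and control the tails outside the ball by the domination $\norm{e^{-t\mA}{\bf f}}_{\ell^2} \le e^{t\Delta}\norm{{\bf f}}_{\ell^2}$, which gives uniform tightness. When $p>2$, I interpolate between $L^2$ convergence and the uniform $L^\infty$ bound.

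I expect the main obstacle to be this last step for $p=1$, where interpolation is unavailable. Here I would use the domination plus tightness argument: the family $\{\norm{e^{-t\mA}{\bf f}}_{\ell^2}, t\le 1\}$ is dominated by $e^{t\Delta}\norm{{\bf f}}_{\ell^2}$, which converges in $L^1$. This makes the family uniformly integrable. Since $e^{-t\mA}{\bf f} \to {\bf f}$ in $L^2$, and hence in measure, Vitali's theorem gives convergence in $L^1(\RR^d,\ell^2)$.
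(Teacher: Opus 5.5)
Your proposal is correct and follows essentially the paper's route, which defers to the scalar argument. In both, the complex-time Gaussian bound of Corollary~\ref{cor3.3} gives uniform $L^p(\RR^d,\ell^2)$ bounds on each sector $\Sigma_\theta$ with $\theta<\frac{\pi}{2}$, and holomorphy on $L^2(\RR^d,\ell^2)$ is then transferred to $L^p$ by density and Vitali's theorem. Your explicit proof of strong continuity at $0$ via the domination \eqref{eq3.1} fills in a detail the paper leaves to the references; continuity along subsectors then follows from holomorphy and the uniform sectorial bound.
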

The proof is the same as in the scalar case in \cite{Ouh95} or \cite{Ouh05}, Theorem~6.16. It uses the previous corollary, holomorphy on $L^2(\RR^d, \ell^2)$ and Vitali's theorem. We do not repeat the arguments here. 

%%%%%%%%%%%%%%%%%%%%%%%%%%%%%%%%%%%%%%%%%%%%%%%%%%%%%
\section{Riesz transforms of vector-valued Schr\"odinger operators}\label{sec4}

The aim of this section is to prove that the Riesz transform is bounded on $L^p(\RR^d, \ell^2)$ for $p \in (1, 2]$. It is tempting to follow the ideas in Section~\ref{sec2-2} for the scalar case but this is not clear because  the $L^1$-maximal inequality \eqref{eq2.3} is not known in our vector-valued setting for general nonnegative symmetric potentials ${\bf V}$. This inequality was studied in \cite{ALLR} and \cite{ALLOR} in $L^1(\RR^d, \RR^m)$. It is proved in \cite{ALLOR}  that if the smallest and largest eigenvalues $\lambda_{\bf V}$ and $\Lambda_{\bf V}$ of the matrix ${\bf V}$ are proportional, then \eqref{eq2.3} holds  in $L^1(\RR^d, \RR^m)$. In the setting of the present paper, we have by the same method as in \cite{ALLOR},  the maximal inequality in $L^1(\RR^d, \ell^2)$ if the potential ${\bf V}$ satisfies the pointwise inequality 
\begin{equation}\label{eq4.1}
 \lambda_{\bf V}(x) {\norm{\varphi}}_{\ell^2}^2 \le \langle {\bf V}(x) \varphi, \varphi \rangle_{\ell^2} \le \kappa \lambda_{\bf V}(x) \norm{\varphi}_{\ell^2}^2
 \end{equation}
for some constant $\kappa$ and some nonnegative function $\lambda_{\bf V}$. %This condition coincides with the one in \cite{ALLOR} in the setting of $L^1(\RR^d, \RR^m)$. 

For general ${\bf V}$, we  follow a different strategy and we rely on singular integral operators as in \cite{Coulhon-Duong} for the Riesz transform of the Laplace-Beltrami operator on manifolds. The recent paper \cite{HO}, which extends the approach of \cite{Duong-McIntosh}  for singular integral operators to vector-valued settings, will be useful to us here.  

As in the previous section, we consider ${\bf V}: \RR^d \to \ell^2$,   locally integrable, ${\bf V}(x)$ is symmetric and nonnegative. We again denote by ${\mA} = -{\bf \Delta} + {\bf V}$ the Schr\"odinger operator on $L^2(\RR^d, \ell^2)$. Then we have 

\begin{theorem}\label{thm4.1}
Let  $p \in (1, 2]$. Then the Riesz transform $\nabla {\mA}^{-\einhalb}$ is bounded on $L^p(\RR^d, \ell^2)$ and its norm is bounded by a constant independent of ${\bf V}$. If $p = 1$, then $\nabla {\mA}^{-\einhalb}$ is of weak type $(1,1)$ with a weak type bound independent of ${\bf V}$. 
\end{theorem}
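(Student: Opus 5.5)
We follow the Coulhon--Duong strategy for Riesz transforms on manifolds, adapted to the vector-valued setting through Theorem~\ref{thm:main} and Proposition~\ref{prop:prop2-1}. The $L^2$ bound is immediate from the form. For ${\bf f}$ in the range of ${\mA}^{\einhalb}$,
\[
\norm{\nabla {\mA}^{-\einhalb}{\bf f}}_{L^2(\RR^d,\ell^2)}^2 \le \fa({\mA}^{-\einhalb}{\bf f},{\mA}^{-\einhalb}{\bf f}) = \norm{{\bf f}}_{L^2(\RR^d,\ell^2)}^2,
\]
and the bound does not depend on ${\bf V}$. Once the weak type $(1,1)$ estimate is proved with a constant independent of ${\bf V}$, Marcinkiewicz interpolation gives boundedness on $L^p(\RR^d,\ell^2)$ for $p\in(1,2]$ with constants independent of ${\bf V}$. (If needed, replace ${\mA}$ by ${\mA}+\epsilon$ and let $\epsilon\to 0$.)

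For weak type $(1,1)$ we apply Theorem~\ref{thm:main} with $p=q=2$, $p_0=a=1$, $E=\ell^2$, $F=(\ell^2)^d$, $T=\nabla{\mA}^{-\einhalb}$, $S=T$, and $A_r = I-(I-e^{-r^2{\mA}})^m$ for a fixed integer $m > d/4$ (or $m=1$ if that suffices). Expanding the binomial, $A_r$ is a combination of $e^{-kr^2{\mA}}$, $k=1,\dots,m$. By Corollary~\ref{cor3.2} these have Gaussian kernels in ${\mathcal L}(\ell^2)$-norm, uniformly in ${\bf V}$, so the off-diagonal bound \eqref{eq:At-estimate} holds with $p_0=1$. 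It then remains to verify hypothesis (H2), i.e.\ \ref{prop2-1-3} of Proposition~\ref{prop:prop2-1}. We check it in the form \ref{prop2-1-2}, on vectors $u\in\ell^2$ and not in operator norm, which is exactly the strong form the paper set up. Writing
\[
{\mA}^{-\einhalb}(I-e^{-r^2{\mA}})^m = c\int_0^\infty (I-e^{-r^2{\mA}})^m e^{-t{\mA}}\,\frac{dt}{\sqrt t},
\]
the kernel of $T-TA_r$ is $c\int_0^\infty \nabla_x {\bf g}_{r,t}(x,y)\,t^{-1/2}\,dt$. Here ${\bf g}_{r,t}$ is a finite alternating sum of heat kernels ${\bf p}(t+kr^2,x,y)$, and for $t\le r^2$ it is bounded by a Gaussian, while for $t\ge r^2$ the cancellation gives an extra factor $(r^2/t)^m$ through holomorphy (Corollaries~\ref{cor3.3}, \ref{cor3.4}). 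We therefore need, for all $u\in\ell^2$,
\[
\int_{|x-y|\ge r}\norm{\nabla_x {\bf g}_{r,t}(x,y)u}\,dx \le C\,\min\bigl(1,(r^2/t)^m\bigr)\,e^{-c r^2/t}\,t^{-1/2}\norm{u}_{\ell^2}.
\]
Integrating this in $t$ against $t^{-1/2}dt$ gives a quantity bounded uniformly in $r$ and ${\bf V}$.

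The main obstacle is a weighted $L^2$ gradient estimate for the heat kernel, uniform in ${\bf V}$:
\[
\int_{\RR^d}\norm{\nabla_x {\bf p}(t,x,y)u}_{\ell^2}^2\, e^{\gamma|x-y|^2/t}\,dx \le C\,t^{-d/2-1}\norm{u}_{\ell^2}^2 .
\]
The plan is Grigor'yan's / Coulhon--Duong's energy argument. Fix $u$ and set ${\bf w}(t,x)={\bf p}(t,x,y)u$; this vector function does lie in $D(\fa)$, whereas ${\bf p}(t,\cdot,y)$ itself does not. Differentiate $\int \norm{{\bf w}}^2 e^{\gamma|x-y|^2/t}$ and $\int\norm{\nabla{\bf w}}^2 e^{\gamma|x-y|^2/t}$ in $t$. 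In $\fa$, the potential term $\langle{\bf V}{\bf w},{\bf w}\rangle\ge0$ has the favourable sign and can simply be discarded. The Gaussian bound of Corollary~\ref{cor3.2} controls the weighted $L^2$ norm of ${\bf w}$. The time derivative is controlled using $\partial_t{\bf w} = -{\mA}{\bf w}$, the Davies--Gaffney type bound, and the complex-time estimate of Corollary~\ref{cor3.3}. A simpler alternative is to bound $\int e^{\gamma|x-y|^2/t}\fa$-type quantities by $\norm{{\mA}^{\einhalb} e^{-t{\mA}/2}}$ combined with the weighted Gaussian, via Cauchy--Schwarz. By Cauchy--Schwarz, the weighted estimate yields
\[
\int_{|x-y|\ge r}\norm{\nabla_x{\bf p}(t,x,y)u}\,dx\le C t^{-1/2}e^{-c r^2/t}\norm{u}_{\ell^2},
\]
and the time-derivative version handles the $(r^2/t)^m$ factor. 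This completes (H2), and Proposition~\ref{prop:prop2-1} then gives weak type $(1,1)$ with constants depending only on $d$.
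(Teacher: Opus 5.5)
Your architecture matches the paper's. You get the $L^2$ contraction from the form, check property (b) of Proposition~\ref{prop:prop2-1} on fixed vectors $u\in\ell^2$, and use the weighted bound of Lemma~\ref{lem-grad-p} for $\nabla_x{\bf p}(t,\cdot,y)u$. The paper proves that lemma at fixed $t$, much as in your ``simpler alternative'': the energy term is bounded by $\Re\,\langle\mA{\bf w},{\bf w}e^{\beta|\cdot-y|^2/t}\rangle$ with $\norm{\mA{\bf w}}_{L^2(\RR^d,\ell^2)}\le Ct^{-1}\norm{{\bf p}(t/2,\cdot,y)u}_{L^2(\RR^d,\ell^2)}$, so $\mA$ enters rather than $\mA^{\einhalb}$. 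With $m=1$ your $A_r$ is the paper's $e^{-r^2\mA}$, and no larger $m$ is needed.

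The real difference is where the cancellation for large $t$ comes from. The paper substitutes $t\mapsto t-r^2$ in the second term, so the kernel of $\partial_j\mA^{-\einhalb}(I-e^{-r^2\mA})$ becomes $\frac{1}{\sqrt\pi}\int_0^\infty\partial_{j,x}{\bf p}(t,x,y)\bigl[t^{-\einhalb}-\eins_{\{t>r^2\}}(t-r^2)^{-\einhalb}\bigr]\,dt$. The cancellation then sits in an explicit scalar weight, and only ${\bf p}$ at the single time $t$ appears. Cauchy--Schwarz with the lemma leaves the scale-invariant, finite integral $\int_0^\infty e^{-\beta r^2/t}t^{-\einhalb}\bigl|t^{-\einhalb}-\eins_{\{t>r^2\}}(t-r^2)^{-\einhalb}\bigr|\,dt$.

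You instead put the cancellation on the heat kernel. You therefore also need $\int_{\RR^d}\norm{\nabla_x\partial_t^m{\bf p}(t,x,y)u}_{\ell^2}\,dx\le Ct^{-m-\einhalb}\norm{u}_{\ell^2}$, which you only assert. Corollaries~\ref{cor3.3} and \ref{cor3.4} control $\partial_t^m{\bf p}$ pointwise but say nothing about its gradient. You can supply it within the fixed-vector framework:
\begin{itemize}
\item Write $\nabla_x\partial_t^m{\bf p}(t,x,y)u=\int\nabla_x{\bf p}(\tfrac t2,x,z)\,v(z)\,dz$ with $v(z)=(\partial_s^m{\bf p})(\tfrac t2,z,y)u$.
\item Apply Lemma~\ref{lem-grad-p} at each $z$ to the vector $v(z)$, splitting the weight via $|x-y|^2\le 2|x-z|^2+2|z-y|^2$.
\item Combine Minkowski's inequality with the Gaussian bound for $\norm{v(z)}_{\ell^2}$, which follows from Cauchy's formula and Corollary~\ref{cor3.3}.
\end{itemize}
So your route works, at the cost of an extra lemma that the paper's change of variables avoids.

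Separately, the displayed bound you say is needed fails for $t\le r^2$. The terms ${\bf p}(t+kr^2,\cdot,y)$ with $k\ge 1$ contribute an amount of order $r^{-1}$ on $\{|x-y|\ge r\}$ (e.g.\ already for ${\bf V}=0$, $m=1$), whereas $e^{-cr^2/t}t^{-\einhalb}\to 0$ as $t\to 0$. The conclusion survives: the correct bound on $(0,r^2]$ is $C\bigl(t^{-\einhalb}e^{-cr^2/t}+r^{-1}\bigr)\norm{u}_{\ell^2}$, which still integrates against $t^{-\einhalb}\,dt$ to a constant, since $\int_0^{r^2}r^{-1}t^{-\einhalb}\,dt=2$. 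The estimate should be stated in that form.
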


\begin{proof}
As mentioned above, the proof follows the ideas in \cite{Coulhon-Duong}. We give the details in order to convince the reader that the $L^p$-norm of $\nabla {\mA}^{-\einhalb}$ can be bounded by a constant independent of ${\bf V}$. This fact is important for the proof of Theorem~\ref{thm1.1}.

Clearly 
$$ \norm{\nabla {\bf f}}_{L^2(\RR^d, \ell^2)}^2 \le {\fa}({\bf f}, {\bf f}) = \norm{{\mA}^{\einhalb} {\bf f}}_{L^2(\RR^d, \ell^2)}^2$$
and notice  that it  implies that the range $R({\mA}^{\einhalb})$ is dense in $L^2(\RR^d, \ell^2)$. It then follows that 
$\partial_j {\mA}^{-\einhalb}$, initially  defined on $R({\mA}^{\einhalb})$, extends  to a contraction operator on $L^2(\RR^d, \ell^2)$.  
We denote by $\partial_j {\mA}^{-\einhalb}$ (for $j = 1, \cdots, d$)  this extension.  For the remainder of the proof, it is enough by interpolation to prove the weak type $(1,1)$ estimate.

We start with the formula
$$ \partial_j  {\mA}^{-\einhalb} = \frac{1}{\sqrt{\pi}} \int_0^\infty \partial_j e^{-t {\mA}}\,  \frac{dt}{\sqrt{t}}.$$
It implies that $\partial_j {\mA}^{-\einhalb}$ is given by a singular kernel
\begin{equation}\label{eq4.2}
 {\bf K}_j(x,y) = \frac{1}{\sqrt{\pi}} \int_0^\infty \partial_{j,x}  {\bf p}(t,x,y)\,  \frac{dt}{\sqrt{t}}
 \end{equation}
where $\partial_{j,x}$ denotes the partial derivative with respect to the variable $x$. For fixed $r > 0$, observe that
\begin{eqnarray*}
\partial_j {\mA}^{-\einhalb} e^{- r^2 {\mA}} &=& \frac{1}{\sqrt{\pi}} \int_0^\infty \partial_j e^{-(t+r^2) {\mA}}\,  \frac{dt}{\sqrt{t}}\\
&=&  \frac{1}{\sqrt{\pi}} \int_{r^2}^\infty \partial_j e^{-t {\mA}}\,  \frac{dt}{\sqrt{t}}\\
 &=& \frac{1}{\sqrt{\pi}} \int_0^\infty \partial_j e^{-t {\mA}}\,\eins_{\{t > r^2\}}\,  \frac{dt}{\sqrt{t-r^2}}.
 \end{eqnarray*}
 Hence, the kernel ${\bf K}^r_j(x,y)$ of $\partial_j {\mA}^{-\einhalb} e^{- r^2 {\mA}}$ is given by 
\begin{equation}\label{eq4.3}
 {\bf K}^r_j(x,y) = \frac{1}{\sqrt{\pi}} \int_0^\infty \partial_{j,x}  {\bf p}(t,x,y)\, \eins_{\{t > r^2\}}\, \frac{dt}{{\sqrt{t-r^2}}}.
 \end{equation}
According to Proposition~\ref{prop:prop2-1} (property~\ref{prop2-1-2}), once we have proved that there exists a constant $W$, independent of $r$, $y$, $u \in \ell^2$ and ${\bf V}$, such that 
\begin{equation}\label{HO-1}
\int_{|x-y| \ge 2 r} \norm{{\bf K}_j(x,y)u - {\bf K}^r_j(x,y)u}_{\ell^2}\, dx \le W\, \norm{u}_{\ell^2}
\end{equation}
we can conclude that  $\partial_j  {\mA}^{-\einhalb}$ is weak type $(1,1)$ and its weak type estimate can be bounded by  $C_1 + C_2 W$ for some constants $C_1, C_2$ independent of ${\bf V}$.\footnote{See the estimates (2.2)-(2.4) in the  proof of Theorem~1.1 in \cite{HO}. The constants $C_1, C_2$ depend only on the constants involved in the Calder\'on-Zygmund decomposition. They are independent of the operator.}
%Once we prove \eqref{HO} with constant independent of ${\bf V}$ we obtain the second assertion of the theorem. 
The $L^p$-estimate in the theorem will then follow by interpolation since the $L^2$-norm of  $\partial_j {\mA}^{-\einhalb}$ is bounded by $1$. Thus, it suffices to prove property \eqref{HO-1}. For this, we use the weighted gradient estimate stated in the lemma below.  \\
Applying \eqref{eq4.2} and \eqref{eq4.3} yields for $\beta > 0$ as in Lemma~\ref{lem-grad-p},
\begin{eqnarray*}
&&  \int_{|x-y| \ge 2 r} \norm{{\bf K}_j(x,y)u - {\bf K}^r_j(x,y)u}_{\ell^2}\, dx \\
&=& \frac{1}{\sqrt{\pi}}  \int_{|x-y| \ge 2 r} \norm{\int_0^\infty \partial_{j,x}{\bf p}(t,x,y)u [ \frac{1}{\sqrt{t}} - \eins_{\{t > r^2\}}\, \frac{1}{{\sqrt{t-r^2}}} ]\, dt}_{\ell^2}\, dx\\
&\le&  \frac{1}{\sqrt{\pi}}  \int_{|x-y| \ge 2 r} \int_0^\infty \norm{\partial_{j,x}{\bf p}(t,x,y)u}_{\ell^2}| \frac{1}{\sqrt{t}} - \eins_{\{t > r^2\}}\, \frac{1}{{\sqrt{t-r^2}}} |\, dt\, dx\\
&=& \frac{1}{\sqrt{\pi}}  \int_{|x-y| \ge 2 r} \int_0^\infty \norm{\partial_{j,x}{\bf p}(t,x,y)u}_{\ell^2}\, e^{\frac{\beta|x-y|^2}{2t}} e^{\frac{-\beta|x-y|^2}{2t}}| \frac{1}{\sqrt{t}} - \eins_{\{t > r^2\}}\, \frac{1}{{\sqrt{t-r^2}}} |\, dt\, dx\\
&\le & \frac{1}{\sqrt{\pi}}  \int_0^\infty e^{-\frac{\beta r^2}{t}}  \int_{\RR^d} \norm{\partial_{j,x}{\bf p}(t,x,y)u}_{\ell^2}\, e^{\frac{\beta|x-y|^2}{2t}} e^{\frac{-\beta|x-y|^2}{4t}}| \frac{1}{\sqrt{t}} - \eins_{\{t > r^2\}}\, \frac{1}{{\sqrt{t-r^2}}} |\, dx\, dt\\
&\le & \frac{1}{\sqrt{\pi}}  \int_0^\infty e^{-\frac{\beta r^2}{t}}  | \frac{1}{\sqrt{t}} - \eins_{\{t > r^2\}}\, \frac{1}{{\sqrt{t-r^2}}} | \left( \int_{\RR^d} \norm{\partial_{j,x}{\bf p}(t,x,y)u}_{\ell^2}^2\, e^{\frac{\beta|x-y|^2}{t}}\, dx\right)^{\einhalb}\\
&& \times \left(  \int_{\RR^d} e^{\frac{-\beta|x-y|^2}{2t}}\, dx \right)^{\einhalb}\, dt\\
&\le& C_0\,  \norm{u}_{\ell^2} \int_0^\infty e^{-\frac{\beta r^2}{t}}  t^{-\einhalb} | \frac{1}{\sqrt{t}} - \eins_{\{t > r^2\}}\, \frac{1}{{\sqrt{t-r^2}}} |\, dt\\
&\le& W\,  \norm{u}_{\ell^2}
\end{eqnarray*}
for some constant $W$ independent of $r$ and ${\bf V}$ since the constants $C$ and $\beta$ from Lemma~\ref{lem-grad-p} are independent of ${\bf V}$. This proves the theorem.
\end{proof}
\begin{lemma}\label{lem-grad-p} Let $\beta \in (0, \frac{1}{6})$. Then there exists a positive constant $C$,  independent of ${\bf V}$,  such that 
\[
\int_{\RR^d} \norm{\partial_{j,x}{\bf p}(t,x,y)u}_{\ell^2}^2\,   e^{\frac{\beta|x-y|^2}{t}}\, dx \le C\, t^{-1 - \frac{d}{2}}\, \norm{u}_{\ell^2}^2
\]
for all $t > 0$, $y \in \RR^d$ and $u \in \ell^2$.
\end{lemma}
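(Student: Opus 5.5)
We prove the weighted gradient bound by the standard Grigor'yan/Coulhon--Duong energy argument, applied to the vector-valued function ${\bf u}_t := {\bf p}(t,\cdot,y)u = e^{-\frac{t}{2}\mA}\big({\bf p}(\tfrac t2,\cdot,y)u\big)$. This is an element of the form domain $D(\fa)$ for every $t>0$; this is exactly why the statement is formulated with ${\bf p}(t,x,y)u$ rather than with the operator-valued kernel. The proof has three steps, and in each the potential enters only through a nonnegative term that is discarded, so all constants are independent of ${\bf V}$.

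\emph{Step 1 (weighted $L^2$ bound for the kernel).} Set $E(t)=\int_{\RR^d}\norm{{\bf u}_t(x)}_{\ell^2}^2 e^{\gamma|x-y|^2/t}\,dx$. By Corollary~\ref{cor3.2}, $\norm{{\bf u}_t(x)}_{\ell^2}\le G(t,x,y)\norm{u}_{\ell^2}$. Hence, for any $\gamma<\frac12$,
\[
E(t)\le C\,t^{-d/2}\norm{u}_{\ell^2}^2 .
\]
We will use this with $\gamma=2\beta<\frac13$.

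\emph{Step 2 (weighted energy identity).} Put $\phi(x)=\beta|x-y|^2/t$ for the fixed time $t$ and use the weight $e^{\phi}$. We test the equation $\partial_s{\bf u}_s=-\mA{\bf u}_s$ against $e^{\phi}{\bf u}_s$, after truncating $\phi$ to a bounded Lipschitz function and letting the truncation go at the end. Using $\fa$ and $\nabla e^{\phi}=e^{\phi}\nabla\phi$ gives
\[
\Re\,\fa({\bf u},e^\phi{\bf u})=\int e^\phi\norm{\nabla{\bf u}}^2+\Re\int e^\phi\langle\nabla{\bf u},\nabla\phi\,{\bf u}\rangle+\int e^\phi\langle{\bf V}{\bf u},{\bf u}\rangle .
\]
The potential term is nonnegative and is dropped. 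By Cauchy--Schwarz, the middle term is bounded below by $-\frac12\int e^\phi\norm{\nabla{\bf u}}^2-\frac12\int e^\phi|\nabla\phi|^2\norm{{\bf u}}^2$, and $|\nabla\phi|^2=4\beta\phi/t$. Therefore
\[
\int e^{\phi}\norm{\nabla{\bf u}_s}_{\ell^2}^2\,dx\;\le\;-\frac{d}{ds}\int e^\phi\norm{{\bf u}_s}^2\,dx+\frac{C\beta}{t}\int \phi\,e^{\phi}\norm{{\bf u}_s}^2\,dx .
\]
We integrate this over $s\in[t/2,t]$, keeping $\phi$ frozen at time $t$.

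\emph{Step 3 (closing the estimate).} For $s\in[t/2,t]$ we have $\phi\le 2\beta|x-y|^2/s$, and $\phi e^{\phi}\le C e^{2\beta'|x-y|^2/s}$ for any $\beta'$ slightly larger than $\beta$. Since $\beta<\frac16$, we can choose $\beta'$ with $2\beta'<\frac13<\frac12$, so Step~1 applies. The right-hand side is then bounded by
\[
C\big(1+\tfrac{t}{2}\cdot\tfrac1t\big)t^{-d/2}\norm{u}^2 .
\]
This gives the averaged bound
\[
\int_{t/2}^t\int e^{\phi}\norm{\nabla{\bf u}_s}^2\,dx\,ds\le C t^{-d/2}\norm{u}^2 .
\]

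To obtain the estimate at the single time $t$, we need monotonicity of $s\mapsto\int e^{\phi}\norm{\nabla{\bf u}_s}^2$, up to controllable errors. We will get this from a second energy computation: differentiate $\fa$-type quantities in $s$, or use $\partial_s{\bf u}_s=-\mA{\bf u}_s$ together with $\norm{\mA e^{-s\mA}}\lesssim s^{-1}$ in the weighted space. The latter follows from the complex-time Gaussian bound, Corollary~\ref{cor3.3}, via a Cauchy integral. Alternatively, we avoid monotonicity by replacing $t$ with $2t$ and writing ${\bf u}_{2t}$ in terms of the weighted $L^2$ bound on $\mA{\bf u}$. In either route we then obtain $\int e^\phi\norm{\partial_j{\bf u}_t}^2\le Ct^{-1-d/2}\norm{u}^2$.

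The main obstacle is this last pointwise-in-time step. The weighted analogue of $\norm{\nabla e^{-t\mA}}^2\le \norm{\mA e^{-t\mA}}\norm{e^{-t\mA}}$ must be carried out without using the size of ${\bf V}$. The plan is to use the identity
\[
\int e^\phi\norm{\nabla{\bf u}}^2+\int e^\phi\langle {\bf V}{\bf u},{\bf u}\rangle\le \Re\int e^\phi\langle\mA{\bf u},{\bf u}\rangle+\frac12\int e^\phi\norm{\nabla{\bf u}}^2+\frac{C}{t}\int\phi e^\phi\norm{{\bf u}}^2 .
\]
We then bound $\int e^{\phi}\norm{\mA{\bf u}_t}^2\le Ct^{-2-d/2}\norm{u}^2$ using Corollary~\ref{cor3.3}. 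This works because $\partial_t{\bf p}(t,x,y)$ is a Cauchy integral of ${\bf p}(z,x,y)$ over a circle of radius of order $t$, so it inherits a Gaussian bound with a slightly worse exponent. The restriction $\beta<\frac16$ leaves exactly this room, since Corollary~\ref{cor3.3} gives decay $e^{-c|x-y|^2/t}$ with a constant $c$ that has to dominate $\beta$ after all the losses.
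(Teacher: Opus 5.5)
Your argument is correct, but only its last paragraph is needed. The fixed-time inequality there, combined with $\Re\int e^{\phi}\langle\mA{\bf u}_t,{\bf u}_t\rangle\,dx\le\bigl(\int e^{\phi}\norm{\mA{\bf u}_t}_{\ell^2}^2dx\bigr)^{1/2}\bigl(\int e^{\phi}\norm{{\bf u}_t}_{\ell^2}^2dx\bigr)^{1/2}$ and Step~1, already proves the lemma once you absorb $\frac12\int e^{\phi}\norm{\nabla{\bf u}_t}^2$. This absorption is legitimate with the truncated weight $\phi_n=\min(\phi,n)$, for which that quantity is finite and $|\nabla\phi_n|^2\le 4\beta\phi_n/t$. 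The time-averaged Steps~2--3 and the monotonicity question can therefore be dropped.

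What remains has the paper's skeleton: test against $e^{\phi}{\bf p}(t,\cdot,y)u$, discard the ${\bf V}$-term, absorb the gradient. The real difference is how you handle the term $\langle\mA{\bf u}_t,e^{\phi}{\bf u}_t\rangle$. You split the weight symmetrically. You therefore need a Gaussian bound for $\partial_t{\bf p}(t,\cdot,y)u$, which you get from Corollary~\ref{cor3.3} by a Cauchy integral. This also needs a Fubini argument identifying the kernel of $\mA e^{-t\mA}$ with the contour integral of ${\bf p}(z,x,y)$.

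The paper instead puts the whole weight on ${\bf u}_t$. It writes $|\fa({\bf u}_t,e^{\phi}{\bf u}_t)|\le\norm{\mA{\bf u}_t}_{L^2}\norm{e^{\phi}{\bf u}_t}_{L^2}$, with $\norm{\mA{\bf u}_t}_{L^2}=\norm{\mA e^{-\frac t2\mA}{\bf u}_{t/2}}_{L^2}\le\frac Ct\norm{{\bf u}_{t/2}}_{L^2}$ and $\norm{{\bf u}_{t/2}}_{L^2}^2=\langle{\bf p}(t,y,y)u,u\rangle_{\ell^2}\le Ct^{-d/2}\norm{u}_{\ell^2}^2$. So it uses only self-adjointness on $L^2$, the semigroup property and the real-time bound of Corollary~\ref{cor3.2}, with no complex-time kernel estimates.

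Conversely, your treatment of the cross term by Young's inequality and $|\nabla\phi|^2=4\beta\phi/t$ is more economical than the paper's. The paper dominates $|x_j-y_j|/\sqrt t$ by $e^{\beta|x-y|^2/t}$, applies Cauchy--Schwarz with weights $e^{3\beta|x-y|^2/t}$ and $e^{\beta|x-y|^2/t}$, and closes $I\le A+B\sqrt I$. That is where $\beta<\frac16$ comes from, namely $3\beta<\frac12$. Your closing remark therefore misattributes the threshold. In your route the binding constraint is $\beta<\frac14$: twice the constant $\frac18$ of Corollary~\ref{cor3.3}, up to an arbitrarily small loss on the Cauchy circle. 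So $\beta<\frac16$ is more than enough.
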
 
This lemma was first proved in \cite{Grigoryan:JFA} for the Laplace-Beltrami operator on manifolds. It was then used in an efficient way in \cite{Coulhon-Duong} in the proof of the weak type $(1,1)$ of the Riesz transform on manifolds. We adapt the proof of 
\cite{Grigoryan:JFA} to our vector-valued setting and keep in mind that we want to have constants $C$ and $\beta$  independent of ${\bf V}$. We emphasize that we cannot prove a version of this lemma with the uniform norm, that is, with $\norm{\partial_{j,x}{\bf p}(t,x,y)}_{{\mathcal L}(\ell^2)}^2$ instead of $\norm{\partial_{j,x}{\bf p}(t,x,y)u}_{\ell^2}^2$ for every fixed $u$. This is exactly  why we could not  prove  \eqref{HO-1} with the uniform norm and this the reason  to discuss Proposition~\ref{prop:prop2-1} (property~\ref{prop2-1-2}) in Section~\ref{sec1.5}. 

\begin{proof} Let $\beta \in (0, \frac{1}{6})$ and set for a fixed $u \in \ell^2$ and $t > 0$
\[ 
I := \sum_{j=1}^d \int_{\RR^d} \norm{\partial_{j,x}{\bf p}(t,x,y)u}_{\ell^2}^2\,   e^{\frac{\beta|x-y|^2}{t}}\, dx.
\]
Then
\begin{eqnarray*}
I &=& \sum_{j=1}^d  \int_{\RR^d} \langle \partial_{j,x}( {\bf p}(t,x,y)u\,  e^{\frac{\beta|x-y|^2}{t}}), \partial_{j,x} {\bf p}(t,x,y)u \rangle_{\ell^2}\, dx\\
&& - \sum_{j=1}^d  \frac{2 \beta}{t} \int_{\RR^d} |x_j - y_j| e^{\frac{\beta|x-y|^2}{t}}  \langle  {\bf p}(t,x,y)u , \partial_{j,x} {\bf p}(t,x,y)u \rangle_{\ell^2}\, dx\\
 &=& I_1 + I_2.
\end{eqnarray*}
Recall that ${\bf V}$ is non-negative and hence we can dominate the first term by using the bilinear form $\fa$ as follows\footnote{we ignore some simple facts such as ${\bf p}(t,\cdot,y)u\, e^{\frac{\beta|\cdot-y|^2}{t}}  \in D(\fa)$. One can do this by inferring  a sequence $\psi_n$ of $C_c^\infty$ functions which converges pointwise to $1$ and use an approximation argument. See the proof of Theorem~6.19 in \cite{Ouh05}.}
\begin{eqnarray*}
I_1 &\le & \fa( {\bf p}(t,x,y)u, {\bf p}(t,x,y)u\,  e^{\frac{\beta|x-y|^2}{t}})\\
&=&-  \int_{\RR^d} \langle \frac{d}{dt} {\bf p}(t,x,y)u, {\bf p}(t,x,y)u\, e^{\frac{\beta|x-y|^2}{t}} \rangle_{\ell^2}\, dx\\
&\le& \norm{\frac{d}{dt} {\bf p}(t,x,y)u}_{L^2(\RR^d, \ell^2)} \norm{ {\bf p}(t,x,y)u\, e^{\frac{\beta|x-y|^2}{t}}}_{L^2(\RR^d, \ell^2)}.
\end{eqnarray*}
Recall that ${\bf p}(t,x,y)$ satisfies the Gaussian upper bound \eqref{eq3.8} and hence, since $\beta < \frac{1}{4}$, ${\bf p}(t,x,y)u\, e^{\frac{\beta|x-y|^2}{t}}$ still has a Gaussian upper bound and hence
\[
\norm{ {\bf p}(t,\cdot,y)u\, e^{\frac{\beta|\cdot-y|^2}{t}}}_{L^2(\RR^d, \ell^2)} \le C_0\,  t^{-\frac{d}{4}} \norm{u}_{\ell^2}
\]
with a constant $C_0$ independent of ${\bf V}$. On the other hand, %by the Cauchy formula, it follows from Corollary~\ref{cor3.3} that $\frac{d}{dt} {\bf p}(t,x,y)$ also admits a Gaussian upper bound with an additional $t^{-1}$. Therefore
by the semigroup property
\[ 
{\bf p}(t,x,y) = e^{ -\frac{t}{2}{\mathcal A}} {\bf p}( \frac{t}{2},\cdot,y)(x) \quad {\rm and} \ \int_{\RR^d} {\bf p}(t,x,y) {\bf p}(s,x,y)\, dx = {\bf p}(t+s,y,y).
\]
Hence, using again the Gaussian upper bound \eqref{eq3.8}, 
\begin{eqnarray*}
\norm{ \frac{d}{dt}{\bf p}(t,\cdot,y)u\, e^{\frac{\beta|\cdot-y|^2}{t}}}_{L^2(\RR^d, \ell^2)} &=& \norm{ {\mathcal A} e^{ -\frac{t}{2}{\mathcal A}} {\bf p}( \frac{t}{2},\cdot,y) u}_{L^2(\RR^d, \ell^2)}\\
&\le& \frac{C_0'}{t}  \norm{ {\bf p}( \frac{t}{2},\cdot,y) u}_{L^2(\RR^d, \ell^2)}\\
&=&  \frac{C_0'}{t} \sqrt{ \langle {\bf p}( \frac{t}{2},y,y) u, u \rangle_{\ell^2}}\\
&\le&  C_1\,  t^{-\frac{d}{4} -1} \norm{u}_{\ell^2}
\end{eqnarray*}
with a constant $C_1$ independent of ${\bf V}$. 
Therefore,
\begin{equation}\label{eq4.4}
I_1 \le C_0C_1\,  t^{-\frac{d}{2} -1} \norm{u}_{\ell^2}^2.
\end{equation} 
Concerning the second term $I_2$, we first dominate $\frac{|x_j-y_j|}{\sqrt{t}}$ by $e^{\frac{\beta|x-y|^2}{t}}$ (up to a constant) and obtain
\begin{eqnarray*}
I_2 &\le& \frac{C_3}{\sqrt{t}}\,  \sum_{j=1}^d \int_\RR e^{\frac{2\beta|x-y|^2}{t}} \norm{{\bf p}(t,x,y)u}_{\ell^2}\, \norm{\partial_{j,x}{\bf p}(t,x,y)u}_{\ell^2}\, dx\\
&\le& \frac{C_3}{\sqrt{t}}\,  \sum_{j=1}^d \left( \int_\RR e^{\frac{3\beta|x-y|^2}{t}} \norm{{\bf p}(t,x,y)u}_{\ell^2}^2\, dx\right)^{\einhalb} \left( \int_\RR e^{\frac{\beta|x-y|^2}{t}} \norm{\partial_{j,x} {\bf p}(t,x,y)u}_{\ell^2}^2\, dx\right)^{\einhalb}.
\end{eqnarray*}
Using again the Gaussian  upper bound \eqref{eq3.8} and the fact that $\beta < \frac{1}{6}$ we can estimate the last term by 
\[ \frac{C_4}{\sqrt{t}}\,  t^{-\frac{d}{4}} \sqrt{I}\, \norm{u}_{\ell^2}. 
\]
This together with \eqref{eq4.4}, gives
\[
I \le C_0C_1\,  t^{-\frac{d}{2} -1} \norm{u}_{\ell^2}^2 + C_4\,  t^{-\frac{d}{4} - \frac{1}{2}} \sqrt{I}\, \norm{u}_{\ell^2}
\]
which implies the estimate in the lemma.
\end{proof} 

\begin{proof}[Proof of Theorem~\ref{thm1.1}] Let $(V_n)$ be a sequence of non-negative potentials on $\RR^d$ and each $V_n$ is locally integrable. Let  ${\bf V}$ be  the diagonal operator  whose entries are $V_k, k = 0,1, \cdots$. The vector-valued  Schr\"odinger operator ${\mathcal A}$ acts as a diagonal operator so that for ${\bf f} = (f_k)$, ${\mathcal A}{\bf f} =((-\Delta + V_k)f_k)$. By Theorem~\ref{thm4.1}, the Riesz transform of ${\mathcal A}$ is weak type $(1,1)$ with a bound independent of $(V_k)$ and  this shows  the first assertion. \\
By interpolation with the $L^2(\RR^d, \ell^2)$ estimate,  we obtain a  bound on $L^p(\RR^d, \ell^2)$ for $p \in (1,2]$ by a constant  independent of $(V_k)$. This means exactly that the set $\{ \nabla(-\Delta + V)^{-\einhalb}, \ 0 \le V \in L^1_{loc}(\RR^d) \}$ is ${\mathcal R}$-bounded on $L^p(\RR^d)$.\\
Finally,  it is clear from Theorem~\ref{thm2.1} that $\nabla{\mathcal A}^{-\einhalb}$ is bounded on $L^p(\RR^d, \ell^p)$ with a bound independent of $(V_k)$. By interpolation with the  
$L^p(\RR^d, \ell^2)$ case discussed above, we obtain the third  assertion of the theorem.

\end{proof} 

%%%%%%%%%%%%%%%%%%%%%%%%%%%%%%%%%%%%%%%%%%%
\section{Spectral multipliers}\label{sec6}

The aim of this section is to prove Theorem~\ref{thm1.2}. We state and prove a multiplier theorem  for more general potentials ${\bf V}$. Once specified to a diagonal operator, we obtain  Theorem~\ref{thm1.2}. We keep the notation of Section~\ref{sec3}.

\begin{theorem}\label{thm6.1}
Let ${\bf V} : \RR^d \to {\mathcal L}(\ell^2)$ be weakly  measurable, locally integrable,  and such that for a.e. $x$, ${\bf V} (x)$ is  symmetric and nonnegative operator on $\ell^2$. 
Let $m: [0, \infty) \to \CC$ be a bounded function such that
\begin{equation}\label{eq666}
 \sup_{t > 0} \norm{ \phi(\cdot) m(t \cdot) }_{W^{s, \infty}} < \infty
\end{equation}
for some $s > \frac{d}{2}$ and a nontrivial function $\phi \in C_c^\infty(0, \infty)$. Then the operator $m({\mathcal A})$ is of weak type $(1,1)$ and bounded on $L^p(\RR^d, \ell^2)$ for all $p \in (1,2)$. Both the weak type $(1,1)$  and $L^p$-norm can be estimated by a constant independent of ${\bf V}$.
\end{theorem}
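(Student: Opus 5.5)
The plan is to follow the Duong--McIntosh approach via Theorem~\ref{thm:main} and Proposition~\ref{prop:prop2-1}, now with $T = m(\mA)$ and $A_r = e^{-r^2\mA}$. The only inputs will be the $\mathbf V$-independent Gaussian bounds of Corollaries~\ref{cor3.2} and \ref{cor3.3}, so every constant will depend only on $d$, $s$, $\phi$ and the Hörmander norm of $m$. The $L^2(\RR^d,\ell^2)$ bound is immediate from the spectral theorem, since $\norm{m(\mA)}\le \norm{m}_\infty$. Once weak type $(1,1)$ is proved, Marcinkiewicz interpolation gives $p\in(1,2)$; duality then gives $p>2$, because $\bar m$ satisfies the same hypothesis and $m(\mA)^*=\bar m(\mA)$.

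\textbf{Step 1: off-diagonal bound for $A_r$.} Condition \eqref{eq:At-estimate} with $p_0=1$, $p=2$ holds for $A_r=e^{-r^2\mA}$ because its kernel satisfies the Gaussian bound \eqref{eq3.8}. I will use \eqref{eq:Hoermander-a-condition} with $S=T$ and $a=1$, i.e.\ property~\ref{prop2-1-3} of Proposition~\ref{prop:prop2-1}: for $f$ supported in $B(x,r)$,
\[
\int_{\RR^d\setminus B(x,2r)} \bignorm{m(\mA)(I-e^{-r^2\mA})f}_{\ell^2}\,dy \le W\norm{f}_{L^1(\RR^d,\ell^2)}.
\]
Applying Minkowski as in the proof of Proposition~\ref{prop:prop2-1}, it suffices to show, for every $z\in B(x,r)$ and $u\in\ell^2$, that
\[
\int_{|y-z|\ge r}\norm{K_{m_r}(y,z)u}_{\ell^2}\,dy\le W\norm{u}_{\ell^2},
\]
where $m_r(\lambda)=m(\lambda)(1-e^{-r^2\lambda})$.

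\textbf{Step 2: dyadic decomposition and weighted kernel bounds.} Write $m=\sum_{n\in\ZZ} m\,\phi_n$, where $\phi_n(\lambda)=\phi_0(2^{-n}\lambda)$ is a dyadic partition of unity built from $\phi$. The key estimate is the standard weighted $L^2$ bound for a function $F$ supported in $[0,R^2]$:
\[
\int_{\RR^d}\norm{K_{F(\mA)}(y,z)u}_{\ell^2}^2\,(1+R|y-z|)^{2s'}\,dy\le C R^{d}\,\norm{F(R^2\cdot)}_{W^{s'',\infty}}^2\norm{u}_{\ell^2}^2.
\]
It is derived from the Gaussian bounds, as in Duong--Ouhabaz--Sikora. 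One writes $F(\mA)=G(e^{-\mA/R^2})\,e^{-\mA/R^2}$ and expands $G$ via a Fourier transform in terms of $e^{-(1-i\tau)\mA/R^2}$. The complex-time Gaussian bound \eqref{eq3.9} then gives weighted $L^2$ estimates of these kernels applied to a fixed $u$. The $u$-fixed formulation is what Proposition~\ref{prop:prop2-1} allows, exactly as in Lemma~\ref{lem-grad-p}.

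\textbf{Step 3: summation.} Using Cauchy--Schwarz with the weight and $s>d/2$, each dyadic piece satisfies
\[
\int_{|y-z|\ge r}\norm{K_{(m\phi_n)_r}(y,z)u}\,dy\le C\min\bigl(2^{n}r^2,(2^{n/2}r)^{-(s-d/2)}\bigr)\norm{u},
\]
which sums over $n$ uniformly in $r$. The factor $1-e^{-r^2\lambda}$ supplies the gain $2^n r^2$ when $2^n r^2\le1$; the weight supplies the decay in the other regime.

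\textbf{Main obstacle.} The hardest step is Step 2. The derivation of weighted Plancherel-type bounds from Gaussian estimates needs the loss $s>d/2+\varepsilon$ only, and it has to run with strong (fixed-$u$) norms, since uniform operator norms of the kernel derivatives are unavailable. The first thing I will check is that the argument in the Duong--Ouhabaz--Sikora style uses only Gaussian bounds for $e^{-z\mA}$ together with the $L^2$ functional calculus, both of which hold here without dependence on $\mathbf V$. If that part of the argument loses too much, I will fall back on $s>d/2+1/2$ and recover the sharp range by complex interpolation in $s$.
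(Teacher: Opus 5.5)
Your proposal is correct and follows essentially the paper's route: Proposition~\ref{prop:prop2-1} with $A_r=e^{-r^2\mathcal A}$ and fixed-$u$ kernel estimates, a dyadic decomposition of $m$, the weighted Plancherel-type bound of Lemma~\ref{lem6.3} derived from the complex-time Gaussian bound \eqref{eq3.9}, and Cauchy--Schwarz against the weight, which gives a factor $\min(1,2^nr^2)\min\bigl(1,(2^{n/2}r)^{d/2-s'}\bigr)$ that sums uniformly in $r$ (working with $m(\mathcal A)$ rather than $m(\sqrt{\mathcal A})$ is only a change of variable). The one point to make explicit in Step~2 is that the Fourier expansion alone only gives the lossy bound of Lemma~\ref{lem6.1} (cost $W^{s'+d/2+1,2}$); the $W^{s'+\varepsilon,\infty}$ cost comes from vector-valued Stein--Weiss interpolation against the trivial unweighted bound, as in Lemma~\ref{lem6.2}, so no fallback to $s>d/2+1/2$ is needed.
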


The proof  closely follows the proof in  \cite{DOS} (or \cite{Ouh05}, Chapter 7) and it does not depend strongly on the vector-valued setting of the present paper. However, since we want to have constants independent of ${\bf V}$ we need to write out explicitly the relevant arguments. We emphasize that the proof below works for abstract self-adjoint operators ${\mathcal A}$ on $L^2(X, H)$ where $(X, \rho, \mu)$ is a space of homogeneous type and ${\mathcal A}$ has a heat kernel 
${\bf p}(t,x,y)$ which enjoys a Gaussian upper bound
\[
\norm{ {\bf p}(t,x,y)}_{{\mathcal L}(H)}\le \frac{C_0}{V(x, t^{1/\nu})} \exp\{- c_0 \frac{ \rho(x,y)^{\nu/(\nu -1)}}{t^{1/(\nu -1)}} \}
\]
for some $\nu > 1$. Here $V(x, t^{1/\nu}) = \mu(B(x, t^{1/\nu}))$ is  the volume of the ball $B(x, t^{1/\nu})$ and $H$ is a separable Hilbert space. 
We do not write the proof in this abstract setting and restrict  ourselves to the proof of Theorem~\ref{thm1.2} in order to keep control of the constants with respect to  the potential. By an obvious change of the variable we can consider either $m(\mA)$ or $m(\sqrt{\mA})$. 

It is worth mentioning that if  $m: [0, \infty) \to \CC$ satisfies 
\[ \sup_{\lambda > 0} |\lambda^k\, m^{(k)}(\lambda) | < \infty \quad {\rm for} \ k = 1, \cdots, [\frac{d}{2}] +1,
\]
then it satisfies H\"ormander condition \eqref{eq666} for some  $s > \frac{d}{2}$. In the particular case where $m$ is bounded holomorphic in some open sector of $\CC^+$ containing $(0,\infty)$, then by the Cauchy formula, $m$ satisfies \eqref{eq666}. Therefore, $\mA$ has a bounded holomorphic functional calculus of angle $\nu$ on $L^p(\RR^d, \ell^2)$ for any $\nu > 0$ and $p \in (1, \infty)$. 

For the Euclidean Laplacian, H\"ormander's usual condition to guarantee that  $m(-\Delta)$ is bounded on $L^p(\RR^d)$ for all
$p \in (1, \infty)$ is as in \eqref{eq666} but with the $W^{s,2}$ norm instead of $W^{s, \infty}$. In order to have similar condition on $m$ for  Schr\"odinger operators $-\Delta + V$ one needs additional conditions on the potential. We refer to \cite{DOS, COSY, CSY}  and the references therein for this and for sharp spectral multiplier results  for abstract  self-adjoint operators. It is of interest to investigate similar questions for vector-valued Schr\"odinger operators. 

\smallskip
Since $\mA$ is self-adjoint,
\[
\norm{m(\mA)}_{{\LL}(L^2(\RR^d, \ell^2))} \le \norm{m}_{L^\infty[0,\infty)}
\]
and hence by interpolation,  it is enough to prove the weak type $(1,1)$ statement. As in the scalar case, the operator $m(\sqrt{\mA})$ has a kernel $m(\sqrt{\mA})(x,y)$, see \cite{DOS} or \cite{Ouh05}, Chapter 7. 

\begin{lemma}\label{lem6.1}
Let $s > 0$  and suppose that $m$ is supported in $[0, 1]$. Then
\[
I_\psi := \int_{\RR^d} \norm{m(\sqrt{\mA})(x,y) \psi}_{\ell^2}^2 (1 + |x-y|)^s \, dx \le C \norm{m }_{W^{\frac{s}{2} + \frac{d}{2} + 1, 2}}^2 
\norm{\psi}_{\ell^2}^2
\]
for all $\psi \in \ell^2$. The constant $C$ is independent of $m$, $y$ and ${\bf V}$. 
\end{lemma}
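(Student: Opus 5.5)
We follow the scalar argument of \cite{DOS} (see also \cite{Ouh05}, Chapter~7) and work for a fixed $\psi \in \ell^2$, never with operator norms of kernels. First we reduce to even integer weights. By H\"older (or log-convexity in $s$) it suffices to treat $s$ in a discrete family, or to interpolate between the case $s=0$ and weights of the form $|x-y|^{2N}$. Since $(1+|x-y|)^s \le C(1+|x-y|^{2N})$ for $2N\ge s$, this loses a little regularity. To keep the Sobolev index sharp one may instead interpolate in $m$. We take as the intermediate target the estimate
\[
\int_{\RR^d} \norm{m(\sqrt{\mA})(x,y)\psi}_{\ell^2}^2 (1+|x-y|)^{s}\,dx \le C\,\norm{m}_{W^{\frac s2+\frac d2+1,2}}^2\norm{\psi}_{\ell^2}^2 .
\]

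The representation of $m(\sqrt{\mA})$ is the core of the proof. Put $G(\lambda)=m(\sqrt{\lambda})e^{\lambda}$, so that $G$ is supported in $[0,1]$ with $\norm{G}_{W^{k,2}}\le C\norm{m}_{W^{k,2}}$ (up to harmless changes of variable). Writing $G(\lambda)=\int_\RR \hat G(\tau)e^{i\tau\lambda}\,d\tau$ gives
\[
m(\sqrt{\mA}) = \int_\RR \hat G(\tau)\, e^{-(1-i\tau)\mA}\,d\tau ,
\]
so the kernel applied to $\psi$ is $\int \hat G(\tau)\,{\bf p}(1-i\tau,x,y)\psi\,d\tau$. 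This identity holds weakly and then a.e. by separability, as in the proof of Corollary~\ref{cor3.2}.

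Corollary~\ref{cor3.3} now supplies the bound, with $z=1-i\tau$,
\[
\norm{{\bf p}(z,x,y)\psi}_{\ell^2}\le C\exp\Big\{-\frac{|x-y|^2}{8(1+\tau^2)}\Big\}\norm{\psi}_{\ell^2}.
\]
Squaring and integrating against $(1+|x-y|)^s$ gives $\norm{{\bf p}(z,\cdot,y)\psi}_{L^2((1+|\cdot-y|)^s dx,\ell^2)}\le C(1+|\tau|)^{\frac s2+\frac d2}\norm{\psi}_{\ell^2}$. Minkowski's integral inequality then yields
\[
I_\psi^{1/2}\le C\norm{\psi}_{\ell^2}\int_\RR |\hat G(\tau)|(1+|\tau|)^{\frac s2+\frac d2}\,d\tau .
\]
Applying Cauchy--Schwarz with the extra factor $(1+|\tau|)^{-1/2-\epsilon}$ bounds this by $\norm{G}_{W^{\frac s2+\frac d2+\frac12+\epsilon,2}}$, which is in turn bounded by $\norm{m}_{W^{\frac s2+\frac d2+1,2}}$ up to a constant depending on the support. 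Every constant comes only from Corollary~\ref{cor3.3}, so it is independent of ${\bf V}$, $y$ and $m$.

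The step I expect to be the main obstacle is the scaling of the Gaussian bound at complex time. Corollary~\ref{cor3.3} carries the prefactor $(\Re z)^{-d/2}=1$ here, so the $L^2(dx)$ norm grows like $(1+\tau^2)^{d/4}$ because of the spreading in $|x-y|$. We must check that this growth, together with the weight, produces exactly the exponent $\frac s2+\frac d2$ and no worse; the computation is $\int e^{-c|x|^2/(1+\tau^2)}(1+|x|)^s\,dx\lesssim (1+|\tau|)^{s+d}$. A secondary technical point is justifying measurability and the exchange of the $\tau$-integral with the kernel evaluation for fixed $\psi$. This is handled as in the proof of Corollary~\ref{cor3.2}, using a countable dense subset of $\ell^2$.
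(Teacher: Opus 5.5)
Your proposal is correct and follows the paper's proof essentially step by step: the same function $G(\lambda)=m(\sqrt\lambda)e^{\lambda}$ and representation $m(\sqrt{\mA})=\int_\RR \hat G(\tau)e^{-(1-i\tau)\mA}\,d\tau$, Corollary~\ref{cor3.3} at $\Re z=1$ giving the factor $(1+|\tau|)^{\frac{s+d}{2}}$, then Minkowski and Cauchy--Schwarz. Your auxiliary factor $(1+|\tau|)^{-\frac{1}{2}-\epsilon}$, in place of the paper's $(1+\xi^2)^{-\frac{1}{2}}$, even gives a slightly smaller Sobolev index, and your opening paragraph about reducing to even-integer weights is never used and can be dropped.

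One caveat, which the paper passes over in the same way, concerns your comparison $\norm{G}_{W^{k,2}}\le C\norm{m}_{W^{k,2}}$. Since $\lambda\mapsto\sqrt\lambda$ is singular at $0$, this is not automatic for $m$ merely supported in $[0,1]$. It is harmless for $m$ supported in, say, $[\frac{1}{4},1]$, which is the only case needed in the proof of Theorem~\ref{thm6.1}.
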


\begin{proof} Set $g(\lambda) = m(\sqrt{\lambda}) e^\lambda$. Then 
$$
m(\sqrt{\mA}) =  g(\mA) e^{-\mA}
=  \frac{1}{2 \pi} \int_\RR \hat{g}(\xi) e^{-(1 - i \xi)\mA}\, d \xi.
$$
Hence,
\[ 
m(\sqrt{\mA})(x,y) \psi = \frac{1}{2 \pi} \int_\RR \hat{g}(\xi) {\bf p}(1 -i \xi, x,y) \psi\, d \xi.
\]
Therefore,
$$ \sqrt{I_\psi} \le \frac{1}{2 \pi} \int_\RR |\hat{g}(\xi) | \norm{{\bf p}(1 -i \xi, \cdot,y) \psi}_{L^2(\RR^d, (1 + |x-y|)^s\, dx, \ell^2)}.$$
Next we estimate $\norm{{\bf p}(1 -i \xi, \cdot,y)}_{L^2(\RR^d, (1 + |x-y|)^s\, dx, \ell^2)}$. For this, we apply Corollary~\ref{cor3.3} and we obtain 
\begin{eqnarray*}
\int_{\RR^d} \norm{{\bf p}(1 -i \xi, x,y) \psi}_{ \ell^2}^2 (1 + |x-y|)^s\, dx
&\le& C^2 \int_{\RR^d} e^{-\frac{|x-y|^2}{4 (1+\xi^2)}} (1 + |x-y|)^s\, dx \norm{\psi}_{\ell^2}^2\\
&\le& C'  \int_{\RR^d} e^{-\frac{|x-y|^2}{8(1+\xi^2)}} (1 + \xi^2)^{\frac{s}{2}}\, dx\norm{\psi}_{\ell^2}^2\\
&\le& C'' (1 + \xi^2)^{\frac{s+d}{2}}\norm{\psi}_{\ell^2}^2.
\end{eqnarray*}
The constant $C''$ is independent of $m$ and ${\bf V}$. Inserting this the definition of $I_\psi$ it follows that by the Cauchy-Schwarz inequality
\begin{eqnarray*}
\sqrt{I_\psi} &\le& C_1 \int_{\RR^d} |\hat{g}(\xi) | (1 + \xi^2)^{\frac{s+d}{4} +\frac{1}{2}} (1 + \xi^2)^{-\frac{1}{2}}\, d\xi\,  \norm{\psi}_{\ell^2}\\
&\le& C_2 \left( \int_\RR |\hat{g}(\xi) |^2 (1 + \xi^2)^{\frac{s+d}{2} +1}\, d\xi \right)^{\einhalb} \norm{\psi}_{\ell^2}\\
&\le& C_3 \norm{m }_{W^{\frac{s}{2} + \frac{d}{2} + 1, 2}} 
\norm{\psi}_{\ell^2}.
\end{eqnarray*}
This finishes the proof of the lemma.
\end{proof}
\begin{lemma}\label{lem6.2}
Let $s > 0$  and suppose that $m$ is supported in $[0, 1]$. Then for any $\varepsilon > 0$
\[
 \int_{\RR^d} \norm{m(\sqrt{\mA})(x,y) \psi}_{\ell^2}^2 (1 + |x-y|)^s \, dx \le C_\varepsilon \norm{m }_{W^{\frac{s}{2} + \varepsilon, \infty}}^2 
\norm{\psi}_{\ell^2}^2
\]
for all $\psi \in \ell^2$. The constant $C_\epsilon$ is independent of $m$, $y$ and ${\bf V}$. 
\end{lemma}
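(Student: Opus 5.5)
This is the standard argument of \cite{DOS} (see also \cite{Ouh05}, Chapter~7): a dyadic splitting of $m$ in the Fourier variable, with Lemma~\ref{lem6.1} applied to each piece. Write $\delta_R f(\lambda)=f(R\lambda)$.

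\emph{Step 1: splitting.} Fix an even $\eta\in C_c^\infty(\RR)$ whose Fourier transform is supported in $[-1,1]$ and equals $1$ near $0$. Set $\hat\eta_\ell(\tau)=\hat\eta(2^{-\ell}\tau)-\hat\eta(2^{-\ell+1}\tau)$ for $\ell\ge1$, and $\hat\eta_0=\hat\eta$. Write $m=\sum_{\ell\ge0} m*\eta_\ell=:\sum_\ell m_\ell$, viewing $m$ as a function on $\RR$. The property we use is that the Fourier transform of $m_\ell$ vanishes on $|\tau|\le 2^{\ell-2}$ for $\ell\ge 1$. The Besov-type bound
\[
\norm{m_\ell}_{L^\infty}\le C_\varepsilon\, 2^{-\ell(\frac{s}{2}+\varepsilon)}\norm{m}_{W^{\frac s2+\varepsilon,\infty}}
\]
is standard; up to relabelling $\varepsilon$, $\norm{m}_{W^{\frac s2+\varepsilon,\infty}}$ controls the $B^{s/2+\varepsilon}_{\infty,\infty}$ norm. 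However, $m_\ell$ is no longer supported in $[0,1]$, so Lemma~\ref{lem6.1} cannot be applied to it directly.

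\emph{Step 2: restore compact support via the heat factor.} This is the main obstacle. Following \cite{DOS}, we work with $G(\lambda)=m(\sqrt\lambda)e^{\lambda}$, exactly as in the proof of Lemma~\ref{lem6.1}. Since $m$ is supported in $[0,1]$, $G$ is supported in $[0,1]$ and $\norm{G}_{W^{\sigma,\infty}}\le C\norm{m}_{W^{\sigma,\infty}}$. Decompose $G=\sum_\ell G_\ell$ as in Step~1, so that $m(\sqrt{\mA})=\sum_\ell G_\ell(\mA)e^{-\mA}$. We then run the argument of Lemma~\ref{lem6.1} for each piece, splitting the $x$-integral into $|x-y|\le 2^{\ell}$ and $|x-y|>2^\ell$.

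On $|x-y|\le 2^\ell$, the weight is at most $C2^{\ell s}$. Using the Gaussian bound of Corollary~\ref{cor3.3} (or just $L^1\to L^2$ boundedness of $e^{-\mA/2}$) together with $\norm{G_\ell(\mA)}_{L^2\to L^2}\le\norm{G_\ell}_\infty$, we get
\[
\int\norm{G_\ell(\mA)e^{-\mA}(x,y)\psi}^2_{\ell^2}\,dx\le C\norm{G_\ell}_\infty^2\norm{\psi}_{\ell^2}^2 .
\]
Hence this part contributes at most $C2^{\ell s}2^{-\ell(s+2\varepsilon)}$, which is summable in $\ell$ once square roots are taken.

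On $|x-y|>2^\ell$, we use the representation $G_\ell(\mA)e^{-\mA}=\frac1{2\pi}\int\hat G_\ell(\xi)e^{-(1-i\xi)\mA}\,d\xi$, where only $|\xi|\ge 2^{\ell-2}$ contributes. We then use the Gaussian decay from Corollary~\ref{cor3.3} in the form $e^{-c|x-y|^2/(1+\xi^2)}$. The point is that for $|\xi|\lesssim |x-y|$ this gives rapid decay in $|x-y|$, while large $|\xi|$ is controlled by the decay of $\hat G_\ell$, i.e.\ by $\norm{G_\ell}_{W^{N,2}}\le C2^{\ell N}\norm{G_\ell}_\infty$ on a fixed compact set. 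To arrange this cleanly, I will instead use the finite speed/Phragmén-type bound obtained from the complex-time Gaussian estimate. Namely, replace $e^{-(1-i\xi)\mA}$ by its kernel estimate integrated against $\hat G_\ell$, and bound the weighted $L^2$ norm on $|x-y|>2^\ell$ by $C_N2^{-\ell N}\norm{G}_{W^{\frac s2+\varepsilon,\infty}}$, using that $\hat G_\ell$ has mass concentrated at $|\xi|\sim2^\ell$. I expect this far-region estimate to require care, choosing the Gaussian exponent so that the polynomial factor $(1+\xi^2)^{(s+d)/4}$ from Lemma~\ref{lem6.1}'s computation is absorbed.

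\emph{Step 3: summation.} Summing the square roots of the near and far contributions over $\ell$ yields $\sqrt{I_\psi}\le C_\varepsilon\norm{m}_{W^{\frac s2+\varepsilon,\infty}}\norm{\psi}_{\ell^2}$. All constants come only from Corollary~\ref{cor3.3} and $L^2$ functional calculus, so they are independent of $m$, $y$ and ${\bf V}$.
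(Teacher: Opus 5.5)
Your route differs from the paper's. The paper obtains Lemma~\ref{lem6.2} by Stein--Weiss interpolation between the unweighted bound $\norm{m(\sqrt{\mA})(\cdot,y)\psi}_{L^2}\le e\norm{m}_\infty\norm{\psi}_{\ell^2}$ and Lemma~\ref{lem6.1} at a large weight exponent, so the fixed loss $\frac d2+1$ of Lemma~\ref{lem6.1} becomes arbitrarily small. A Littlewood--Paley decomposition of $G$ with a near/far splitting is a legitimate alternative. Your near-region bound is correct: it uses only the $L^2$ calculus and $\norm{{\bf p}(1,\cdot,y)\psi}_{L^2}\le C\norm{\psi}_{\ell^2}$, so it loses no power of $2^{\ell d}$.

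\textbf{The gap is the far-region step, which fails as written.} For $\xi$ in the support of $\hat G_\ell$, i.e.\ $2^{\ell-2}\le|\xi|\le 2^\ell$, Corollary~\ref{cor3.3} only gives $\norm{{\bf p}(1-i\xi,x,y)}_{{\mathcal L}(\ell^2)}\le C e^{-|x-y|^2/(8(1+\xi^2))}$. This is of order one when $|x-y|\sim 2^\ell$. The condition $|\xi|\lesssim|x-y|$ gives no rapid decay; you need $|\xi|\ll|x-y|$. Your cut $|x-y|=2^\ell$ sits exactly at the propagation scale of $e^{-(1-i\xi)\mA}$ (already for ${\bf V}=0$ this kernel is a Gaussian of width $\sim|\xi|$). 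So there is no bound $C_N2^{-\ell N}$ on $\{|x-y|>2^\ell\}$. Running the computation of Lemma~\ref{lem6.1} on that region brings back the factor $2^{\ell(s+d)/2}\norm{\hat G_\ell}_{L^1}$, i.e.\ a loss of roughly $\frac{d+1}{2}$ derivatives, which is exactly what the lemma must avoid.

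You also have the roles of the frequencies reversed. For large $|x-y|$ the dangerous frequencies are the large $|\xi|$. What saves you is that $\hat G_\ell=\hat G\,\hat\eta_\ell$ vanishes for $|\xi|>2^\ell$, not for $|\xi|<2^{\ell-2}$. There is no large-$|\xi|$ tail to be controlled by smoothness of $G_\ell$.

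The missing step is to cut slightly beyond the propagation scale: take $R_\ell=2^{\ell(1+\delta)}$ with $0<\delta<2\varepsilon/s$.
\begin{itemize}
\item The near part then contributes $(1+R_\ell)^{s/2}\norm{G_\ell}_\infty\lesssim 2^{\ell(\delta s/2-\varepsilon)}\norm{G}_{W^{\frac s2+\varepsilon,\infty}}\norm{\psi}_{\ell^2}$, which is still summable.
\item For $|x-y|>R_\ell$ and $|\xi|\le 2^\ell$ one has $|x-y|^2/(1+\xi^2)\ge 4^{\ell\delta}/2$. Hence $\int_{|x-y|>R_\ell}\norm{{\bf p}(1-i\xi,x,y)\psi}_{\ell^2}^2(1+|x-y|)^s\,dx\le C_N 2^{-\ell N}\norm{\psi}_{\ell^2}^2$ uniformly in such $\xi$; the polynomial factors from volume and weight are absorbed by the Gaussian tail.
\item Combined with the crude bound $\norm{\hat G_\ell}_{L^1}\le 2^{(\ell+1)/2}\norm{\hat G_\ell}_{L^2}\lesssim 2^{\ell/2}\norm{G}_\infty$, the far parts are summable too.
\end{itemize}

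With this change your proof is complete. Compared with the paper's, it avoids both the Stein--Weiss interpolation and the interpolation of $L^\infty$-based Sobolev spaces, and it only needs the Besov norm $B^{s/2+\varepsilon}_{\infty,\infty}$ of $G$.

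One caveat, shared with the paper's Lemma~\ref{lem6.1}: $\norm{m(\sqrt{\cdot})e^{\cdot}}_{W^{\sigma,\infty}}\le C\norm{m}_{W^{\sigma,\infty}}$ is false near $\lambda=0$ because of the square root, e.g.\ for $\sigma=1$ and $m(\lambda)=\lambda$ near $0$. This is harmless in the proof of Theorem~\ref{thm6.1}, where the functions involved are supported in $[\frac14,1]$.
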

The proof is the same  as in \cite{DOS} or \cite{Ouh05}, Lemma~7.17. It starts with 
\begin{eqnarray*}
\norm{ m(\sqrt{\mA})(\cdot,y) \psi}_{L^2(\RR^d)} &=& \norm{ g(\mA){\bf p}(1,(\cdot,y) \psi}_{L^2(\RR^d)}\\
&\le& e \norm{m}_{L^\infty([0,1]} \norm{\psi}_{\ell^2}.
\end{eqnarray*}
Then we use Lemma~\ref{lem6.1} and the Stein-Weiss interpolation as in  \cite{DOS} or \cite{Ouh05}, Lemma~7.17. Note that Stein-Weiss interpolation holds in the vector-valued setting, see \cite{Triebel}, Theorem~1.18.5.

Following again  \cite{DOS} or \cite{Ouh05}, Lemma~7.18, we obtain by a simple scaling argument the following lemma. The potential does not play any role here. 

\begin{lemma}\label{lem6.3}
Let $s > 0$  and suppose that $m$ is supported  in $[0, r]$. Then for any $\varepsilon > 0$
\[
 \int_{\RR^d} \norm{m(\sqrt{\mA})(x,y) \psi}_{\ell^2}^2 (1 + r|x-y|)^s \, dx \le C_\varepsilon r^d \norm{\delta_r m }_{W^{\frac{s}{2} + \varepsilon, \infty}}^2 
\norm{\psi}_{\ell^2}^2
\]
for all $\psi \in \ell^2$. The constant $C_\epsilon$ is independent of $m$, $y$ and ${\bf V}$. Here $(\delta_r m)(\lambda) = m(r \lambda)$. 
\end{lemma}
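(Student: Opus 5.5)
We reduce Lemma~\ref{lem6.3} to Lemma~\ref{lem6.2} by rescaling the operator rather than the space. Put $\mA_r := r^{-2}\mA$, so that $m(\sqrt{\mA}) = (\delta_r m)(\sqrt{\mA_r})$, and $\delta_r m$ is supported in $[0,1]$. We cannot apply Lemma~\ref{lem6.2} to $\mA_r$ directly, since $\mA_r$ is not of the form $-{\bf \Delta} + {\bf V}'$. Instead we conjugate by the dilation $U_r {\bf f}(x) = r^{-d/2}{\bf f}(x/r)$, which is unitary on $L^2(\RR^d,\ell^2)$. A direct computation on the form gives
\[
U_r^{-1}\mA_r U_r = -{\bf \Delta} + {\bf V}_r, \qquad {\bf V}_r(x) := r^{-2}{\bf V}(x/r).
\]
The potential ${\bf V}_r$ is again weakly measurable, locally integrable, symmetric and nonnegative. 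By the functional calculus, $U_r^{-1} F(\mA_r) U_r = F(-{\bf \Delta}+{\bf V}_r)$ for bounded Borel $F$.

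On kernels this reads as follows. If $\mathcal B := -{\bf \Delta}+{\bf V}_r$, then
\[
m(\sqrt{\mA})(x,y) = (\delta_r m)(\sqrt{\mA_r})(x,y) = r^{d}\,(\delta_r m)(\sqrt{\mathcal B})(rx,ry).
\]
To check the normalisation, write $U_r F(\mathcal B) U_r^{-1}{\bf f}(x) = r^{-d/2}\int K(x/r,z)\, r^{d/2}{\bf f}(rz)\,dz$, where $K$ is the kernel of $F(\mathcal B)$, and substitute $y = rz$. One could equally define the scaling with $r$ replaced by $r^{-1}$; the only requirement is that the weight $(1+r|x-y|)^s$ becomes $(1+|x'-y'|)^s$.

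With the substitution $x' = rx$, $y' = ry$ we then get
\[
\int_{\RR^d} \norm{m(\sqrt{\mA})(x,y)\psi}_{\ell^2}^2 (1+r|x-y|)^s\,dx = r^{2d}r^{-d}\int_{\RR^d}\norm{(\delta_r m)(\sqrt{\mathcal B})(x',y')\psi}_{\ell^2}^2(1+|x'-y'|)^s\,dx'.
\]
Lemma~\ref{lem6.2}, applied to $\mathcal B$ and the function $\delta_r m$ supported in $[0,1]$, bounds the right-hand side by $C_\varepsilon r^d\norm{\delta_r m}_{W^{s/2+\varepsilon,\infty}}^2\norm{\psi}_{\ell^2}^2$. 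The constant $C_\varepsilon$ is the same as for $\mA$, because Lemma~\ref{lem6.2} gives a constant independent of the potential.

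The step to watch is this uniformity: the whole argument rests on the constants of Lemma~\ref{lem6.2} not depending on ${\bf V}$, and this holds because they come only from the Gaussian bounds of Corollaries~\ref{cor3.2} and~\ref{cor3.3}, which are ${\bf V}$-free. Two routine points also need checking. First, the rescaled form domain coincides with $D(\fa)$ under $U_r$. Second, the kernel identity holds in the strong sense, i.e.\ applied to each $\psi$, for a.e.\ $(x,y)$; this follows as in Corollary~\ref{cor3.2} by testing against a countable dense subset of $\ell^2$.
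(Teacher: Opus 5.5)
Your strategy is correct and is an explicit version of the ``simple scaling argument'' that the paper only alludes to. You dilate $\RR^d$ so that $r^{-2}\mA$ becomes the admissible Schr\"odinger operator $\mathcal B=-{\bf \Delta}+{\bf V}_r$ with ${\bf V}_r(x)=r^{-2}{\bf V}(x/r)$, and the ${\bf V}$-independence of the constant in Lemma~\ref{lem6.2} absorbs the change of potential. The kernel identity $m(\sqrt{\mA})(x,y)=r^d(\delta_r m)(\sqrt{\mathcal B})(rx,ry)$ and your final change of variables are also right.

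\textbf{The error.} The conjugation you display is backwards, so the kernel identity does not follow from what you wrote. With $U_r{\bf f}(x)=r^{-d/2}{\bf f}(x/r)$ one has $U_r^{-1}(-{\bf \Delta})U_r=r^{-2}(-{\bf \Delta})$ and $U_r^{-1}{\bf V}U_r={\bf V}(r\,\cdot)$. Hence
\[
U_r^{-1}\mA_rU_r=r^{-4}(-{\bf \Delta})+r^{-2}{\bf V}(r\,\cdot),
\]
and Lemma~\ref{lem6.2} does not apply to this operator. Your normalisation check has the same problem. It computes the kernel of $U_rF(\mathcal B)U_r^{-1}$, which is consistent with your displayed identity. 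But the substitution $y=rz$ in $\int K(x/r,z){\bf f}(rz)\,dz$ gives $r^{-d}K(x/r,y/r)$, not $r^dK(rx,ry)$. Inserted into the integral, this would give the prefactor $r^{-d}$ and the weight $(1+r^2|x'-y'|)^s$, not the claimed bound.

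\textbf{The fix.} Reverse the conjugation. Let ${\mathfrak a}_r$ denote the form of $\mathcal B$. A direct computation gives ${\mathfrak a}_r(U_r{\bf u},U_r{\bf v})=r^{-2}\fa({\bf u},{\bf v})$ with $U_rD(\fa)=D({\mathfrak a}_r)$. So $U_r\mA_rU_r^{-1}=\mathcal B$, and $F(\mA_r)=U_r^{-1}F(\mathcal B)U_r$, whose kernel is indeed $r^dK(rx,ry)$. Equivalently, you can check ${\bf p}_{\mA}(\tau,x,y)=r^d\,{\bf p}_{\mathcal B}(r^2\tau,rx,ry)$ directly from the heat equation.

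Contrary to your remark, the direction of the dilation is not a free choice. Normalising the Laplacian coefficient to $1$ and normalising the weight to $(1+|x'-y'|)^s$ both force $x'=rx$. With this correction your argument is complete.

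\textbf{An alternative route.} You can also avoid introducing ${\bf V}_r$ altogether. Rerun Lemmas~\ref{lem6.1}--\ref{lem6.2} with $g(\lambda)=(\delta_r m)(\sqrt\lambda)e^{\lambda}$, so that $m(\sqrt{\mA})=g(r^{-2}\mA)e^{-r^{-2}\mA}$, and use Corollary~\ref{cor3.3} at $z=(1-i\xi)r^{-2}$. This is closer to the paper's remark that the potential plays no role. Your route has the advantage of using Lemma~\ref{lem6.2} purely as a black box.
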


With these lemmas we can prove the previous theorem.
\begin{proof}[Proof of Theorem~\ref{thm6.1}]
Writing $m(\lambda) = m(\lambda) - m(0) + m(0)$ and the term  $m(0)$ gives $m(0) I_{\ell^2}$ we assume that $m(0) = 0$. Consider a function $\varphi \in C_c^\infty(0, \infty)$ with support contained in $[\frac{1}{4}, 1]$ and such that
$\sum_{n \in \ZZ} \varphi(2^{-n} \lambda) = 1$ for all $ \lambda > 0$. Then
\[
m(\lambda) = \sum_{n \in \ZZ} m(\lambda) \varphi(2^{-n} \lambda) = \sum_{n \in \ZZ} m_n(\lambda)
\]
with $m_n(\lambda) = m(\lambda) \varphi(2^{-n} \lambda)$.  By classical approximation arguments, it is enough to prove that $\sum_{n=-N}^N m_n(\sqrt{\mA})$ is of weak type $(1,1)$ with a bound independent of $N$ (and again of ${\bf V}$). In order to do so, we apply Proposition~\ref{prop:prop2-1}. We prove that for all $\psi \in \ell^2$
\begin{equation}\label{eq6.1}
\int_{| x-y | \ge r} \norm{\sum_{n=-N}^N 
\left( m_n(\sqrt{\mA})(x,y) - e^{-r^2\mA} m_n(\sqrt{\mA})\right) \psi}_{\ell^2}\, dx \le W_1 \norm{\psi}_{\ell^2}
\end{equation}
for some constant $W_1$ independent of $r > 0$, $y \in \RR^d$, and ${\bf V}$.

Set $g_n(\lambda) = m_n(\lambda) (1- e^{-r^2\lambda^2})$ and
\[
I_n(\psi) = \int_{| x-y | \ge r} \norm{g_n(\sqrt{\mA})(x,y) \psi}_{\ell^2}\, dx.
\]
We start with the obvious inequality
\begin{eqnarray*}
I_n(\psi) \le &&\left( \int_{\RR^d}  \norm{g_n(\sqrt{\mA})(x,y) \psi}_{\ell^2}^2(1+ 2^n |x-y|)^{2s}\, dx\right)^\einhalb\\
&& \times  \left(\int_{|x-y| \ge r} (1 + 2^n|x-y|)^{-2s}\, dx \right)^\einhalb.
\end{eqnarray*}
By Lemma~\ref{lem6.3} we have exactly as in \cite{DOS} or \cite{Ouh05}
\begin{eqnarray*}
\int_{\RR^d} \norm{g_n(\sqrt{\mA})(x,y) \psi}_{\ell^2}^2(1+ 2^n |x-y|)^{2s}\, dx
&\le& C_\varepsilon 2^{nd} \norm{\delta_{2^n}g_n}_{W^{s + \varepsilon, \infty}}^2 \norm{\psi}_{\ell^2}^2\\
\vspace{-2cm}&\le& C'_\varepsilon \sup_{t > 0} \norm{\varphi(\cdot) m(t\cdot)}_{W^{s + \varepsilon, \infty}}^2 \norm{\psi}_{\ell^2}^2
\min(1, r2^n) \norm{\psi}_{\ell^2}.
\end{eqnarray*}
On the other hand
\[
\int_{|x-y| \ge r} (1 + 2^n|x-y|)^{-2s}\, dx \le C' 2^{-nd} \min(1, (2^n r)^{d-2s}).
\]
Putting these estimates together yields for $s > \frac{d}{2}$,
\[
\sum_{n =-N}^N I_n(\psi) \le C''_\varepsilon \sup_{t > 0} \norm{\varphi(\cdot) m(t\cdot)}_{W^{s + \varepsilon, \infty}}\norm{\psi}_{\ell^2}
\]
which gives \eqref{eq6.1}. 

\end{proof} 

Now we prove Theorem~\ref{thm1.2}.
\begin{proof}[Proof of Theorem~\ref{thm1.2}] Suppose the function $m$ satisfies the assumption in the theorem. The first two assertions follow directly from Theorem~\ref{thm6.1} in which we take  ${\bf V}$ to be the diagonal operator  whose entries are $V_k$.

To see the third assertion, observe that  for a given nonnegative potential $V$ on $\RR^d$, the $L^p$-norm of $m(-\Delta +V)$ can be estimated by a constant independent of $V$. This  is a special case of Theorem~\ref{thm6.1}. Therefore, for the diagonal potential ${\bf V}$, $m(\mA): L^p(\RR^d, \ell^p) \to L^p(\RR^d, \ell^p)$ is bounded by a constant independent of $(V_k)$. By interpolation with the $L^p(\RR^d, \ell^2)$ estimate given in Theorem~\ref{thm6.1} we obtain the assertion. 
\end{proof}

\begin{proof}[Proof of Corollary~\ref{cor1.3}] We consider again the case where ${\bf V}$ is the diagonal operator with entries $V_k$.  We apply Theorem~\ref{thm6.1} and \cite{DK2}, Corollary~3.5. In the language of \cite{DK2}, $\mA$ admits a  
$W^{s, \infty}$ functional calculus for $s > \frac{d}{2}$. The Sobolev embedding 
$W^{s+ \frac{1}{2} + \varepsilon, 2}$ into $W^{s, \infty}$ shows that $\mA$ has  $W^{\beta, 2}$ functional calculus for $\beta > \frac{d}{2} + \frac{1}{2}$. Then we apply \cite{DK2}, Corollary~3.5 and notice that $\max(\frac{1}{2}, \frac{1}{{\rm type}(L^p(\RR^d, \ell^2))} -
\frac{1}{{\rm cotype}(L^p(\RR^d, \ell^2))}) = \frac{1}{2}$. The implicit constant in \cite{DK2}, Corollary~3.5, is independent of $(V_k)$ since  its dependence on $\mA$ is  mainly through   ${\mathcal R}$-sectoriality estimates.  \end{proof}

%%%%%%%%%%%%
\section{Appendix} \label{App}
 This appendix is dedicated to the Dunford-Pettis representation theorem for vector-valued operators. This is probably  known to experts but we provide the reader with its  statement and proof. 
 \begin{theorem}\label{thm-DP}
 Let $(X, \mu)$ be a $\sigma$-finite measure space and $H$ a separable Hilbert space. Suppose that $T$ is a bounded linear operator from $L^1(X, H)$ into $L^\infty(X,H)$. Then there exists $K: X \times X \to {\mathcal L}(H)$  strongly measurable such that for every ${\bf f} \in L^1(X, H)$
 \[ 
 T{\bf f}(x) = \int_X K(x,y) {\bf f}(y)\, d\mu(y) \quad {\rm a.e.} \, x \in X.
 \]
 In addition, $K$ is bounded and $\operatorname*{ess-sup}_{(x,y) \in X\times X}\norm{K(x,y)}_{{\mathcal L}(H)} = \norm{T}_{L^1(X,H)\to L^\infty(X,H)}$. The kernel $K$ is unique (up to a null set of $X \times X$). 
 \end{theorem}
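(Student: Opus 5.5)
The plan is to reduce the statement to the scalar Dunford--Pettis theorem by testing against a countable dense set of $H$. Fix an orthonormal basis $(e_k)_k$ of $H$; separability of $H$ makes it countable.

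For each pair $k,l$ define the scalar operator $T_{kl}: L^1(X)\to L^\infty(X)$ by
\[
T_{kl} g(x) = \langle T(g e_l)(x), e_k\rangle_H .
\]
Then $\norm{T_{kl}}_{1\to\infty} \le \norm{T}$. The classical Dunford--Pettis theorem, valid on $\sigma$-finite spaces, gives kernels $k_{kl}\in L^\infty(X\times X)$ with $\norm{k_{kl}}_\infty \le \norm{T}$ and
\[
T_{kl} g(x) = \int_X k_{kl}(x,y) g(y)\,d\mu(y).
\]
For finite sequences $a=(a_l)$ and $b=(b_k)$ in a countable dense subset of $\ell^2$ with rational coordinates, apply the same argument to $g\mapsto \langle T(g \sum_l a_l e_l), \sum_k b_k e_k\rangle$. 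Uniqueness of scalar kernels shows that its kernel equals $\sum_{k,l} \bar b_k a_l k_{kl}$ almost everywhere, and that this has modulus at most $\norm{T}\,\norm{a}\,\norm{b}$ almost everywhere.

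Next, discard the countable union of exceptional null sets. Outside this set $N$, the matrix $(k_{kl}(x,y))$ defines a bilinear form bounded by $\norm{T}$ on a dense set of finitely supported vectors. It therefore extends to an operator $K(x,y)\in\mathcal{L}(H)$ with $\norm{K(x,y)} \le \norm{T}$; on $N$ set $K=0$. Weak measurability of $(x,y)\mapsto \langle K(x,y)u,v\rangle$ follows by taking limits of finite sums. Pettis' theorem, using separability, then gives strong measurability of $(x,y)\mapsto K(x,y)u$.

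To verify the representation, first take simple functions ${\bf f}=\sum_l g_l e_l$ with finitely many terms. Both sides agree when tested against each $e_k$, hence agree almost everywhere. Density of such functions in $L^1(X,H)$, together with the boundedness of both sides from $L^1(X,H)$ to $L^\infty(X,H)$, extends the identity to all ${\bf f}$. The right side is bounded because $\norm{K}\le\norm{T}$, and a.e. convergence is obtained along a subsequence.

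The norm identity follows. The inequality $\le$ was shown above, and $\ge$ is immediate from $\norm{T{\bf f}}_\infty\le \operatorname*{ess-sup}\norm{K}\,\norm{{\bf f}}_1$. Uniqueness reduces to the scalar uniqueness of each $\langle K e_l,e_k\rangle$.

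The main obstacle I expect is the passage from the a.e. matrix bounds to a genuine operator norm bound on a single full-measure set, which is where the countable dense family is essential. For that step I would follow the same argument used in the proof of Corollary~\ref{cor3.2}.
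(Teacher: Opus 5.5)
Your proposal is correct and follows essentially the same route as the paper. Both apply the scalar Dunford--Pettis theorem to the entries $\langle T(g e_l), e_k\rangle_H$, bound the finite combinations $\sum \bar b_k a_l k_{kl}$ a.e.\ to get $\norm{K(x,y)}_{\mathcal L(H)} \le \norm{T}$, use Pettis' theorem for strong measurability, and extend the representation by density of finitely spanned functions. Your explicit restriction to finitely supported vectors with (complex) rational coefficients, so that only countably many null sets are discarded, makes precise the ``by density'' step that the paper leaves implicit.
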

\begin{proof}
Take an orthonormal basis $(e_k)$ of $H$ and define the operator
\[
T_{ij} g = \langle T(e_i g), e_j \rangle_H, \quad {\rm for}\ g \in L^1(X).
\]
Clearly, 
$$ \norm{T_{ij}g}_{L^\infty(X)} \le \norm{T(e_ig)}_{L^\infty(X,H)} \le \norm{T}_{L^1(X,H)\to L^\infty(X,H)} \norm{g}_{L^1(X)}.$$
By the (scalar) Dunford-Pettis theorem, there exists a measurable bounded function $k_{ij}$ on $X \times X$ such that for every $g \in L^1(X)$
\begin{equation}\label{App1}
 T_{ij} g(x) = \int_X  k_{ij}(x,y) g(y)\, d\mu(y) \quad {\rm a.e.}\ x \in X.
 \end{equation}
 Taking the union over $(i,j)$ of null sets, this equality holds for a.e. $x \in X$ independently of $i, j$. Note also that 
 \begin{equation}\label{App2}
 \norm{k_{ij}}_{L^\infty(X\times X)} = \norm{T_{ij}}_{L^1(X)\to L^\infty(X)} \le \norm{T}_{L^1(X,H)\to L^\infty(X,H)}.
 \end{equation}
We can view $H$ as $\ell^2$ and define $K(x,y)$ as the (infinite) matrix whose entries are $k_{ij}(x,y)$. In order to see that for $(x,y)$ outside a null set $K(x,y)$ defines a bounded operator on $H$ we proceed as follows. Take  $\varphi = \sum_{i=1}^N \alpha_i e_i$ and $\varphi = \sum_{i=1}^M \beta_i e_i$. Then 
\begin{equation}\label{App2.2}
\langle K(x,y) \varphi, \psi\rangle_H = \sum_{i,j} \alpha_i \overline{\beta}_j k_{ij}(x,y)
\end{equation}
is the kernel of the operator $T_{\varphi, \psi}: L^1(X) \to L^\infty(X)$,
\[
T_{\varphi, \psi}g(x) = \langle T(\varphi g)(x),  \psi\rangle_H.
\]
Hence, by \eqref{App2} and again the scalar Dunford-Pettis theorem
\[
| \sum_{i,j} \alpha_i \overline{\beta}_j k_{ij}(x,y) | \le \norm{T}_{L^1(X,H)\to L^\infty(X,H)}\norm{\varphi}_H \norm{\psi}_H.
\]
By density and \eqref{App2.2} this implies that
\[
\norm{K(x,y) \varphi}_H \le \norm{T}_{L^1(X,H)\to L^\infty(X,H)} \norm{\varphi}_H
\]
for all $\varphi$ as above. By density again, $K(x,y)$ is a bounded operator on $H$ with norm
\begin{equation}\label{App2.5}
\norm{K(x,y)}_{{\mathcal L}(H)} \le \norm{T}_{L^1(X,H)\to L^\infty(X,H)} \quad {\rm a.e.} \ (x,y) \in X \times X.
\end{equation}
Note that for  $\varphi = \sum_{i=1}^N \alpha_i e_i$ and
$\varphi = \sum_{i=1}^M \beta_i e_i$, the function 
$$(x,y) \mapsto \langle K(x,y) \varphi, \psi\rangle_H$$
is measurable (as a finite linear combination of measurable functions) and by \eqref{App2.5} and approximation,
this function is measurable for all $\varphi, \psi \in H$. That is, $K(\cdot, \cdot)$ is weakly measurable.  Since $H$ is separable, 
$K(\cdot, \cdot)$ is then strongly measurable by Pettis' theorem. 

Next, for ${\bf f} = \sum_{i=1}^N f_i e_i$ with $f_i \in L^1(X)$ we have
\begin{eqnarray*} 
\langle T{\bf f}(x), e_j \rangle_H &=& \sum_i \langle T{ f_i e_i}(x), e_j \rangle_H
= \sum_i \int_X k_{ij}(x,y) f_i(y) \, d\mu(y)\\
&=& \int_X \langle K(x,y) {\bf f} (x), e_j\rangle_H \, d\mu(y).
\end{eqnarray*}
Hence
\[
 T{\bf f}(x) = \int_X K(x,y) {\bf f}(y)\, d\mu(y) 
 \]
for a.e. $x \in X$. By density and \eqref{App2.5}, this extends to all ${\bf f} \in L^1(X, H)$. The above representation gives
\[ 
|T{\bf f}(x)| \le  \operatorname*{ess-sup}_{(x,y) \in X\times X}\norm{K(x,y)}_{{\mathcal L}(H)} \norm{{\bf f}}_{L^1(X, H)}
\]
and hence the equality 
\[
\operatorname*{ess-sup}_{(x,y) \in X\times X}\norm{K(x,y)}_{{\mathcal L}(H)} = \norm{T}_{L^1(X,H)\to L^\infty(X,H)}
\]
is proved. 

The uniqueness of the kernel can be proved as follows. Suppose that for all  ${\bf f}, {\bf g} \in L^1(X,H)$,  
\[ 
\int_X \int_X \langle K(x,y) {\bf f} (y), {\bf g}(x) \rangle_H\, d\mu(y) d\mu(x) = 0.
\]
Applying this to   ${\bf f} = f e_i$ and ${\bf g} = g e_j$, yields 
\[ 
\int_X \int_X f(y) \overline{g(x)} k_{ij}(x,y) d\mu(y) d\mu(x) = 0 \quad {\rm for\  all}\  f, g \in L^1(X).
\]
This gives $k_{ij} (x,y) = 0$ for a.e. $(x,y)$. Taking again the union over $i, j$ of null sets we obtain $K(x,y) = 0$ for a.e. $(x,y) \in X \times X$.
\end{proof}

%%%%%% Litterature via bibtex
%\bibliographystyle{mybib}
%\bibliography{extrapol}

\providecommand{\bysame}{\leavevmode\hbox to3em{\hrulefill}\thinspace}

\end{document}